\DeclareFontFamily{U}{dutchcal}{\skewchar\font=45 }
\DeclareFontShape{U}{dutchcal}{m}{n}{<-> s*[1.0] dutchcal-r}{}
\DeclareFontShape{U}{dutchcal}{b}{n}{<-> s*[1.0] dutchcal-b}{}
\DeclareMathAlphabet{\mathlcal}{U}{dutchcal}{m}{n}
\SetMathAlphabet{\mathlcal}{bold}{U}{dutchcal}{b}{n}
\DeclareMathOperator{\sheafhom}{\mathscr{H}\text{\kern -3pt {\calligra\large om}}\,}
\DeclareMathOperator{\sheafend}{\mathscr{E}\text{\kern -3pt {\calligra\large nd}}\,}
\theoremstyle{plain}
\newtheorem{thmx}{Theorem}
\newtheorem{conjx}[thmx]{Conjecture}
\theoremstyle{definition}
\newtheorem{definition}[subsubsection]{Definition}
\newtheorem{definition-proposition}[subsubsection]{Definition-Proposition}
\newtheorem{theorem}[subsubsection]{Theorem}
\newtheorem{example}[subsubsection]{Example}
\newtheorem{remark}[subsubsection]{Remark}
\theoremstyle{theorem}
\newtheorem{proposition}[subsubsection]{Proposition}
\newtheorem*{proposition*}{Proposition}
\newtheorem{lemma}[subsubsection]{Lemma}
\newtheorem{conjecture}[subsubsection]{Conjecture}
\newtheorem*{quest}{Question}
\newtheorem*{theorem*}{Theorem}
\newcommand{\namelabel}[1]{%
  \phantomsection
  \renewcommand{\@currentlabel}{#1}
  \label{#1}
}
\newcommand{\thistheoremname}{}
\newtheorem*{genericthm*}{\thistheoremname}
\newenvironment{namedthm*}[1]{\renewcommand{\thistheoremname}{#1}%
	\begin{genericthm*}}
	{\end{genericthm*}}
\theoremstyle{remark}
\numberwithin{equation}{section}
\def\At{\mathrm{At(P)}}
\newcommand{\p}{\partial}
\newcommand{\om}{\omega}
\newcommand{\cc}{\frac{1}{2}}
\newcommand{\bg}{g^{\star_{h_0}}}
\newcommand{\Mdol}{\mathcal M_{\mathrm{Dol}}(\mathcal X/S)}
\newcommand{\Ch}{D_{h_0}}
\newcommand{\Ehol}{\mathcal A^{0,0}(\mathrm{End}E\otimes\Omega^1_{X_0})}
\newcommand{\Eahol}{\mathcal A^{0,1}(\mathrm{End}E)}
\newcommand{\Hy}{\mathbb H^1(X_0,(\mathrm{End}E,\mathrm{ad}(\theta)))}
\newcommand{\Dol}{\mathrm{Dol}}
\newcommand{\End}{\mathrm{End}}
\newcommand{\SL}{\mathrm{SL}(n,\mathbb C)}
\newcommand{\tad}{\mathrm{ad}(\theta)}
\begin{document}
\title[Kodaira-Spencer map on Hitchin-Simpson correspondence]{Kodaira-Spencer Map on the Hitchin-Simpson Correspondence}

\author[Tianzhi Hu]{Tianzhi Hu}
\address{ School of Mathematics and Statistics, Wuhan University, Luojiashan, Wuchang, Wuhan, Hubei, 430072, P.R. China}
\email{hutianzhi@whu.edu.cn}


\author{Ruiran Sun}
 \address{School of Mathematical Sciences, Xiamen University, Xiamen 361005, China}
\email{ruiransun@xmu.edu.cn}

\author[Kang Zuo]{Kang Zuo}
\address{ School of Mathematics and Statistics, Wuhan University, Luojiashan, Wuchang, Wuhan, Hubei, 430072, P.R. China; Institut f\"ur Mathematik, Universit\"at Mainz, Mainz, Germany, 55099}
\email{zuok@uni-mainz.de}
\begin{abstract}
We define the isomonodromic deformation of a Higgs bundle over a compact Riemann surface via the Hitchin-Simpson correspondence and the isomonodromic deformation of a local system. This deformation defines a real analytic section of the relative Dolbeault moduli space, yielding a real analytic foliation on this moduli. This foliation generalizes the Betti foliation defined by the Betti map in the study of abelian schemes. We provide a precise form for the holomorphic and anti-holomorphic derivatives of the isomonodromic deformation of a Higgs bundle. Subsequently, we extend the classical non-abelian Kodaira-Spencer map using the anti-holomorphic derivative. Additionally, we prove that if the isomonodromic deformation of a graded Higgs bundle is not holomorphic, then the isomonodromically deformed Higgs field is non-nilpotent.
\end{abstract}

\subjclass[2010]{14D22,14C30}
\keywords{}

\maketitle

\setcounter{tocdepth}{1}
\tableofcontents

\section{Introduction}
\subsection{Hitchin-Simpson correspondence}
Let $X$ be a compact K\"ahler manifold. The celebrated Hitchin-Simpson correspondence, primarily developed by Corlette \cite{Corl}, Donaldson \cite{Don}, Hitchin \cite{Hit}, and Simpson \cite{Simp92}, establishes a real analytic homeomorphism between the moduli space of semisimple $\mathbb C$-local systems and the moduli space of polystable Higgs bundles with vanishing Chern classes. This correspondence is denoted by
$$\Psi:\mathcal M_{\mathrm{B}}(X)\stackrel{\sim}{\longrightarrow} \mathcal M_{\mathrm{Dol}}(X),$$
where we consider $\SL$ local systems and Higgs bundles unless otherwise stated. 

This correspondence is \textbf{real analytic} because its construction involves solving a highly non-linear second-order elliptic PDE for both directions. 

Furthermore, the classical Riemann-Hilbert correspondence over $X$ provides a biholomorphic map $\mathcal M_{\mathrm{B}}(X)\simeq\mathcal M_{\mathrm{DR}}(X)$, where the latter is the moduli space of semisimple $\SL$ flat bundles over $X$.

\subsection{Non-abelian Hodge theory for a projective family}\label{nabHT}
For a smooth projective family $f:\mathcal X\to S$ over a quasi-projective base, with a central fiber $X_0$ over $0\in S$, Griffiths \cite{Gri69, Gri70} developed the theory of variation of Hodge structures, which is crucial in deformation theory and Hodge theory (see \cite{CMP}). 

Inspired by Deligne, Simpson \cite{Simp95} introduced a non-abelian analogue of classical variation of Hodge structures. Deligne's twistor space is defined as 
$$\mathcal P:\mathcal M_{\mathrm{Del}}(\mathcal X/S)\to S\times\mathbb P^1,$$ 
whose fiber at $S\times \{1\}$ is $\mathcal M_{\mathrm{DR}}(\mathcal X/S)$ and at $S\times \{0\}$ is $\Mdol$. Both relative moduli spaces are generally not smooth, even if $\mathcal X/S$ is a smooth family. In this paper, all tangent bundles of these relative moduli spaces are defined only at their smooth points.

In \cite{Simp95}, the \textbf{non-abelian Gauss-Manin connection} $\nabla_{GM}$ is defined via the isomonodromic deformation on $\mathcal M_{\mathrm{Del}}(\mathcal X/S)|_{S\times\mathbb G_m}$, and the \textbf{non-abelian Hodge filtration} is the $\mathbb G_m$-action on $\mathcal M_{\mathrm{Del}}(\mathcal X/S)|_{S\times\mathbb G_m}$. Consequently, $\Mdol$ can be interpreted as the '\textbf{non-abelian Hodge bundle}' of $\mathcal M_{\mathrm{DR}}(\mathcal X/S)$ due to the Rees module construction. 

Let $\mathcal M^{\mathrm{gr}}_{\mathrm{Dol}}(\mathcal X/S)\subset\Mdol$ denote the moduli of graded Higgs bundles. In \cite[Lemma 4.1]{Simp10}, Simpson proved that for any $(E,D)\in \mathcal M_{\mathrm{Del}}(X_0)|_{\mathbb A^1}$, 
$$\lim\limits_{t\in\mathbb G_m,\ t\to 0}(E,t\cdot D)\text{ exists and is contained in } \mathcal M^{\mathrm{gr}}_{\mathrm{Dol}}(X_0).$$ 

Let $p_0:\mathcal M^{\mathrm{gr}}_{\mathrm{Dol}}(\mathcal X/S)\to S$ be the projection and $\rho_{KS}$ be the Kodaira-Spencer map of $\mathcal X/S$. 

As proposed in \cite{Simp95}, and by taking the residue at $S\times \{0\}$ of the non-abelian Gauss-Manin connection (as in \cite{CT} and \cite{FS}), we obtain the following \textbf{non-abelian Kodaira-Spencer map} (or non-abelian Higgs field): 
\begin{equation}\label{Theta_ks}\begin{aligned}\Theta_{KS}:p_0^\ast TS&\to T({\Mdol/S})\\ \{(E,\bar\p,\theta),\frac{\p}{\p t}\}&\mapsto \theta_\ast\circ\rho_{KS}(\frac{\p}{\p t}),
\end{aligned}\end{equation}
where $(E,\bar\p,\theta)\in\mathcal M_{\mathrm{Dol}}^{\mathrm{gr}}(X_s)$, $\frac{\p}{\p t}\in T_sS,$ and 
\begin{align*}\theta_\ast:H^1(X_s,T_{X_s})\to\mathbb H^1(X_s,(\mathrm{End}E,\mathrm{ad}(\theta)))=T_{(E,\bar\p,\theta)}\mathcal M_{\mathrm{Dol}}(X_s)
\end{align*}
is the induced map from the morphism of two complexes (each column is a complex)
\[
\begin{tikzcd}
0 \arrow[r, ]                                                   & \mathrm{End} E \otimes \Omega_{X_s}^1 \\
T_{X_s} \arrow[r, "\theta"] \arrow[u, ] & \mathrm{End} E \arrow[u, "{\tad}"]              
\end{tikzcd}
\]
We remark that $\theta_\ast$ is defined for any Higgs bundle, not necessarily graded.

Let $p:\Mdol\to S$. We offer a different approach to derive and naturally extend the non-abelian Kodaira-Spencer map $\Theta_{KS}$. 

By performing the \textbf{isomonodromic deformation of a Higgs bundle} (explained in section \ref{1-3}), we obtain a real analytic section $\sigma:S\to\Mdol$. Its anti-holomorphic derivative at $\sigma(s)$ along $\p/\p t\in T_s S$ is given by
\begin{equation}\label{Phi_ks}\begin{aligned}\Phi^{0,1}_{KS}:p^\ast TS&\to T^{1,0}({\Mdol/S})\xrightarrow{\text{conjugate}} T^{0,1}({\Mdol/S})\\ \{(E,\bar\p,\theta),\frac{\p}{\p t}\}&\mapsto \theta_\ast\circ\rho_{KS}(\frac{\p}{\p t}) \xrightarrow{\text{conjugate}} \overline{\theta_\ast\circ\rho_{KS}(\frac{\p}{\p t})},
\end{aligned}\end{equation}

Note that $\Theta_{KS}$ in \eqref{Theta_ks} is defined only for a graded Higgs bundle, while $\Phi^{0,1}_{KS}$ is defined for any Higgs bundle. Therefore, $\Phi_{KS}^{0,1}$ \textbf{extends} $\Theta_{KS}$ to any Higgs bundle in $\Mdol$.

\subsection{Isomonodromic deformation and the first main result}\label{1-3}
As our focus is on the local properties of isomonodromic deformation, we assume $S$ is a germ of some variety. An isomonodromic section $\tau:S\to\mathcal M_{\mathrm{DR}}(\mathcal X/S)\cong\mathcal M_{\mathrm{B}}(\mathcal X/S)$ represents an isomonodromic deformation for a given flat vector bundle (or a local system). 

The Hitchin-Simpson correspondence $\Psi$ provides the \textbf{isomonodromic deformation section} of Higgs bundle, defined by:
$$\sigma:=\Psi\circ\tau:S\to \Mdol.$$

We refer to this $\sigma$ as an \textbf{isomonodromic deformation} of a Higgs bundle $\sigma(0)=(E,\bar\p,\theta)\in\mathcal M_{\mathrm{Dol}}(X_0)$. This section $\sigma$ is a real analytic section, rather than holomorphic, because $\Psi$ is real analytic. All isomonodromic sections of $\Mdol$ define a real analytic foliation on $\Mdol$, called the \textbf{isomonodromic foliation} on $\Mdol$.

The Betti map of an abelian scheme, introduced in \cite[Section 2.1]{CMZ}, is a valuable tool in studying Diophantine problems, including the distribution of torsion values in an abelian scheme \cite{CMZ, ACZ} and the geometric Bogomolov conjecture \cite{CGHX}. We observe that (in Section \ref{Bettimap}):
\begin{itemize}
\item The Betti map of an abelian scheme $\mathcal A\to S$ is equivalent to a real analytic map 
$$\mathcal B:\mathcal A\to \mathcal M_{\mathrm B}(\mathcal A/S,\mathrm U(1))\cong S\times \text{a fixed real torus }.$$
defined in \eqref{Bet} by the unitary monodromy representation.
\item When considering the relative Jacobian $\mathrm{Jac}(\mathcal X/S)$ of a family of compact Riemann surfaces $\mathcal X/S$, the Betti map is a special case of the Hitchin-Simpson correspondence:
\[\begin{tikzcd}\mathrm{Jac}(\mathcal X/S)\arrow[r,"\mathcal B"]\arrow[d,equals]&\mathcal M_{\mathrm B}(\mathrm{Jac}(\mathcal X/S)/S,\mathrm U(1))\arrow[d,equals]\\
\mathcal M_{\mathrm {Dol}}(\mathcal X/S,\mathrm U(1))\arrow[r]&\mathcal M_{\mathrm B}(\mathcal X/S,\mathrm U(1))\end{tikzcd}\]
The Betti foliation on $\mathrm{Jac}(\mathcal X/S)$, defined by the level sets of the Betti map, is precisely the isomonodromic foliation on $\mathcal M_{\mathrm {Dol}}(\mathcal X/S,\mathrm U(1))$.
\end{itemize}

By \cite[Proposition 2.1]{CMZ} and \cite[Section 2.1]{CGHX}, any level set of a Betti map is a holomorphic section of the abelian scheme. However, an isomonodromic deformation $\sigma:S\to\Mdol$ is not always holomorphic.
\begin{quest} When is an isomonodromic deformation $\sigma:S\to\Mdol$ holomorphic?\end{quest}
\begin{flushleft}\textbf{Setup:} We focus on the smooth projective family of compact Riemann surfaces $\mathcal X/S$. Even if, the main ingredient in our paper, i.e. using the deformation of the harmonic metric to study the deformation of the Higgs bundle, should work for any higher dimensional family.\end{flushleft}

After reviewing the deformation theory of a Higgs bundle in $\Mdol$ using the theory of the Atiyah bundle (developed in \cite{Ati}) and studying the first-order deformation of the harmonic metric, we arrive at the following explicit representation for the tangent map of $\sigma$, which answers the question regarding holomorphicity.  
\begin{thmx}[=Theorem~\ref{main1}]\label{thm-A}
(i) The \textbf{holomorphic derivative} $\Pi^{1,0}\sigma_\ast(\frac{\p}{\p t})$ of the isomonodromic section $\sigma:S\to\Mdol$ is given by
\begin{align*}\Phi^{1,0}_{KS}:T_0S&\to T^{1,0}_{\sigma(0)}\Mdol\\
\frac{\p}{\p t}&\mapsto [(-\cc\Ch^{1,0}g,-\cc[\theta^{\star_{h_0}},g],\eta)],\end{align*}
where this triple is a deformation class defined in Proposition \ref{holtan}. $[\eta]:=\rho_{KS}(\p/\p t)$ is the Kodaira-Spencer class. $g$ is a smooth endomorphism of $E$ satisfying the PDE \eqref{equ_g} in Proposition \ref{thetadbar} such that $g$ is uniquely determined by $\frac{\p}{\p t}$ and the initial Higgs bundle $\sigma(0)\in\Mdol$;\\
(ii) The \textbf{anti-holomorphic derivative} $\Pi^{0,1}\sigma_\ast(\frac{\p}{\p t})$ for the isomonodromic section $\sigma:S\to\Mdol$ is 
\begin{align*}\Phi^{0,1}_{KS}:T_0S\stackrel{\rho_{KS}}{\longrightarrow}H^1(T_{X_0})\stackrel{\theta_\ast}{\longrightarrow}T_{\sigma(0)}^{1,0}\mathcal M_{\mathrm{Dol}}(X_0)\stackrel{\text{conjugate}}{\longrightarrow}T_{\sigma(0)}^{0,1}\mathcal M_{\mathrm{Dol}}(X_0)\hookrightarrow T_{\sigma(0)}^{0,1}\Mdol.
\end{align*}
\end{thmx}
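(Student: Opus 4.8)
The plan is to realize the isomonodromic deformation as the restriction to the fibres of a single flat connection on the total space, and then to differentiate the Hitchin--Simpson decomposition of this fixed connection, keeping track of the two independent sources of variation: the change of complex structure of the fibre (the Kodaira--Spencer direction) and the change of the harmonic metric. Concretely, I would extend the flat bundle $(V_0,D_0)$ on $X_0$ to a flat connection $\mathcal D$ on a bundle over a neighbourhood of $X_0$ in $\mathcal X$ whose restriction to each fibre $X_s$ is the isomonodromic flat bundle $(V_s,D_s)=\tau(s)$; flatness of $\mathcal D$ on the total space is precisely the statement that the fibrewise monodromy is constant. Trivialising $\mathcal X\to S$ smoothly near $X_0$ then presents the data as a single flat connection $D$ on a fixed $C^\infty$ bundle over the fixed underlying smooth surface, together with a varying complex structure $J_s$ (with $\dot J\leftrightarrow\eta=\rho_{KS}(\partial/\partial t)$) and a varying harmonic metric $h_s$ solving Hitchin's equations for $(D,J_s)$. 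The Higgs data are recovered by the Simpson decomposition $D=\bar\partial_E+\partial_h+\theta+\theta^{\star_{h}}$, where the metric part is $h_s$-unitary/self-adjoint and the $(1,0)/(0,1)$ split is taken with respect to $J_s$.

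Next I would differentiate this decomposition at $s=0$. Because $D$ itself is held fixed, the variation of $(\bar\partial_E,\theta)$ comes entirely from differentiating the two splittings. The change of $J_s$ re-decomposes the $(1,0)$-form $\theta$ and the metric connection, and the contraction of $\theta\in\mathcal A^{1,0}(\End E)$ against $\dot J=\eta\in\mathcal A^{0,1}(T_{X_0})$ produces exactly the class $\theta_\ast\rho_{KS}(\partial/\partial t)$; the change of $h_s$ enters through the variation $\dot h\leftrightarrow g$ of the Chern connection and of the $h_0$-adjoints, contributing the terms $-\cc\,\Ch^{1,0}g$ and $-\cc[\theta^{\star_{h_0}},g]$. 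Organising these into the deformation triple of Proposition \ref{holtan} gives the holomorphic class $[(-\cc\,\Ch^{1,0}g,-\cc[\theta^{\star_{h_0}},g],\eta)]$ asserted in (i), while the complex-structure contribution, once projected onto the anti-holomorphic summand of the complexified tangent space, assembles into $\overline{\theta_\ast\rho_{KS}(\partial/\partial t)}$, giving (ii).

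The metric variation $g$ is not free: preserving the harmonic (pluriharmonic) condition to first order forces $g$ to solve the linearised Hitchin equation, which is the second-order elliptic PDE \eqref{equ_g} of Proposition \ref{thetadbar}, with the Kodaira--Spencer term as inhomogeneity. Ellipticity together with the harmonic-theoretic setup established earlier yields a unique such $g$ depending only on $\partial/\partial t$ and the initial Higgs bundle $\sigma(0)$, as required, and feeding this $g$ back into the triple of (i) completes the identification of the holomorphic derivative.

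I expect the main obstacle to be twofold. First, deriving the linearised equation \eqref{equ_g} for $g$ and proving its unique solvability, that is, showing that the isomonodromic constraint translates exactly into this PDE and that the gauge freedom in the choice of harmonic metric is rigidly fixed. Second, and most delicately, carrying out the projection onto the $(1,0)$ and $(0,1)$ summands of the complexified tangent space so that the Kodaira--Spencer contribution appears conjugated and entirely in the anti-holomorphic part while the metric variation builds the holomorphic part; this requires careful bookkeeping of the $h_0$-adjoint operations and of how a holomorphic direction $\partial/\partial t$ on $S$ distributes across $T^{1,0}$ and $T^{0,1}$ under the real-analytic correspondence $\Psi$.
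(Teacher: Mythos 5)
Your proposal is correct and follows essentially the same route as the paper: fix the flat connection via isomonodromy and the smooth trivialization, expand the harmonic metric as $h_0+th_0g+\bar th_0g^{\star_{h_0}}+O(|t|^2)$, linearize the harmonicity equation to obtain the elliptic PDE \eqref{equ_g} for $g$, and read off the $(1,0)$ and $(0,1)$ deformation classes, with the metric contribution to the anti-holomorphic part being exact so that only the conjugated Kodaira--Spencer term survives. This matches Lemma \ref{Taylor}, Propositions \ref{pde} and \ref{thetadbar}, and the gauge-fixing step in the paper's proof of Theorem \ref{main1}.
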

We refer to the above derivatives $\Phi^{1,0}_{KS}$ and $\Phi^{0,1}_{KS}$ as the \textbf{non-abelian Kodaira-Spencer maps} because they arise directly from the isomonodromic deformation and the Hitchin-Simpson correspondence.

The PDE that $g$ satisfies is a linear non-homogeneous second-order elliptic PDE, similar to the classical $\partial\bar\partial$-equation in complex geometry. The complex conjugation of $\Phi^{0,1}_{KS}$ is indeed a sheaf morphism $ p^\ast TS\to T(\Mdol/S)$. But the holomorphic part $\Phi^{1,0}_{KS}$ seems to be \textbf{transcendental} even in the rank one case, i.e. $\mathcal M_{\mathrm{Dol}}(\mathcal X/S,\mathbb C^\ast)$.

As a direct corollary of Theorem \ref{thm-A}, the isomonodromic deformation of any unitary Higgs bundle is holomorphic, as $\Phi^{0,1}\equiv 0$.

One may wonder what happens if the isomonodromic deformation of any Higgs bundle is holomorphic. We propose the following conjecture: 
\begin{conjx}[=Conjecture~\ref{nfconj}]\label{conj-B} Let $\Mdol^{0}$ be an irreducible component of $\Mdol$. Then the following statements are equivalent:\\
(a) Any isomonodromic deformation section $\sigma: S\to \Mdol^{0}$ is a holomorphic section;\\
(b) $\Mdol^{0}$ is locally trivial over $S$.
\end{conjx}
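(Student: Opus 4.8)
The plan is to prove both implications by routing them through the vanishing of the Kodaira--Spencer class $\rho_{KS}$ of the family $\mathcal X/S$, using Theorem~\ref{thm-A} to convert holomorphicity of $\sigma$ into a cohomological condition. By Theorem~\ref{thm-A}(ii) the section $\sigma$ through a point $(E,\bar\p,\theta)\in\Mdol^{0}$ over $s$ is holomorphic exactly when its antiholomorphic derivative $\Phi^{0,1}_{KS}(\tfrac{\p}{\p t})=\overline{\theta_\ast\circ\rho_{KS}(\tfrac{\p}{\p t})}$ vanishes. Since through every point of $\Mdol^{0}$ and in every direction there passes an isomonodromic section, statement (a) is equivalent to the identical vanishing of the bundle map $\Theta_{KS}=\theta_\ast\circ\rho_{KS}\colon p^\ast TS\to T(\Mdol/S)$ on $\Mdol^{0}$. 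The whole statement therefore reduces to: $\Theta_{KS}$ vanishes identically on $\Mdol^{0}$ if and only if $\Mdol^{0}$ is locally trivial over $S$. I will establish each side by showing it is equivalent to $\rho_{KS}\equiv 0$. The easy input is that $\rho_{KS}\equiv 0$ implies both (a) and (b): if the Kodaira--Spencer class of $\mathcal X/S$ vanishes on the germ $S$, then $\mathcal X/S$ is locally trivial, so $\mathcal X|_U\cong U\times X_0$ and hence $\Mdol^{0}|_U\cong U\times\mathcal M^{0}_{\mathrm{Dol}}(X_0)$, which is (b); and $\rho_{KS}=0$ makes $\Theta_{KS}=\theta_\ast\circ\rho_{KS}$ vanish, which is (a).

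For (a)$\Rightarrow$(b) I would upgrade $\Theta_{KS}\equiv 0$ to $\rho_{KS}\equiv 0$. The class $\rho_{KS}(\tfrac{\p}{\p t})\in H^1(X_s,T_{X_s})$ is independent of the Higgs bundle, whereas $\theta_\ast$ is not; so $\Theta_{KS}\equiv 0$ says that $\rho_{KS}(\tfrac{\p}{\p t})$ lies in the common kernel $\bigcap_{(E,\theta)}\ker\theta_\ast\subseteq H^1(X_s,T_{X_s})$, the intersection running over all Higgs bundles of the component over $s$. The key geometric claim is that this common kernel is zero. Concretely $\theta_\ast$ is cup product with $\theta$, dual to a multiplication map; for instance in rank one $\bigcap_\theta\ker\theta_\ast$ is the annihilator of the image of the multiplication $H^0(\Omega^1_{X_s})\otimes H^0(\Omega^1_{X_s})\to H^0((\Omega^1_{X_s})^{\otimes 2})$, which vanishes by Noether's theorem for a general curve. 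In higher rank one runs the same argument using Higgs bundles over a generic point of the Hitchin base, where $\theta$ is everywhere regular, so that the associated spectral cover sees every first-order deformation of $X_s$ and the relevant cup map is surjective. Granting the vanishing of the common kernel, $\Theta_{KS}\equiv 0$ forces $\rho_{KS}\equiv 0$, and (b) follows from the easy input.

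For (b)$\Rightarrow$(a) I would again prove $\rho_{KS}\equiv 0$, now by a Torelli-type rigidity. A holomorphic trivialization $\Mdol^{0}|_U\cong U\times\mathcal M^{0}_{\mathrm{Dol}}(X_0)$ makes all fibres $\mathcal M^{0}_{\mathrm{Dol}}(X_s)$ biholomorphic; since the Dolbeault moduli space of a smooth projective curve determines the curve (a Torelli theorem for Higgs/character varieties, recovered from the Hitchin map together with the discriminant of the spectral construction), every $X_s$ is isomorphic to $X_0$. Thus the classifying map of $\mathcal X/S$ to the moduli of curves is constant, and as $g\ge 2$ forces $\mathrm{Aut}(X_0)$ to be finite, the differential of this constant map vanishes, i.e.\ $\rho_{KS}\equiv 0$; then (a) follows from the easy input.

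The main obstacle is the vanishing of the common kernel of $\theta_\ast$ along an arbitrary irreducible component. For components whose generic Higgs field is nilpotent, or otherwise too degenerate (so that $\ker\theta_\ast$ stays large along the whole component), the intersection $\bigcap\ker\theta_\ast$ could be nonzero; controlling these cases---either by deforming within the component to a regular $\theta$, or by a direct computation of $\theta_\ast$ on the nilpotent cone---is the crux of (a)$\Rightarrow$(b). A secondary difficulty is making the Torelli input in (b)$\Rightarrow$(a) unconditional in the low-genus and low-rank cases, where the Dolbeault space may fail to recover $X_s$ and a separate argument (or an extra hypothesis on the component) would be needed.
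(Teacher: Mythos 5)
The statement you are addressing is stated in the paper as a \emph{conjecture} (Conjecture \ref{nfconj}); the paper gives no proof of it in general, only the rank-one case in Remark \ref{torelli}, where $\overline{\Phi^{0,1}_{KS}}$ is identified with the Higgs field of the weight-one variation of Hodge structures (i.e.\ the differential of the period map of $\mathrm{Jac}(\mathcal X/S)$) and the classical Torelli theorem is invoked. Your opening reduction is sound and agrees with the paper's framework: by Theorem \ref{thm-A}(ii), statement (a) is equivalent to the identical vanishing of $\theta_\ast\circ\rho_{KS}$ on $\Mdol^{0}$. But the proposal does not close the conjecture, and one of its two pillars is false as stated.

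For (a)$\Rightarrow$(b) you reduce to the claim that $\bigcap_{(E,\theta)}\ker\theta_\ast=0$, and in rank one you identify this intersection with the annihilator of the image of the multiplication $H^0(\Omega^1_{X_s})\otimes H^0(\Omega^1_{X_s})\to H^0((\Omega^1_{X_s})^{\otimes 2})$, citing Noether's theorem ``for a general curve.'' For hyperelliptic $X_s$ of genus at least $3$ this multiplication map is not surjective, the annihilator has dimension $g-2$, and the common kernel is nonzero; infinitesimal Torelli genuinely fails there, so your argument breaks for hyperelliptic families. The rank-one case must instead be run through \emph{global} Torelli, as the paper does: $\theta_\ast\circ\rho_{KS}\equiv0$ says the period map of $\mathrm{Jac}(\mathcal X/S)$ has vanishing differential, hence is constant, hence all fibres are isomorphic, and then $g\ge 2$ plus $S$ being a germ gives local triviality. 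In higher rank your appeal to a regular point of the Hitchin base is only a heuristic; the uniform vanishing of the common kernel over an arbitrary irreducible component is precisely the open content of the implication, as you yourself flag. Likewise (b)$\Rightarrow$(a) rests on a Torelli theorem recovering $X_s$ from the relative complex space $\Mdol^{0}|_U$; note that even in rank one the fibre is $\mathbb C^g\times\mathrm{Jac}(X_s)$ and an \emph{unpolarized} Jacobian does not determine the curve, so one must first show that enough extra structure (the Hitchin map, the $\mathbb C^\ast$-action, or the polarization) is intrinsic to the trivialized family. In short: the reduction to $\theta_\ast\circ\rho_{KS}\equiv0$ is correct and consistent with the paper, but both implications still contain the essential unresolved difficulty of the conjecture, and the one concrete mechanism you propose for (a)$\Rightarrow$(b) fails already for hyperelliptic curves.
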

For the rank 1 case, i.e., $\mathcal M_{\mathrm{Dol}}(\mathcal X/S,\mathbb C^\ast)$, Conjecture \ref{conj-B} reduces to the classical Torelli theorem (see Remark \ref{torelli}).
\subsection{Graded Higgs bundles and the second main result}
A graded Higgs bundle over $X$ is equivalent to the Hodge bundle of a polarized $\mathbb C$-variation of Hodge structures over $X$ (see \cite{CMP}). Simpson \cite{Simp92, Simp10} also proved that graded Higgs bundles are precisely those Higgs bundles invariant under the $\mathbb C^\ast$ action: 
$$(E,\bar\p,\theta)\stackrel{\lambda\in\mathbb C^\ast}{\longrightarrow}(E,\bar\p,\lambda\theta).$$

We consider the isomonodromic deformation $\sigma:S\to\Mdol$ of a graded Higgs bundle $\sigma(0) \in\mathcal M_{\mathrm{Dol}}(X)$.

\begin{quest} Is the isomonodromically deformed Higgs bundle still graded?\end{quest}

This question probes the commutativity between the $\mathbb C^\ast$ action and the isomonodromic deformation of a Higgs bundle. The answer is generally no, as \cite[Theorem 1.4]{KYZ} proved that $\Phi_{KS}^{0,1}$ is precisely the obstruction for $\sigma(t)$ to be a graded Higgs bundle for $t$ near $0$. 

We can prove the following stronger result by applying Theorem \ref{thm-A} and a classical Bochner technique:
\begin{thmx}[=Theorem~\ref{nonnil}]\label{thm-C} Assume (a) $\sigma(0):=(E,\bar\p,\theta)$ is a graded Higgs bundle on $X$, and \\(b) the non-abelian Kodaira-Spencer map $\Phi^{0,1}_{KS}$ is non-zero at $\sigma(0)$ for some $\frac{\p}{\p t}\in T_0S$.\\
Then the isomonodromic deformation of $(E,\bar\p,\theta)$ along the direction $\frac{\p}{\p t}\in T_0S$ is always non-nilpotent.
\end{thmx}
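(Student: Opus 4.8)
The plan is to detect non-nilpotency through the lowest non-trivial coefficient of the Hitchin map and to show that its first variation along $\p/\p t$ cannot vanish. On the compact Riemann surface $X=X_0$ a Higgs field $\vartheta$ is nilpotent if and only if every holomorphic differential $\operatorname{tr}(\vartheta^{k})\in H^0(X,(\Omega^1_X)^{\otimes k})$ vanishes; as we work with $\SL$-Higgs bundles, $\operatorname{tr}\vartheta=0$, so it suffices to exhibit one $k\ge 2$ with $\operatorname{tr}(\theta(t)^{k})\not\equiv 0$ for $0<|t|\ll 1$. I will use the quadratic differential $q(t):=\operatorname{tr}(\theta(t)^{2})$. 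Because $\sigma(0)$ is graded, i.e. a system of Hodge bundles $E=\bigoplus_p E^{p}$ with $\theta(E^{p})\subset E^{p-1}\otimes\Omega^1_X$, the field $\theta=\theta(0)$ is nilpotent and $q(0)=0$. Hence, after trivialising $\mathcal X/S$ near $0$, it is enough to prove that the first variation $\dot q:=\tfrac{d}{dt}\big|_{0}q(t)$ is a \emph{non-zero} holomorphic quadratic differential, since then $q(t)\not\equiv 0$ for small $t\ne 0$. Moreover, since the coefficients of the Hitchin map are holomorphic functions on $\Mdol$, the variation of $q$ along the real curve $t\mapsto\sigma(t)$ equals its differential applied to the $(1,0)$-part of the velocity, so only $\Phi^{1,0}_{KS}$ contributes. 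By Theorem~\ref{main1}(i) the Higgs-field component of this derivative is $\dot\theta=-\cc\,\Ch^{1,0}g$, whence $\dot q=2\operatorname{tr}(\theta\,\dot\theta)=-\operatorname{tr}(\theta\,\Ch^{1,0}g)$.

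Next I exploit the graded structure together with the harmonic metric $h_0$ adapted to it. With respect to the $h_0$-orthogonal decomposition $E=\bigoplus_p E^{p}$ the Chern connection $\Ch$, and hence $\Ch^{1,0}$, preserves the grading, while $\theta$ has degree $-1$ and $\theta^{\star_{h_0}}$ degree $+1$. Writing $g=\sum_k g^{(k)}$ according to grading-shift, the trace $\operatorname{tr}(\theta\,\Ch^{1,0}g)$ only sees the component $g^{(+1)}$ (every other degree produces an endomorphism of zero trace), so $\dot q=-\operatorname{tr}(\theta\,\Ch^{1,0}g^{(+1)})$. Decomposing the elliptic equation \eqref{equ_g} of Proposition~\ref{thetadbar} by degree, and using that its leading operator $g\mapsto\bar\p^{\ast}\bar\p g+[\theta,[\theta^{\star_{h_0}},g]]$ is grading-preserving, the component $g^{(+1)}$ is governed solely by the degree $+1$ part of the inhomogeneous term. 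That part is the conjugate Kodaira--Spencer term representing $\Phi^{0,1}_{KS}=\overline{\theta_\ast\rho_{KS}(\p/\p t)}$: the harmonic representative of $\theta_\ast\rho_{KS}(\p/\p t)$ contracts $\theta$ with the Kodaira--Spencer class and has degree $-1$, so its conjugate has degree $+1$. By hypothesis (b) this source is non-zero.

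The heart of the matter, and the step I expect to be the main obstacle, is to pass from ``the degree $+1$ source is non-zero'' to ``$\dot q\not\equiv0$'', and this is where the classical Bochner technique enters. I will argue by contradiction: suppose $\theta(t)$ is nilpotent along $\p/\p t$, so that $\dot q=-\operatorname{tr}(\theta\,\Ch^{1,0}g^{(+1)})\equiv 0$ as a holomorphic section of $(\Omega^1_X)^{\otimes 2}$. Pairing this identity with $g^{(+1)}$ through $h_0$ and integrating over the compact curve $X$, I will integrate by parts (there are no boundary terms) and use the Kähler identities together with the Hitchin equation $F_{h_0}+[\theta,\theta^{\star_{h_0}}]=0$ to rewrite the integral as a sum of manifestly non-negative $L^2$-quantities, such as $\|\bar\p g^{(+1)}\|^{2}$ and $\|[\theta^{\star_{h_0}},g^{(+1)}]\|^{2}$, plus the pairing of $g^{(+1)}$ against the degree $+1$ source. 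The vanishing then forces $g^{(+1)}$ into the kernel of the Laplacian of \eqref{equ_g}; substituting back makes the degree $+1$ source orthogonal to $g^{(+1)}$, and a second application of the same integration by parts drives the source itself to zero, contradicting (b). The delicate point throughout is the bookkeeping of signs and of $(1,0)$/$(0,1)$ types so that all cross-terms assemble into squares; this is precisely the Bochner--Weitzenböck computation advertised after Theorem~\ref{thm-A}.

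Running this contrapositive, hypothesis (b) yields $\dot q\ne 0$, hence $q(t)=\operatorname{tr}(\theta(t)^{2})\not\equiv0$ for small $t\ne0$, so the isomonodromically deformed Higgs field is non-nilpotent along $\p/\p t$. As a sanity check, in the unitary case $\theta=0$ the source vanishes identically, $\Phi^{0,1}_{KS}\equiv 0$, and the statement is vacuously consistent with the corollary of Theorem~\ref{thm-A} that unitary isomonodromic deformations are holomorphic.
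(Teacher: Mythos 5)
Your strategy---detecting non-nilpotency through the first variation of $q(t)=\mathrm{tr}(\theta_t^2)$, exploiting the Hodge grading, and closing with a Bochner argument on the elliptic equation for the metric variation---is the same as the paper's, but the pivotal step is wrong and it inverts the entire computation. You assert that, because the Hitchin map is holomorphic, only $\Phi^{1,0}_{KS}$ contributes to the first variation of $q$ along the real-analytic section $\sigma$, and you conclude $\dot q=-\mathrm{tr}(\theta\,\Ch^{1,0}g)$. Holomorphy of a function $F$ only tells you that the $dt$-component of $d(F\circ\sigma)$ is $\p F$ applied to the $(1,0)$-velocity; the $d\bar t$-component is $\p F$ applied to the conjugate of the $(0,1)$-velocity $\Phi^{0,1}_{KS}(\p/\p t)$, and to conclude $q\not\equiv 0$ from $q(0)=0$ you need the full differential, not just its $dt$-part. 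In the graded situation the roles are exactly reversed from what you claim: the source $-2\Ch^{1,0}(\eta(\theta))$ of the first equation in \eqref{equ_g} is purely of degree $-1$ for the grading, so uniqueness forces $g$ itself to be purely of degree $-1$ (this is Lemma \ref{g}); hence your $g^{(+1)}$ vanishes, $\theta\,\Ch^{1,0}g$ has degree $-2$, and your $\dot q$ is \emph{identically zero}. The degree $+1$ source you correctly identify (the conjugate of the Kodaira--Spencer contraction) sits in the equation for $\bg=g^{\star_{h_0}}$, not for $g$, and it enters $\theta_t$ with coefficient $\bar t$. The correct statement, which the paper proves, is that the $t$-coefficient of $\mathrm{tr}(\theta_t^2)$ vanishes identically and the whole first-order term is $\bar t\cdot\mathrm{tr}\bigl(\theta(2\bar\eta(\theta^{\star_{h_0}})-\Ch^{1,0}\bg)\bigr)$, i.e.\ the variation is carried entirely by the anti-holomorphic derivative. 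As written, your proposal computes a quantity that is identically zero and therefore proves nothing.

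Even granting the corrected variational formula, your Bochner step is only a statement of intent. The paper's argument sets $\phi:=2\bar\eta(\theta^{\star_{h_0}})-\Ch^{1,0}\bg$, uses \eqref{equ_g} to get $\bar\p\phi=[\theta,[\theta^{\star_{h_0}},\bg]]$, shows via the K\"ahler identity for $\Ch^{1,0}$ that the $L^2$ cross-term $\mathrm{Re}\,i\int_{X_0}\mathrm{tr}\bigl(\phi\wedge(\Ch^{1,0}\bg)^{\star_{h_0}}\bigr)$ vanishes, deduces a Pythagorean identity giving strict domination of $\|\Ch^{1,0}\bg\|$ by $\|2\bar\eta(\theta^{\star_{h_0}})\|$ unless $\phi\equiv 0$, and then applies Cauchy--Schwarz to conclude $\mathrm{tr}(\theta\phi)\not\equiv 0$; the degenerate case $\phi\equiv 0$ is excluded separately, since it forces $[\theta,g]\equiv 0$ and contradicts hypothesis (b). None of this is carried out in your proposal, and the sign/type bookkeeping you flag as the expected obstacle is precisely where your error occurs.
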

In our proof of Theorem \ref{thm-C}, we show that the image of $\Phi^{0,1}_{KS}$ is contained in the horizontal direction (defined in \cite[section 1.3]{MT}) of the Hitchin fibration map. Hence, we think it is possible to relax condition (a) in Theorem \ref{thm-C} and we propose the following conjecture:
\begin{conjx}[=Conjecture~\ref{nil_nonnil}]\label{conj-D} Assume (a) $\sigma(0):=(E,\bar\p,\theta)$ is a nilpotent Higgs bundle on $X_0$ and \\(b) the non-abelian Kodaira-Spencer map $\Phi^{0,1}_{KS}$ is non-zero at $\sigma(0)$ for some $\frac{\p}{\p t}\in T_0S$.\\
Then the isomonodromic deformation of $(E,\bar\p,\theta)$ is always non-nilpotent along the direction $\frac{\p}{\p t}\in T_0S$.
\end{conjx}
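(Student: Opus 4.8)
The plan is to detect non-nilpotency through the Hitchin map. A Higgs field $\theta_t$ on $X_t$ is nilpotent if and only if every characteristic coefficient $\mathrm{tr}(\theta_t^{\otimes k})\in H^0(X_t,(\Omega^1_{X_t})^{\otimes k})$, $2\le k\le n$, vanishes, i.e. $h(\sigma(t))=0$ for the relative Hitchin map $h:\Mdol\to\mathcal B(\mathcal X/S)$. Since $\sigma(0)$ is graded it is a $\mathbb C^\ast$-fixed point, hence nilpotent, so $h(\sigma(0))=0$ and in particular $\mathrm{tr}(\theta_0^{\otimes 2})=0$. It therefore suffices to show that the single coefficient $\mathrm{tr}(\theta_t^{\otimes 2})$ is \emph{not} identically zero for small $t$ along $\frac{\p}{\p t}$, since $\mathrm{tr}(\theta_t^{\otimes 2})\neq 0$ already forces $\theta_t$ to be non-nilpotent.

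First I would fix the harmonic gauge and use the $\mathbb C^\ast$-weight decomposition $\End E=\bigoplus_j(\End E)_j$, so that $\theta=\theta_0$ has weight $-1$ while $\theta^{\star_{h_0}}$ has weight $+1$, the grading being $h_0$-orthogonal. By Theorem~\ref{main1}(ii) the anti-holomorphic derivative is $\Phi^{0,1}_{KS}=\overline{\theta_\ast(\eta)}$ with $\eta=\rho_{KS}(\frac{\p}{\p t})$; writing the Dolbeault representative $\theta_\ast(\eta)=(\iota_\eta\theta,0)$ and passing to the conjugate through the harmonic metric, the induced first-order deformation of the Higgs field, measured in the harmonic gauge, acquires the weight $+1$ term $\beta:=(\iota_\eta\theta)^{\star_{h_0}}\in\mathcal A^{1,0}(\End E)$. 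The decisive structural point is that $\beta$ \emph{raises} the Hodge grading whereas $\theta_0$ lowers it, so the product $\theta_0\beta$ is grading-preserving and $\mathrm{tr}(\theta_0\beta)$ need not vanish. Using also the explicit holomorphic derivative of Theorem~\ref{main1}(i), built from the endomorphism $g$ solving \eqref{equ_g}, I would expand $\mathrm{tr}(\theta_t^{\otimes 2})$ along $\sigma$ and isolate the contribution $q:=\mathrm{tr}(\theta_0\beta)$ coming from this weight $+1$ term.

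The heart of the argument is then a Bochner/integration step. Pairing $q$ with the Kodaira--Spencer class $\eta\in H^1(X_0,T_{X_0})$ by Serre duality $H^0((\Omega^1_{X_0})^{\otimes 2})\times H^1(T_{X_0})\to H^1(\Omega^1_{X_0})\cong\mathbb C$, a local computation in a holomorphic coordinate $z$ (with $\theta=\theta_z\,dz$ and $\eta=\eta_{\bar z}\,d\bar z\otimes\p_z$) gives
\[
\langle q,\eta\rangle=\int_{X_0}\mathrm{tr}\bigl(\theta_z\theta_z^{\star_{h_0}}\bigr)\,|\eta_{\bar z}|^2\,\tfrac{i}{2}\,dz\wedge d\bar z=\|\iota_\eta\theta\|^2_{L^2,h_0}.
\]
This quantity is strictly positive: hypothesis (b) says $\Phi^{0,1}_{KS}\neq 0$, so the class $\theta_\ast(\eta)$ is non-zero in $\mathbb H^1(X_0,(\End E,\tad))$, which forces $\iota_\eta\theta\not\equiv 0$ and hence $\|\iota_\eta\theta\|^2_{L^2,h_0}>0$. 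Since the duality pairing factors through the holomorphic projection, it follows that the holomorphic projection of $q$ is non-zero in $H^0(X_0,(\Omega^1_{X_0})^{\otimes 2})$, so $\mathrm{tr}(\theta_t^{\otimes 2})$ is not identically zero near $0$ and $\theta_t$ is non-nilpotent. Reading this positivity as the assertion that $\Phi^{0,1}_{KS}$ maps into the horizontal part of the Hitchin fibration (in the sense of \cite[Section 1.3]{MT}) records its geometric meaning: the anti-holomorphic derivative points out of the nilpotent cone, refining the obstruction-to-gradedness statement of \cite[Theorem 1.4]{KYZ}.

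I expect the main obstacle to be the order bookkeeping in the Taylor expansion of $\mathrm{tr}(\theta_t^{\otimes 2})$ together with the non-cancellation of the $\beta$-contribution against the transcendental holomorphic derivative $\Phi^{1,0}_{KS}$. Because $\theta_\ast(\eta)$ has vanishing Higgs component in the representative $(\iota_\eta\theta,0)$, the naive variation of the Hitchin coefficient can be zero, and the non-nilpotency is genuinely produced by the weight $+1$ term supplied by the harmonic-metric conjugation; so the delicate part is to make the identification of $\beta$ with the gauge-fixed Higgs-field deformation rigorous and to confirm the sign and non-degeneracy in the Bochner identity above, ensuring the positive term $\|\iota_\eta\theta\|^2_{h_0}$ survives in the coefficient that controls whether $\sigma(t)$ leaves the nilpotent cone.
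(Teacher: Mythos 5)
There is a genuine gap, and it is the central one: the statement you are asked to prove is Conjecture \ref{nil_nonnil}, whose hypothesis (a) is only that $\sigma(0)$ is \emph{nilpotent}, yet your argument opens with ``Since $\sigma(0)$ is graded it is a $\mathbb C^\ast$-fixed point'' and then relies throughout on the $\mathbb C^\ast$-weight decomposition of $\End E$, on $\theta$ having pure weight $-1$, and on the $h_0$-orthogonality of the grading. None of this is available for a general nilpotent Higgs bundle: a nilpotent $(E,\bar\p,\theta)$ need not be a fixed point of the $\mathbb C^\ast$-action, the harmonic metric need not be block-diagonal with respect to the filtration induced by $\theta$, and the structural Lemma \ref{g} for the endomorphism $g$ (which is what makes the weight bookkeeping of $\eta(\theta)$, $[\theta,g]$, $\Ch^{1,0}g$ and $\bar\eta(\theta^{\star_{h_0}})$ work) is proved only in the graded case. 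What you have written is therefore an outline of the paper's Theorem \ref{nonnil} (the graded case), which the paper does prove; Conjecture \ref{nil_nonnil} is deliberately left open there precisely because the passage from graded to nilpotent is the missing idea, and your proposal does not supply it.

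Even restricted to the graded case, the argument is incomplete at the step you yourself flag as ``the delicate part.'' The first-order coefficient of $\mathrm{tr}(\theta_t^2)$ is not $\mathrm{tr}(\theta\,\bar\eta(\theta^{\star_{h_0}}))$ alone but $\mathrm{tr}(\theta(2\bar\eta(\theta^{\star_{h_0}})-\Ch^{1,0}\bg))$, and the possible cancellation between $\bar\eta(\theta^{\star_{h_0}})$ and $\Ch^{1,0}\bg$ is exactly what must be excluded; your Serre-duality computation $\langle q,\eta\rangle=\|\iota_\eta\theta\|^2_{L^2,h_0}>0$ only treats the $\beta$-term. The paper closes this by setting $\phi:=2\bar\eta(\theta^{\star_{h_0}})-\Ch^{1,0}\bg$, using the PDE \eqref{equ_g} in the form $\bar\p\phi=[\theta,[\theta^{\star_{h_0}},\bg]]$, and showing via the K\"ahler identity that the cross term $2\mathrm{Re}\big(i\int_{X_0}\mathrm{tr}(\phi\wedge(\Ch^{1,0}\bg)^{\star_{h_0}})\big)$ vanishes, whence $\|2\bar\eta(\theta^{\star_{h_0}})\|^2=\|\Ch^{1,0}\bg\|^2+\|\phi\|^2$; a Cauchy--Schwarz argument then forces $\mathrm{tr}(\theta\phi)\not\equiv0$ unless $\phi\equiv0$, in which case a second integration by parts gives $[\theta,g]\equiv0$ and contradicts hypothesis (b). Without this Bochner identity your positivity claim does not control the coefficient that actually governs whether $\sigma(t)$ leaves the nilpotent cone.
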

Let $\mathcal T_g$ be the Teichm\"uller space of genus $g\geq 2$ compact Riemann surfaces and let $\mathcal X/\mathcal T_g$ be its universal family. Since $\mathcal T_g$ is contractible, any isomonodromic section $\sigma:\mathcal T_g\to \mathcal M_{\mathrm{Dol}}(\mathcal X/\mathcal T_g)$ is well-defined. 

We may ask the {\bf distribution} of nilpotent Higgs bundles when doing the isomonodromic deformation:
\begin{conjx}[=Conjecture~\ref{disconj}]\label{conj-E} We say the isomonodromic section $\sigma:\mathcal T_g\to \mathcal M_{\mathrm{Dol}}(\mathcal X/\mathcal T_g)$ has nilpotent loci $\mathcal N\subset \mathcal T_g$ if $\mathcal N$ is a union of all maximal irreducible real analytic subvarieties $S_i\subset \mathcal T_g,\ i\in I$, such that $\sigma|_{S_i}$ is always nilpotent. We expect that $\{S_i:i\in I\}$ is a {\bf discrete} set of subvarieties. \\
(The word 'discrete' means that at any point $s\in \mathcal T_g$, there is a Euclidean open neighborhood $U$ of $s$, such that $\{i\in I:\ U\cap S_i\ne\emptyset\}$ is a finite set.)
\end{conjx}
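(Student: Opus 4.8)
The plan is to exhibit the nilpotent locus $\mathcal N$ as the zero set of a single real analytic section of a finite-rank bundle over $\mathcal T_g$, and then to read off discreteness from the local finiteness of the irreducible decomposition of real analytic subvarieties. First I would set up the relative Hitchin fibration $H:\mathcal M_{\mathrm{Dol}}(\mathcal X/\mathcal T_g)\to\mathcal B$, where $\mathcal B\to\mathcal T_g$ is the relative Hitchin base, a holomorphic vector bundle with fiber $\bigoplus_{i=2}^{n}H^0(X_s,(\Omega^1_{X_s})^{\otimes i})$ over $s$, and $H$ records the coefficients of the characteristic polynomial of the Higgs field. A Higgs bundle is nilpotent exactly when it lies in $H^{-1}(0)$, the relative global nilpotent cone, so $\mathcal N=(H\circ\sigma)^{-1}(0_{\mathcal B})$.

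The decisive observation is that $H\circ\sigma$ is a real analytic section of $\mathcal B\to\mathcal T_g$: the isomonodromic section $\sigma$ is real analytic because the Hitchin-Simpson correspondence $\Psi$ is real analytic, whereas $H$ is holomorphic on the total space, being fiberwise algebraic and varying holomorphically in $s$. Hence the components of $H\circ\sigma$ in a local holomorphic trivialization of $\mathcal B$ are real analytic functions on $\mathcal T_g$, and $\mathcal N$ is their common zero locus; in particular $\mathcal N$ is a real analytic subvariety. Each maximal irreducible subvariety $S_i$ on which $\sigma$ is everywhere nilpotent is then nothing but an irreducible component of $\mathcal N$, and the asserted discreteness is precisely the local finiteness of the family of irreducible components, a standard structural property of real analytic sets ({\L}ojasiewicz, Narasimhan).

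To understand the geometry of the $S_i$ I would bring in Theorem \ref{thm-C}, and conjecturally Conjecture \ref{conj-D}: their contrapositive forces $T_sS_i\subseteq\ker\Phi^{0,1}_{KS}(s)$ at each $s\in S_i$, because any direction with $\Phi^{0,1}_{KS}\ne 0$ pushes the deformation off the nilpotent cone. This already shows that $\mathcal N\subsetneq\mathcal T_g$ wherever the non-abelian Kodaira-Spencer map is nonzero, so the $S_i$ have positive codimension, and it suggests that each $S_i$ is an integral subvariety of the real analytic distribution $s\mapsto\ker\Phi^{0,1}_{KS}(s)$, which should rigidify them as loci carrying a sub-variation of Hodge structure.

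The main obstacle is twofold. First, the relative Dolbeault moduli space is singular and $\sigma$ may pass through non-smooth points, so one must verify that $H\circ\sigma$ is genuinely real analytic across all of $\mathcal T_g$ and that the local finiteness theorem is available over the non-compact base; the decomposition of real analytic sets into irreducible components is subtler than in the complex case, owing to non-coherence phenomena, so the appeal to this theorem must be made with care. Second, should naive local finiteness fail to deliver the stated uniform local finiteness of the global components, I would replace it by a quantitative refinement of the Bochner estimate behind Theorem \ref{thm-C}, proving that along each $S_i$ the restriction of $\sigma$ underlies a variation of Hodge structure and invoking Hodge-theoretic rigidity and Arakelov-type finiteness to bound the number of such special subvarieties meeting a fixed neighborhood.
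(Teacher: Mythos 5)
First, note that the statement you are addressing is Conjecture~\ref{disconj}: the paper states it as an open problem and supplies no proof, so there is nothing in the paper to compare your argument against, and your proposal has to be judged as a self-contained attempt.

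Your opening reduction is the right move: since a Higgs field is nilpotent exactly when its characteristic polynomial vanishes, the nilpotent locus is $(H\circ\sigma)^{-1}(0_{\mathcal B})$ for the relative Hitchin map $H$, and if $H\circ\sigma$ is real analytic this exhibits $\mathcal N$ as a real analytic subset of $\mathcal T_g$. But the proposal does not close either of the two points on which the conjecture actually turns. (1) Real analyticity of $\sigma$, hence of $H\circ\sigma$, is only available where the Hitchin--Simpson correspondence is real analytic; the paper explicitly restricts its tangent-space statements to smooth points of the moduli space, so an isomonodromic section passing through strictly polystable (reducible) representations needs a separate argument --- for instance, decomposing $\rho$ into irreducible summands, whose type is constant along the isomonodromic deformation, and showing that on each summand the harmonic metric depends real analytically on the complex structure via the implicit function theorem. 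You flag this but do not resolve it. (2) The decisive step --- local finiteness of the family $\{S_i\}$ --- is exactly the content of the conjecture, and you dispose of it by citing ``local finiteness of irreducible components of real analytic sets ({\L}ojasiewicz, Narasimhan).'' That citation is off: Narasimhan's local finiteness theorem concerns \emph{complex} analytic sets, and for real analytic sets the relevant statement is the Whitney--Bruhat theory of $C$-analytic sets. To invoke it you must check (a) that $\mathcal N$ is globally cut out by real analytic functions (this does hold here, since $\mathcal B$ is holomorphically trivial over the contractible Stein manifold $\mathcal T_g$), and (b) that the conjecture's ``maximal irreducible real analytic subvarieties on which $\sigma$ is nilpotent'' coincide with the irreducible $C$-analytic components of $\mathcal N$: one must show that an irreducible subvariety contained in a locally finite union of analytic sets lies in a single one, and that maximal elements exist at all. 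You acknowledge the non-coherence worry yourself, and the fallbacks you offer (a ``quantitative Bochner estimate,'' ``Arakelov-type finiteness'') are speculative and not visibly connected to the finiteness you need; likewise the middle paragraph on Theorem~\ref{thm-C} and Conjecture~\ref{conj-D} plays no role in the discreteness claim. In short: the skeleton is promising and plausibly reduces the conjecture to the two concrete verifications above, but as written it is not a proof.
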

\section{Preliminaries}
Now we review some basic theory and notations. For a background on Hodge theory and Higgs bundles, one may refer to \cite{CMP}. For a survey paper on harmonic maps and Higgs bundles, one may refer to \cite{Li}.
\subsection{Local system and vector bundle}
Let \( X_0 \) be a compact Riemann surface of genus $g\geq 2$ with a K\"ahler form $\omega_0$. For any holomorphic (smooth) vector bundle $E$ over $X_0$, let $\mathcal A^{p,q}(E):=C^\infty(E\otimes \Omega^{p,q}_{X_0})$ be the space of smooth $(p,q)$ sections of $E$ on $X_0$.

\hfill

\noindent
\textbf{Flat bundle}: a tuple \( (E, D) \), where \( E \) is a holomorphic vector bundle on $X_0$ with a flat holomorphic connection $D:\mathcal O(E)\to \mathcal O(E\otimes \Omega_{X_0}^1)$. After tensoring $C^{\infty}(X_0)$, $(E,D)$ is a smooth flat vector bundle with a flat connection $D:\mathcal A^0(E)\to\mathcal A^1(E)$. In this paper, a flat bundle refers to a smooth flat bundle unless otherwise stated.

\hfill

\noindent
\textbf{$\mathbb C$-local system}: a locally constant sheaf over $X_0$ with stalk \( V \), where $V$ is a complex vector space. For a local system \( \mathcal V\) over \( X_0 \), then $ \mathcal V \otimes_{\mathbb{C}} \mathscr C^{\infty}_{X_0}$ is a locally free sheaf of \( \mathscr C^{\infty}_{X_0} \)-modules, which is a flat smooth vector bundle. Similarly, $\mathcal V \otimes_{\mathbb{C}} \mathcal{O}_{X_0}$ is a locally free sheaf of \( \mathcal{O}_{X_0} \)-modules, which is a flat holomorphic vector bundle.  

\hfill

\noindent
\textbf{Chern connection}: given a holomorphic vector bundle \( (E, \bar\p) \) with a Hermitian metric \( h \), there exists a unique unitary connection \( D_{h} \) such that \( D_{h}^{0,1} = \bar\p \), which is called the Chern connection. Let $F(D_h):=D_h\circ D_h$ be the Chern curvature, which is a (1,1) form of $\mathrm{End}E$.

\hfill

\noindent
\textbf{Hermitian metric on \( \mathcal A^1(\mathrm{End} E) \)}: let $(E,h)$ be a Hermitian vector bundle. For any \( \varphi, \psi \in \mathcal A^0(\mathrm{End} E) \), $\alpha,\beta\in \mathcal A^1(X_0)$  
\[
\langle \varphi \otimes \alpha, \psi \otimes \beta \rangle := \mathrm{tr}(\varphi \psi^{\star_{h}}) \cdot \Lambda_{\omega_0}(\alpha \wedge *\beta)
\]  
where $*$ is the Hodge star operator with respect to $\omega_0$, $\star_h$ is the Hodge star operator with respect to $h$, and $\Lambda_{\omega_0}$ is the contraction by $\omega_0$.

\subsection{Harmonic metric}
Given a flat vector bundle \( (E, D) \), and any Hermitian metric $h$ on it,  
we have the following unique decomposition
\[
D = D_h + \psi_h
\] 
where \( D_h \) is unitary, and \( \psi_h \) is self-adjoint, decomposed by type (1,0) and (0,1):  
\begin{align*}
    D  = D_h + \psi_h  = D_h^{1,0} + D_h^{0,1} + \theta + \theta^{\star_h}.
\end{align*}

Define the energy functional 
\[
E(h) := \int_{X_0} \langle \psi_h, \psi_h \rangle \, \omega_0.
\]  

If \( h \) is a minimal point of \( E(h) \), we call it a \textbf{harmonic metric}. Such $h$ satisfies the following equation
$$D_h^{\star_h}\psi_h=0.$$

If a harmonic metric $h$ on $(E,D)$ exists, then  
\( (E, D_h^{0,1}, \theta) \) is a Higgs bundle.

\begin{theorem*}[Donaldson, Corlette]
    A semisimple flat bundle $(E,D)$ admits a harmonic metric. 
\end{theorem*}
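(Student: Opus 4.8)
The plan is to recast this analytic statement as an equivariant harmonic map problem into a nonpositively curved symmetric space, where the semisimplicity hypothesis becomes exactly the compactness condition that makes a minimizer exist. First I would fix the dictionary. Writing $(E,D)$ as the flat bundle of its monodromy representation $\rho:\pi_1(X_0)\to\SL$, a Hermitian metric $h$ on $E$ is the same datum as a reduction of the structure group to the maximal compact $\mathrm{SU}(n)$, i.e. a $\rho$-equivariant map $f:\widetilde{X_0}\to N$, where $N:=\SL/\mathrm{SU}(n)$ is the symmetric space and $\widetilde{X_0}$ is the universal cover. Under this correspondence the self-adjoint part $\psi_h$ of $D$ encodes the differential of $f$, so that (up to a universal constant) the integrand $\langle\psi_h,\psi_h\rangle\,\omega_0$ is the Dirichlet energy density of $f$ and $E(h)$ is the energy of $f$. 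The Euler--Lagrange equation $D_h^{\star_h}\psi_h=0$ is then precisely the vanishing of the tension field, i.e. the assertion that $f$ is harmonic. Thus a harmonic metric exists if and only if $\rho$ admits an equivariant harmonic map $\widetilde{X_0}\to N$.

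Second, I would exploit that $N$ is a Hadamard manifold (complete, simply connected, nonpositive sectional curvature). On such a target the energy is geodesically convex along the natural geodesics in the space of equivariant maps, so any critical point is automatically a global minimizer and it suffices to produce one. I would run the Eells--Sampson heat flow $\partial_t f=\tau(f)$ starting from an arbitrary smooth equivariant metric; because the target curvature is nonpositive, Eells--Sampson theory provides a smooth solution for all $t\in[0,\infty)$ with nonincreasing energy. It then remains to show the flow subconverges as $t\to\infty$, convexity guaranteeing that any limit is harmonic (and the minimizer). In the present surface case the conformal invariance of the two-dimensional energy gives extra room, but it is not essential to the argument.

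The crux, and the only place semisimplicity is genuinely used, is ruling out escape to infinity: one must bound $\sup_{x}\,d\big(f_t(x),f_0(x)\big)$ as $t\to\infty$, so that the flow stays in a bounded region of $N$ modulo the obvious symmetries. Here I would argue by contrapositive following Corlette. If this displacement diverges, then after normalizing one extracts in the limit a $\rho$-equivariant map of $\widetilde{X_0}$ into the ideal boundary $\partial_\infty N$, that is, a $\rho$-fixed point at infinity. By the theory of parabolic subgroups such a boundary fixed point corresponds to a proper $\rho$-invariant subspace $W\subsetneq\mathbb{C}^n$ that admits no $\rho$-invariant complement. This contradicts the complete reducibility of $\rho$ forced by semisimplicity. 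Hence the displacement stays bounded, the flow subconverges, and its limit $f_\infty$ is the desired equivariant harmonic map.

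I expect the main obstacle to be exactly this last step: making rigorous the extraction of the asymptotic fixed point and the coercivity it yields. The technical heart is the a priori $C^0$ estimate converting ``energy bounded but image divergent'' into a genuine boundary-at-infinity direction, which rests on the subharmonicity of $x\mapsto d\big(f_t(x),p\big)^2$ (distance-squared to a point is convex on a Hadamard space, hence subharmonic after composition with a harmonic-in-the-limit map) combined with the cocompactness of $\pi_1(X_0)$ acting on $\widetilde{X_0}$. Once this properness is established, higher regularity of $f_\infty$ is standard elliptic bootstrapping, and unwinding the dictionary returns $D_h^{\star_h}\psi_h=0$, so that $(E,D_h^{0,1},\theta)$ is the associated Higgs bundle and $h$ is the sought harmonic metric.
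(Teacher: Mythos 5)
The paper offers no proof of this statement: it is quoted as a background theorem with attribution to Donaldson and Corlette, so there is nothing in the text to compare your argument against line by line. That said, your sketch is a faithful outline of the standard proof (Donaldson for rank $2$ over surfaces, Corlette in general): the dictionary between Hermitian metrics on $(E,D)$ and $\rho$-equivariant maps $f:\widetilde{X_0}\to N=\mathrm{SL}(n,\mathbb{C})/\mathrm{SU}(n)$, the identification of $E(h)$ with the Dirichlet energy and of $D_h^{\star_h}\psi_h=0$ with the vanishing of the tension field, long-time existence and energy monotonicity of the Eells--Sampson flow into a Hadamard target, and the use of semisimplicity only to prevent escape to infinity. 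This is exactly the argument the paper is implicitly invoking by citing those two names.

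One step is stated imprecisely and would need repair before this counts as a proof. A $\rho$-fixed point $\xi\in\partial_\infty N$ only tells you that the image of $\rho$ lies in the parabolic $P_\xi$, hence preserves a flag; it does \emph{not} by itself produce an invariant subspace with no invariant complement. Completely reducible representations routinely fix points at infinity (any nontrivial direct sum $\rho_1\oplus\rho_2$ does), so ``the flow diverges $\Rightarrow$ fixed point at infinity $\Rightarrow$ not semisimple'' is false as written. The correct mechanism is either (a) to show that the escape direction of the flow yields a fixed point $\xi$ for which $\rho$ does \emph{not} also fix an opposite point, i.e.\ the image is not contained in a Levi factor of $P_\xi$, and it is \emph{this} that contradicts complete reducibility; or (b) to reduce at the outset to the irreducible summands of $\rho$, where no proper invariant subspace exists at all, solve there, and take the orthogonal direct sum of the resulting harmonic metrics. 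You correctly flag the coercivity estimate as the technical heart, but the group-theoretic half of that step (parabolics versus Levi factors) is where your write-up currently proves less than it claims. With that repair the outline matches the literature.
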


\subsection{Higgs bundle}
A Higgs bundle over $X_0$ is a triple \( (E, \bar{\partial}, \theta) \), where \( E \) is a holomorphic vector bundle on $X_0$ with a holomorphic structure \( \bar{\partial} \), and \( \theta \in \mathcal A^{1,0}(\mathrm{End} E) \) satisfies \( \bar{\partial}\theta = 0 \) and \( \theta \wedge \theta = 0 \). (When $X_0$ is a compact Riemann surface, the condition $\theta\wedge\theta=0$ is automatically satisfied.)

We say $h$ is a harmonic metric for a Higgs bundle \( (E, \bar{\partial}, \theta) \) if $h$ satisfies the Hitchin-Yang-Mills equation \begin{align}\label{HYM}F(D_h)+[\theta, \theta^{\star_h}]=0.\end{align}

If a harmonic metric $h$ on \( (E, \bar{\partial}, \theta) \) exists, then $(E,D_h+\theta+\theta^{\star_h})$ is a flat bundle.
\begin{theorem*}[Hitchin, Simpson]
    A polystable Higgs bundle with vanishing Chern classes $(E,\bar\p, \theta)$ admits a harmonic metric. 
\end{theorem*}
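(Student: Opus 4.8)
The plan is to produce the metric by solving the Hitchin--Yang--Mills equation \eqref{HYM} through Donaldson's nonlinear heat flow, adapted to the Higgs field following Simpson. Since we work over a compact Riemann surface, complex dimension one forces any $\mathrm{End}E$-valued $(1,1)$-form to be a pointwise multiple of $\omega_0$; hence \eqref{HYM} is equivalent to its contracted form $\Lambda_{\omega_0}\big(F(D_h)+[\theta,\theta^{\star_h}]\big)=0$, and the vanishing of the Chern classes (so $\deg E=0$) means the Hermitian--Einstein constant is zero. I would first reduce to the stable case: a polystable Higgs bundle is a direct sum of stable Higgs bundles of slope zero, and a block-diagonal assembly of harmonic metrics on the stable summands solves \eqref{HYM} on the total bundle.

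For a stable $(E,\bar\partial,\theta)$, fix a background metric $h_0$ and write a competing metric as $h=h_0\,H$ with $H$ positive and self-adjoint with respect to $h_0$. The flow to run is
\begin{align*}
H^{-1}\dot H=-2\sqrt{-1}\,\Lambda_{\omega_0}\big(F(D_h)+[\theta,\theta^{\star_h}]\big),
\end{align*}
which is the downward gradient flow of the Donaldson functional $M(h_0,h)$ augmented by the Higgs contribution coming from $[\theta,\theta^{\star_h}]$. Short-time existence is immediate from standard parabolic theory, since the equation is strictly parabolic on the convex space of Hermitian metrics. For long-time existence I would derive a priori estimates: the trace part of $\log H$ is controlled by the maximum principle (degree zero annihilates the forcing term), while the traceless part is bounded by relating $\sup_{X_0}|\log H|$ to the value of the Donaldson functional via a Moser/Sobolev-type inequality, so that no finite-time blow-up occurs; higher-order bounds then follow from parabolic Schauder estimates.

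The hard part will be the convergence as $t\to\infty$, which is exactly where stability is used. Following Simpson's extension of the Donaldson--Uhlenbeck--Yau argument, if $\sup_{X_0}|\log H_t|$ failed to stay bounded one extracts, after rescaling, a weak limit producing a nontrivial weakly holomorphic subbundle $F\subset E$ of nonnegative degree. The crucial feature---and the place where the Higgs field genuinely enters---is that because $[\theta,\,\cdot\,]$ appears in the flow, this limiting subsheaf is forced to be $\theta$-invariant, hence a destabilizing Higgs subsheaf, contradicting stability. Making this $\theta$-invariance and the destabilization precise, via the appropriate Bochner/Weitzenb\"ock identity for the full operator $D_h+\theta+\theta^{\star_h}$, is the core of the argument; once it is in place the flow converges to a smooth solution of \eqref{HYM}, the desired harmonic metric.

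As an alternative to the heat flow, one could instead run the Uhlenbeck--Yau continuity method, solving the perturbed equation $\Lambda_{\omega_0}\big(F(D_h)+[\theta,\theta^{\star_h}]\big)=\varepsilon\,\log H$ for $\varepsilon>0$ and letting $\varepsilon\to 0$. The same stability mechanism---a would-be degeneration of the solutions must produce a $\theta$-invariant destabilizing subsheaf---supplies the uniform estimates needed to pass to the limit, so that this approach meets the identical main obstacle and resolves it in the same way.
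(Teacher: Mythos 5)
This statement is one the paper does not prove at all: it is quoted as background, attributed to Hitchin and Simpson, with the actual proofs living in \cite{Hit} and \cite{Simp92}. So there is no in-paper argument to compare against; the only meaningful question is whether your sketch is a faithful account of the standard proof. It is. The reduction of the polystable case to a direct sum of stable slope-zero summands, the contraction of the Hitchin--Yang--Mills equation by $\Lambda_{\omega_0}$ with Hermitian--Einstein constant zero, the Donaldson-type heat flow on $H=h_0^{-1}h$ as the gradient flow of the Higgs-augmented Donaldson functional, and the use of stability to rule out degeneration by producing a $\theta$-invariant destabilizing weakly holomorphic subsheaf --- this is precisely the architecture of Simpson's proof (and of Hitchin's in rank two over a surface).

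That said, be aware that what you have written is a roadmap, not a proof: the three steps you flag as ``the hard part'' are genuinely the entire content of the theorem, and each is a substantial piece of analysis. The $C^0$ bound $\sup_{X_0}|\log H_t|\le C_1+C_2\,M(h_0,h_t)$ is not a routine Moser iteration --- its proof already uses stability in Simpson's Proposition 5.3. The extraction of a destabilizing object from a diverging sequence $\log H_{t_i}/\|\log H_{t_i}\|$ requires the Uhlenbeck--Yau regularity statement that a weakly holomorphic projection (an $L^2_1$ endomorphism $\pi$ with $\pi^2=\pi=\pi^{\star}$ and $(1-\pi)\bar\partial\pi=0$) defines a coherent subsheaf, together with the verification $(1-\pi)\,\theta\,\pi=0$ for $\theta$-invariance; neither is supplied by a generic ``Bochner/Weitzenb\"ock identity.'' For the purposes of this paper none of this needs to be redone --- citing Hitchin and Simpson, as the authors do, is the correct move --- but if you intended your sketch as a self-contained proof, those deferred steps are where all the difficulty sits.
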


\subsection{Moduli spaces and correspondences} One may refer to \cite{Simp92} and \cite{Simp94I}.
\hfill

\begin{flushleft}$\mathcal{M}_\mathrm{B}(X_0)$: moduli space of semisimple representations of $\pi_1 (X_0)$ into $\SL$.\end{flushleft}

\hfill

\noindent
$\mathcal{M}_{\mathrm{dR}}(X_0)$: moduli space of semisimple $\SL$ flat vector bundles over $X_0$.

\hfill

\noindent
$\mathcal{M}_{\mathrm{Dol}}(X_0)$: moduli space of polystable $\SL$ Higgs bundles with vanishing Chern classes on $X_0$.

\hfill

\noindent
\textbf{Riemann-Hilbert correspondence}: we have the biholomorphic map
\begin{align*}
    \mathcal{M}_{\mathrm{B}}(X_0) & \to \mathcal{M}_{\mathrm{DR}}(X_0),\\ 
    [\rho:\pi_1(X_0)\to \SL] & \mapsto [\widetilde{X_0}\times_\rho \mathbb{C}^n],
\end{align*}
where $\widetilde{X_0}$ is the universal covering of $X_0$.

\hfill

\noindent
\textbf{Hitchin-Simpson correspondence}: we have the real analytic map 
\begin{align*}
   \Psi: \mathcal{M}_{\mathrm{DR}}(X_0) & \to \mathcal{M}_{\mathrm{Dol}}(X_0),\\ 
    [(E, D)] & \mapsto [(E, D_h^{0, 1}, \theta)].
\end{align*}

\subsection{Holomorphic tangent space of $\mathcal{M}_{\mathrm{Dol}}(X_0)$}\label{htdol}
Let $(E,\bar\p,\theta)\in \mathcal M_{\mathrm{Dol}}(X_0)$ be a smooth point. The holomorphic tangent space \( T_{(E, \bar{\partial}, \theta)} \mathcal{M}_{\mathrm{Dol}}(X_0) \) can be characterized by the hypercohomology $\Hy$. We shall use the \textbf{Dolbeault resolution} of the complex $\mathrm{End}E\stackrel{\tad}{\longrightarrow}\mathrm{End}E\otimes\Omega^1_{X_0}$ to compute this hypercohomology. The Dolbeault resolution is 
\[
\begin{tikzcd}
C^{0,1}:=\Eahol \arrow[r, "{\tad}"]                                                   & C^{1,1}:={\mathcal A^{0,1}(\mathrm{End} E \otimes \Omega_{X_0}^1)} \\
C^{0,0}:=\mathcal A^{0,0}(\mathrm{End} E) \arrow[r, "{\tad}"] \arrow[u, "\bar\partial"] & C^{1,0}:=\Ehol \arrow[u, "\bar\partial"]              
\end{tikzcd}
\]
Then we have the following truncated complex
\[C^{0,0} \xrightarrow{d^0} C^{1,0} \oplus C^{0,1} \xrightarrow{d^1}  C^{1,1},\]
where 
\begin{gather*}
    d^0(g) = ([g,\theta], \bar{\partial} g) \quad \text{for } \quad g \in C^{0,0},\\
    d^1(\varphi,\psi) = \bar{\partial} \varphi + [\psi,\theta] \quad \text{for } \quad (\varphi,\psi) \in C^{1,0} \oplus C^{0,1}.
\end{gather*}
Then we have the holomorphic tangent space
$$T_{(E, \bar{\partial}, \theta)} \mathcal{M}_{\mathrm{Dol}}(X_0)=\Hy=\frac{\mathrm{Ker\ }d^1}{\mathrm{Im\ }d^0}.$$

\subsection{Relative moduli spaces}
Let \( f: \mathcal{X} \to S \) be a smooth projective family of compact Riemann surfaces over \( S \) with center fiber $X_0$ over $0\in S$. Simpson constructed three relative moduli spaces in \cite{Simp94II}. They are the relative Betti moduli, the relative de Rham moduli, and the relative Dolbeault moduli: 

$$\mathcal{M}_\mathrm{B}(\mathcal{X}/S),\ \mathcal{M}_{\mathrm{DR}}(\mathcal{X}/S),\ \mathcal{M}_{\mathrm{Dol}}(\mathcal{X}/S),$$
whose fibers at $s\in S$ are
$$\mathcal{M}_\mathrm{B}(X_s),\ \mathcal{M}_{\mathrm{DR}}(X_s),\ \mathcal{M}_{\mathrm{Dol}}(X_s),\text{ respectively}.$$
By \cite[Proposition 7.8]{Simp94II}, we have the complex analytic homeomorphism \begin{align}\label{RH}\mathcal{M}_\mathrm{B}(\mathcal{X}/S)\cong \mathcal{M}_{\mathrm{DR}}(\mathcal{X}/S).\end{align}
And by the Hitchin-Simpson correspondence, we have the real analytic homeomorphism
$$ \Psi:\mathcal{M}_{\mathrm{DR}}(\mathcal{X}/S)\stackrel{\sim}{\longrightarrow}\Mdol.$$
\subsection{Isomonodromic deformation}\label{Isomd}
By Ehresmann's theorem, locally \( \mathcal{X}/S \) has a \( C^\infty \)-trivialization \( \mathcal{X}|_U \cong U \times X_0 \), where \( U \) is a small neighborhood of $0\in S$. Then for each \( s \in U \), this trivialization gives a diffeomorphism \( F_s : X_s \to X_0 \). To avoid the monodromy of the isomonodromic deformation, we shall assume $S$ is a germ of a positive dimension variety near $0$.

For \( \rho \in \mathcal{M}_\mathrm B(X_0) \), the pull back \( F_s^* \rho \in \mathcal{M}_\mathrm B(X_s) \), which gives a section of \( \mathcal{M}_B(\mathcal{X}/S) \to S \), called the \textbf{isomonodromic deformation} of \( \rho \) over \( S \), denoted by the section
$$\tau:S\to \mathcal{M}_B(\mathcal{X}/S).$$

The Riemann-Hilbert correspondence gives a biholomorphic map \( \mathcal{M}_\mathrm B(X_s) \cong\mathcal{M}_{\mathrm{DR}}(X_s) \). Under this, we get a section of \( \mathcal{M}_{\mathrm{DR}}(\mathcal{X}/S) \to S \), called the \textbf{isomonodromic deformation of a flat vector bundle} $(\mathcal V,D)\in\mathcal{M}_{\mathrm{DR}}(X_0) $, still denoted by $\tau:S\to \mathcal{M}_{\mathrm{DR}}(\mathcal X/S).$
\begin{definition}(a) Recall the Hitchin-Simpson correspondence $$ \Psi:\mathcal{M}_{\mathrm{DR}}(X_s) \to \mathcal{M}_{\mathrm{Dol}}(X_s).$$ Under this real analytic correspondence, we obtain a real analytic section $$\sigma:=\Psi\circ \tau:S\to \mathcal{M}_{\mathrm{Dol}}(\mathcal{X}/S),$$ called the \textbf{isomonodromic deformation of a Higgs bundle} $\sigma(0)\in\mathcal M_{\mathrm{Dol}}(X_0).$\\
(b) The \textbf{isomonodromic foliation} of $\Mdol$ is a real analytic foliation generated by all its isomonodromic sections.
\end{definition}
We remark that the isomonodromic deformation can be defined for any smooth projective family of relative dimension $\geq1$.
\subsection{Non-abelian Gauss-Manin connection and non-abelian Kodaira-Spencer map}\label{naGMnaKS}
Let $\pi:\mathcal{M}_{\mathrm{DR}}(\mathcal X/S)\to S$ be the relative de Rham moduli over $S$. We define the non-abelian Gauss-Manin connection on $\mathcal{M}_{\mathrm{DR}}(\mathcal X/S)$ as a horizontal lifting (see \cite{CT} and \cite{FS})
\begin{align*}\nabla_{\mathrm{GM}}: \pi^\ast TS&\to T\mathcal{M}_{\mathrm{DR}}(\mathcal X/S)\\
\{(E,D),\frac{\p}{\p t}\}&\mapsto\tau_\ast(\frac{\p}{\p t}),
\end{align*}
where $(E,D)\in\mathcal M_{\mathrm{DR}}(X_s)$, $\frac{\p}{\p t}\in T_sS$, and $\tau:S\to\mathcal{M}_{\mathrm{DR}}(\mathcal X/S)$ is the isomonodromic deformation of $(E,D)\in\mathcal M_{\mathrm{DR}}(X_s)$ near $s\in S.$ For simplicity, we write the above map as $\nabla_{\mathrm{GM}}:\frac{\p}{\p t}\mapsto\nabla_{\mathrm{GM},\frac{\p}{\p t}}$. We can define its curvature as 
$$F(\nabla_{\mathrm{GM}})(\frac{\p}{\p t},\frac{\p}{\p s}):=[\nabla_{\mathrm{GM},\frac{\p}{\p t}},\nabla_{\mathrm{GM},\frac{\p}{\p s}}]-\nabla_{\mathrm{GM},[\frac{\p}{\p t},\frac{\p}{\p s}]}.$$

By \eqref{RH}, $\nabla_{\mathrm{GM}}$ is a holomorphic connection. Additionally, $\nabla_{\mathrm{GM}}$ has zero curvature, meaning isomonodromic sections indeed define a holomorphic foliation on $\mathcal{M}_{\mathrm{DR}}(\mathcal X/S)$. Thus, $\mathcal{M}_{\mathrm{DR}}(\mathcal X/S)\to S$ is locally trivial over $S$. By \cite[Section 7]{Simp95}, this $\nabla_{GM}$ extends to $\mathcal M_{\mathrm{Del}}(\mathcal X/S)|_{S\times\mathbb G_m}$. Recall the non-abelian Kodaira-Spencer map $\Theta_{KS}$ defined in \eqref{Theta_ks} of section \ref{nabHT}. 

Now we provide a formula for $\theta_\ast$ using the Dolbeault representative (see \cite[Proposition 2.1]{Zuo} and \cite[Theorem 1.2]{FS}).
\begin{lemma}\label{thetaast}Let $\eta\in \mathcal A^{0,1}(T_{X_s})$ be a representative of $[\eta]\in H^1(X_s,T_{X_s})$. Then $$\theta_\ast([\eta])=[(0,\eta(\theta))]\in\mathbb H^1(X_s,(\mathrm{End}E,\mathrm{ad}(\theta))),$$
where $\eta(\theta)\in \mathcal A^{0,1}(\mathrm{End}E)$ is a contraction by vector fields.
\end{lemma}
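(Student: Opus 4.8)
The plan is to compute $\theta_\ast$ by passing to Dolbeault resolutions of the two two-term complexes and tracking a degree-one cocycle through the induced chain map. Recall that, by definition, $\theta_\ast$ is the map on first hypercohomology induced by the morphism of complexes $[\,T_{X_s}\to 0\,]\to[\,\mathrm{End}E\xrightarrow{\tad}\mathrm{End}E\otimes\Omega^1_{X_s}\,]$ whose degree-zero component is contraction with the Higgs field, $v\mapsto\theta(v)=\iota_v\theta$, and whose degree-one component is the zero map $0\to\mathrm{End}E\otimes\Omega^1_{X_s}$. I would resolve the target complex exactly as in Section \ref{htdol}, so that its total complex reads $C^{0,0}\xrightarrow{d^0}C^{1,0}\oplus C^{0,1}\xrightarrow{d^1}C^{1,1}$, and resolve the source complex $[\,T_{X_s}\to 0\,]$ by its Dolbeault double complex $\mathcal A^{0,\bullet}(T_{X_s})\to 0$. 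Since $X_s$ is a curve, this identifies the first hypercohomology of $[\,T_{X_s}\to 0\,]$ with $\mathcal A^{0,1}(T_{X_s})/\bar\partial\mathcal A^{0,0}(T_{X_s})=H^1(X_s,T_{X_s})$, and any $\eta\in\mathcal A^{0,1}(T_{X_s})$ is automatically $\bar\partial$-closed (there is no $\mathcal A^{0,2}$).

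Next I would lift $\theta$ to a morphism $f$ of these two Dolbeault double complexes: on the left column set $f^{0,0}(v)=\theta(v)$ and $f^{0,1}(\eta)=\eta(\theta)$, where $\eta(\theta)$ denotes contracting the $T^{1,0}$-component of $\eta$ against the one-form part of $\theta$ while keeping the $(0,1)$-form factor; and let $f$ vanish on the right column. The two compatibility conditions are then checked in a local coordinate $z$. Writing $\theta=A\,dz$ with $\bar\partial A=0$ and $v=v^z\partial_z$, the vertical compatibility $f^{0,1}(\bar\partial v)=\bar\partial(f^{0,0}v)$ reduces to the identity $(\partial_{\bar z}v^z)A\,d\bar z=\bar\partial(v^zA)$, which holds precisely because $\bar\partial\theta=0$; the horizontal compatibility amounts to $\tad(f^{0,0}v)=[\theta(v),\theta]=v^z[A,A]\,dz=0$, which holds because $[A,A]=0$ (equivalently $\theta\wedge\theta=0$).

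Finally I would push the cocycle through. In the source total complex the class $[\eta]$ is represented in total degree one by $(0,\eta)\in 0\oplus\mathcal A^{0,1}(T_{X_s})$, the first slot corresponding to the (vanishing) Dolbeault resolution of the degree-one term $0$. Its image under $f$ in the target total degree one $C^{1,0}\oplus C^{0,1}$ has zero $C^{1,0}$-component, since $f$ annihilates the right column, and $C^{0,1}$-component $f^{0,1}(\eta)=\eta(\theta)$. This yields $\theta_\ast([\eta])=[(0,\eta(\theta))]\in\mathbb H^1(X_s,(\mathrm{End}E,\mathrm{ad}(\theta)))$, which is the asserted formula; one checks as a sanity step that $(0,\eta(\theta))\in\ker d^1$, i.e. $\bar\partial(\eta(\theta))+[\eta(\theta),\theta]$ vanishes, again using $\bar\partial\theta=0$ and $[A,A]=0$.

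I expect the only genuine obstacle to be the bookkeeping of the chain map on the full double complex together with the sign conventions built into $d^0$, $d^1$, and the commutators $\tad$; once the local verification of the two compatibility conditions and of the cocycle condition is carried out carefully, the identification of $\theta_\ast([\eta])$ is a direct transcription.
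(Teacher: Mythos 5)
Your computation is correct and is the standard diagram chase: the paper states Lemma~\ref{thetaast} without any proof, merely pointing to \cite[Proposition 2.1]{Zuo} and \cite[Theorem 1.2]{FS}, and your argument (lifting the morphism of two-term complexes to the Dolbeault double complexes via $v\mapsto\iota_v\theta$, $\eta\mapsto\eta(\theta)$, checking compatibility with $\bar\partial$ using $\bar\partial\theta=0$ and with $\tad$ using $\theta\wedge\theta=0$, then pushing the representative $(0,\eta)$ through) supplies exactly the verification the paper delegates to those references. One cosmetic slip: with the paper's convention $d^1(\varphi,\psi)=\bar\partial\varphi+[\psi,\theta]$, the cocycle condition for $(0,\eta(\theta))$ reads simply $[\eta(\theta),\theta]=0$; the extra term $\bar\partial(\eta(\theta))$ you write lands in $\mathcal A^{0,2}(\mathrm{End}E)=0$ on a curve, so nothing is affected.
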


\subsection{Betti map and isomonodromic deformation}\label{Bettimap}
Assume our base $S$ is a germ of a variety. Let $\mathcal A\to S$ be an abelian scheme of relative dimension $g$. We can find a basis of relative holomorphic 1-forms of $\mathcal A\to S$, denoted by $\{\omega_1(s),\cdots,\omega_g(s)\}_{s\in S}$. Since all fibers $\mathcal A_s$ have the same topological type, we can take $\gamma_1,\cdots,\gamma_{2g}$ as the common generators set of their fundamental groups.
\begin{definition}[\cite{CMZ} and \cite{CGHX}] (a) For any $\xi\in \mathcal A_s$, its coordinates $\{\int_0^\xi\omega_i(s)\}_{i=1}^g$ are well defined modulo $\bigoplus\limits_{j=1}^{2g}\mathbb Z\{
\int_{\gamma_j}\omega_i(s)\}_{i=1}^g $, and clearly we have $(b_1(\xi),\cdots,b_{2g}(\xi))\in \mathbb R^{2g}/\mathbb Z^{2g}$, such that $\int_0^\xi=\sum_{j=1}^{2g}b_j(\xi)\int_{\gamma_j}.$ We define the \textbf{Betti map} by 
\begin{align*}b:\mathcal A&\to\mathbb R^{2g}/\mathbb Z^{2g}\\
\xi&\mapsto(b_1(\xi),\cdots,b_{2g}(\xi)),\end{align*}
which is a real analytic map. \\
(b) We say a section $\alpha:S\to \mathcal A$ is a level set of $b$ if $b(\alpha(s))$ is a constant map. Through any $\xi\in\mathcal A_s$, there is a unique level set of $b$ passing through $\xi$. Thus all level sets of $b$ define a real analytic foliation on $\mathcal A$, called the \textbf{Betti foliation}.
\end{definition}
Since $\mathbb R/\mathbb Z\cong \mathrm{U}(1)$, for each $\xi\in \mathcal A_s$, $b(\xi)$ determines an element $\rho_\xi$ in $\mathrm{Hom}(\pi_1(\mathcal A_s),\mathrm{U}(1))=\mathcal M_{\mathrm B}(\mathcal A_s,\mathrm U(1))$ by mapping 
$$\gamma_j\to b_j(\xi),\ j=1,2,\cdots,2g.$$
So we have the following real analytic homeomorphism 
\begin{equation}\label{Bet}\begin{aligned}\mathcal B:\mathcal A&\to\mathcal M_{\mathrm B}(\mathcal A/S,\mathrm{U}(1))\\
\xi&\mapsto\rho_\xi.\end{aligned}\end{equation}
By fixing the generators of $\pi_1(\mathcal A_s),\ \text{for any } s\in S$, $\mathcal M_{\mathrm B}(\mathcal A/S,\mathrm{U}(1))$ is real analytically homeomorphic to $S\times \mathbb R^{2g}/\mathbb Z^{2g}$. Then $b$ and $\mathcal B$ are equivalent via this trivialization.\\
\textbf{Observation:} By the above definition, if $\alpha:S\to\mathcal A$ is a level set of $b$, then $\rho_{\alpha(\cdot)}:S\to\mathcal  M_{\mathrm B}(\mathcal A/S,\mathrm{U}(1))$ is an isomonodromic section. Conversely, any isomonodromic section of $\mathcal M_{\mathrm B}(\mathcal A/S,\mathrm{U}(1))$ defines a level set of the Betti map $b$.

Therefore, the isomonodromic deformation in the relative Betti moduli is a generalization of the level set of the Betti map. 

If $\mathcal X/S$ is a smooth family of compact Riemann surfaces of genus $g\geq2$, then we consider the abelian scheme $\mathrm{Jac}(\mathcal X/S)\to S$. In this case, the map 
$$\mathcal B:\mathrm{Jac}(\mathcal X/S)\to \mathcal M_{\mathrm B}(\mathrm{Jac}(\mathcal X/S)/S,\mathrm{U}(1))=\mathcal M_{\mathrm B}(\mathcal X/S,\mathrm{U}(1))$$ defined in \eqref{Bet} is simply the Hitchin-Simpson correspondence restricted to 
$$\mathrm{Jac}(\mathcal X/S)=\mathcal M_{\mathrm {Dol}}(\mathcal X/S,\mathrm{U}(1))\subsetneq \Mdol.$$

Therefore, a level set of the Betti map $b$ on $\mathrm{Jac}(\mathcal X/S)$ is exactly an isomonodromic section of $\mathcal M_{\mathrm {Dol}}(\mathcal X/S,\mathrm{U}(1))$. The Betti foliation coincides with the isomonodromic foliation on $\mathcal M_{\mathrm {Dol}}(\mathcal X/S,\mathrm{U}(1))$.
\section{Deformation theory for a Higgs bundle}\label{DT}
Let $f:\mathcal X\to S$ be a smooth projective family of compact Riemann surfaces with center fiber $X_0:=f^{-1}(0)$, for $0\in S$. We consider a real analytic family of Higgs bundles over $\mathcal X$ such that for each $s\in S$, this family restricted to $X_s$ is a Higgs bundle on $X_s$.

We have the relative Dolbeault moduli space $p:\Mdol\to S$. We may view a real analytic (or holomorphic) family of Higgs bundles as a real analytic (or holomorphic) section $\sigma: S\to\Mdol$ such that $p\circ\sigma=\mathrm {id}_S$. In the following, we always assume $$\sigma(0)=(E,\bar\p,\theta)\in \Mdol|_0=\mathcal M_{\mathrm{Dol}}(X_0).$$

\begin{lemma}\label{realsec}
When $\sigma: S\to\Mdol$ is a real analytic section, we consider its holomorphic and anti-holomorphic derivative along $v\in T_0S\ (=T_0^{1,0}S)$, i.e.,
$$\sigma_\ast(v)=w^{1,0}\oplus w^{0,1}\in T^{1,0}_{\sigma(0)}\Mdol\oplus T^{0,1}_{\sigma(0)}\Mdol.$$
Then $$p_\ast(w^{1,0})=v\text{ and }p_\ast(w^{0,1})=0.$$ In other words, the anti-holomorphic derivative of $\sigma$ along $v$ factors through $T_{\sigma(0)}^{0,1}\mathcal M_{\mathrm{Dol}}(X_0)$. Clearly, when $w^{0,1}=0$, $\sigma$ is holomorphic along $v\in T_0S$.
\end{lemma}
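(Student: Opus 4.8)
The plan is to deduce the statement entirely from two structural facts, with no analysis of the PDEs or of the Hitchin--Simpson correspondence needed: first, that the projection $p:\Mdol\to S$ is a holomorphic map, so that its complexified differential $p_\ast$ is type-preserving; and second, that the section identity $p\circ\sigma=\mathrm{id}_S$ forces $p_\ast\circ\sigma_\ast=\mathrm{id}$. Throughout I work at the smooth point $\sigma(0)$, where the complexified tangent space splits as $T_{\mathbb C}=T^{1,0}\oplus T^{0,1}$ and where, by definition, the holomorphic and anti-holomorphic derivatives of the real analytic map $\sigma$ are the two components $w^{1,0}$ and $w^{0,1}$ of $\sigma_\ast(v)$.

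First I would record that, since $p$ is holomorphic, one has $\bar\p p=0$, and hence $p_\ast$ carries $(1,0)$-vectors to $(1,0)$-vectors and $(0,1)$-vectors to $(0,1)$-vectors; in particular $p_\ast(w^{1,0})\in T^{1,0}_0S$ and $p_\ast(w^{0,1})\in T^{0,1}_0S$. Next I would differentiate the relation $p\circ\sigma=\mathrm{id}_S$: by the chain rule for smooth maps and complexification, $p_\ast(\sigma_\ast(v))=v$ for every $v\in T_0S=T^{1,0}_0S$. Substituting $\sigma_\ast(v)=w^{1,0}+w^{0,1}$ gives $p_\ast(w^{1,0})+p_\ast(w^{0,1})=v$, where by the previous step the two summands lie in $T^{1,0}_0S$ and $T^{0,1}_0S$ respectively, while $v$ is purely of type $(1,0)$. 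Matching types through the uniqueness of the splitting $T_{\mathbb C,0}S=T^{1,0}_0S\oplus T^{0,1}_0S$ then forces $p_\ast(w^{1,0})=v$ and $p_\ast(w^{0,1})=0$, which is the main assertion.

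Finally, $p_\ast(w^{0,1})=0$ places $w^{0,1}$ in $\ker p_\ast$; since the fiber $p^{-1}(0)=\mathcal M_{\mathrm{Dol}}(X_0)$ is a complex-analytic subspace, the $(0,1)$-part of its tangent space at $\sigma(0)$ is exactly $T^{0,1}_{\sigma(0)}\mathcal M_{\mathrm{Dol}}(X_0)=\ker p_\ast\cap T^{0,1}_{\sigma(0)}\Mdol$, so the anti-holomorphic derivative factors through $T^{0,1}_{\sigma(0)}\mathcal M_{\mathrm{Dol}}(X_0)$. The closing remark is then immediate: $\sigma$ is holomorphic along $v$ exactly when its anti-holomorphic derivative vanishes, i.e.\ when $w^{0,1}=0$.

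I do not expect a substantive obstacle here, as the argument is in essence a type-chase driven by the holomorphicity of $p$. The only point demanding genuine care is foundational rather than computational: all tangent spaces and the type decomposition must be taken at the smooth point $\sigma(0)$ of the possibly singular space $\Mdol$, in accordance with the convention of the setup, and one must know that $p$ is genuinely holomorphic as part of Simpson's relative construction of $\Mdol\to S$. Once these are granted, the conclusion follows formally.
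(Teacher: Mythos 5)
Your argument is correct and coincides with the paper's own proof: both use that $p$ is holomorphic, hence $p_\ast$ preserves types, then differentiate $p\circ\sigma=\mathrm{id}_S$ and match $(1,0)$ and $(0,1)$ components. No issues.
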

\begin{proof} Since $p$ is holomorphic, $p_\ast$ maps $T^{1,0}_{\sigma(0)}\Mdol$ to $T^{1,0}_0S$ and $T^{0,1}_{\sigma(0)}\Mdol$ to $T^{0,1}_0S$. Thus $$(p\circ \sigma)_\ast(v)=p_\ast(w^{1,0})+p_\ast(w^{0,1})\in T^{1,0}_0S\oplus T^{0,1}_0 S.$$ Also, $(p\circ \sigma)_\ast(v)=(\mathrm {id}_S)_\ast(v)=v\in T^{1,0}S$. So $p_\ast(w^{0,1})=0$.
\end{proof}

Later, we shall provide explicit deformation classes for $w^{1,0}$ and $w^{0,1}$, respectively.
\subsection{Deformation theory for a compact Riemann surface}\label{Rie}
In this section, we briefly review some basic deformation theory. Let $f:\mathcal X\to S$ be a family as above with central fiber $X_0:=f^{-1}(0)$, for $0\in S$. Then we have the Kodaira-Spencer map
\begin{align}\label{KS}
\rho_{KS}: T_0S\to H^1(X_0,T_{X_0})
\end{align}

We use the Dolbeault cohomology to represent any class $[\eta]\in H^1(X_0,T_{X_0})$ as follows:
$$H^1(X_0,T_{X_0})=\frac{\mathcal A^{0,1}(T_{X_0})}{\bar\p \mathcal A^{0,0}(T_{X_0})}.$$

Let $\frac{\p}{\p t}\in T_0S$ such that $\rho_{KS}(\frac{\p}{\p t})=[\eta]$.
We may pick some $\eta\in \mathcal A^{0,1}(T_{X_0})$ to represent $[\eta]\in H^1(X_0,T_{X_0})$. 

Now we assume $X_t:=f^{-1}(t)$ is sufficiently close to the central fiber with the complex structure $\mu_t:=t\eta+O(t^2)\in \mathcal A^{0,1}(T_{X_0})$ on $X_t$. We view $X_t$ as the differential manifold $X_0$ endowed with the complex structure $\mu_t$. Then the identity map $$\mathrm{id}:X_0\to (X_0,\mu_t)$$
is a diffeomorphism.

We let $dz$ be a local $(1,0)$ frame on $X_0$ and $d\bar z$ its conjugate. Then $$\om_t:=dz-t\eta(dz)+O(t^2)$$ is a local $(1,0)$ frame for $X_t$, where $\eta(dz)\in\mathcal A^{0,1}(X_0)$ is the contraction.

\subsection{Deformation class of a holomorphic family of Higgs bundles}

In this section, we assume $\sigma: S\to\Mdol$ is a holomorphic section and $\sigma(0)=(E,\bar\p,\theta)\in \mathcal M_{\mathrm{Dol}}(X_0)$. 

We will use the theory of the Atiyah bundle to provide the first-order deformation class of a Higgs bundle in $\Mdol$. One may refer to \cite{Ati} for the definition of the Atiyah class. In \cite[Proposition 4.2.1]{CT}, the author used this theory to describe the deformation class of a triple $(X_0,P,\nabla)$, where $P$ is a $G$-principal bundle on $X_0$ with a holomorphic connection $\nabla$.

For the initial Higgs bundle $(E,\bar\p,\theta)\in \mathcal M_{\mathrm{Dol}}(X_0)$, let $P\stackrel{j}{\to}X_0$ be its frame bundle, which is an $\SL$-principal bundle. Then we have the following short exact sequence $$0\to T_{P/X}\to T_P\to j^*T_{X_0}\to 0.$$
$G:=\SL$ acts on both tangent spaces. After taking a quotient by $G$, we have 
\begin{align}\label{at} 0\to \mathrm{ad}P\to \At\to T_{X_0}\to 0.\end{align}
We remark that $\mathrm{ad}P=\mathrm{End}E$ and $\At\cong \mathrm{ad}P\oplus T_{X_0}$ as a smooth vector bundle. 
An Atiyah class is an extension class $$a(E)\in\mathrm{Ext}^1(T_{X_0},\mathrm{End}E)\cong H^1(X,\mathrm{End}E\otimes \Omega_{X_0}^1).$$ 

Let $h_0$ be the harmonic metric for $E$ and $\Ch$ be the Chern connection with $\Ch=\Ch^{1,0}+\bar\p$. In our case, by \cite[Proposition 4]{Ati}, we may take 
$$a(E):=-\theta\wedge\theta^{\star_{h_0}}-\theta^{\star_{h_0}}\wedge\theta,$$
which is equal to $F(\Ch)$ by the Hitchin-Yang-Mills equation \eqref{HYM}.
This $a(E)$ defines a complex structure on $\At$ by $$\bar\p_{\At}:=\begin{pmatrix}\bar\p_{\mathrm{End}E} & a(E)\\ 0 &\bar\p_{T_{X_0}}\end{pmatrix}.$$

Let $\theta\lrcorner (\cdot):\mathcal A^{p,q}(T_{X_0})\to \mathcal A^{p,q}(\mathrm{End}E)$ be the contraction map. The operator $\Ch^{1,0}(\theta\lrcorner)$ for any smooth (local) section $\tau$ of $T_{X_0}\otimes\mathcal A^{p,q}(X_0)$ is defined by first performing the contraction $\theta\lrcorner\tau$ and then taking $\Ch^{1,0}(\theta\lrcorner \tau).$

\begin{lemma}\label{2termcx} We have the following 2-term complex 
\begin{align}\label{2-tcx}
\At\xrightarrow{\tad\oplus \Ch^{1,0}(\theta\lrcorner)}\mathrm{End}E\otimes\Omega_{X_0}^1,
\end{align}
which gives the following extension of two deformation complexes
\[\begin{tikzcd}[column sep=2cm]\mathrm{End}E\arrow[r,"\tad"]\arrow[d]&\mathrm{End}E\otimes\Omega_{X_0}^1\arrow[d,equals]\\
\At \arrow[r,"\tad\oplus \Ch^{1,0}(\theta\lrcorner)"]\arrow[d]& \mathrm{End}E\otimes\Omega_{X_0}^1\arrow[d]\\T_{X_0}\arrow[r,"0"]&0
\end{tikzcd}\]
\end{lemma}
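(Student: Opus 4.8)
The plan is to reduce the lemma to a single local computation on $X_0$ --- that the indicated arrow sends holomorphic sections to holomorphic sections --- after which the short exact sequence of complexes displayed in the statement is essentially formal. Throughout I work in the $C^\infty$ splitting $\At\cong\mathrm{End}E\oplus T_{X_0}$ determined by the Chern connection $\Ch$ of the harmonic metric $h_0$, under which the holomorphic structure of the Atiyah bundle is $\bar\p_{\At}=\left(\begin{smallmatrix}\bar\p_{\mathrm{End}E}&a(E)\\0&\bar\p_{T_{X_0}}\end{smallmatrix}\right)$ with $a(E)=F(\Ch)$ by the Hitchin--Yang--Mills equation \eqref{HYM}. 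Write a smooth section as a pair $(s,v)$ with $s\in\mathcal A^{0}(\mathrm{End}E)$, $v\in\mathcal A^{0}(T_{X_0})$; it is holomorphic exactly when $\bar\p_{T_{X_0}}v=0$ and $\bar\p s=-a(E)(v)=-(v\lrcorner F(\Ch))$. On this splitting the arrow of \eqref{2-tcx} reads $\Phi(s,v)=[\theta,s]+\Ch^{1,0}(\theta\lrcorner v)$, which is a genuine first-order differential operator (the term $\Ch^{1,0}(\theta\lrcorner\,\cdot\,)$ is not $\mathcal O$-linear). Thus \eqref{2-tcx} is a two-term complex of holomorphic sheaves with $\mathbb C$-linear differential, in the same sense as the de Rham complex, and there is no $d^2=0$ condition to verify.

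The heart of the matter is to check that $\bar\p\bigl(\Phi(s,v)\bigr)=0$ whenever $(s,v)$ is holomorphic. I would treat the two summands separately. For the first, using $\bar\p\theta=0$ (the Higgs condition) and $\bar\p s=-(v\lrcorner F(\Ch))$, the Leibniz rule gives $\bar\p[\theta,s]=-[\theta,\bar\p s]=[\theta,\,v\lrcorner F(\Ch)]$. For the second, the Chern curvature identity $\bar\p\,\Ch^{1,0}+\Ch^{1,0}\bar\p=F(\Ch)=[F(\Ch),-]$ on $\mathcal A^0(\mathrm{End}E)$, together with $\bar\p(\theta\lrcorner v)=0$ (immediate from $\bar\p\theta=0$ and $\bar\p v=0$), yields $\bar\p\bigl(\Ch^{1,0}(\theta\lrcorner v)\bigr)=[F(\Ch),\,\theta\lrcorner v]$. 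It then remains to observe the pointwise identity $[\theta,\,v\lrcorner F(\Ch)]+[F(\Ch),\,\theta\lrcorner v]=0$, which holds because $\dim_{\mathbb C}X_0=1$: writing $\theta=\theta_z\,dz$ and $F(\Ch)=F_{z\bar z}\,dz\wedge d\bar z$ in a local coordinate, both graded brackets equal $v^z[\theta_z,F_{z\bar z}]\,dz\wedge d\bar z$ up to sign and cancel (equivalently $\theta\wedge F(\Ch)=0$ as a $(2,1)$-form). Hence $\bar\p\Phi(s,v)=0$, which is precisely the assertion that \eqref{2-tcx} is a well-defined complex of holomorphic sheaves.

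Finally, for the displayed diagram I would check it is a short exact sequence of two-term complexes. In degree $0$ the left-hand column is the Atiyah sequence \eqref{at}, which is exact; in degree $1$ it is the identity sequence $0\to\mathrm{End}E\otimes\Omega^1_{X_0}\xrightarrow{=}\mathrm{End}E\otimes\Omega^1_{X_0}\to0$, also exact. The top square commutes because the inclusion $\mathrm{End}E\hookrightarrow\At$ is $s\mapsto(s,0)$ and $\Phi(s,0)=[\theta,s]=\tad(s)$, so the top composite agrees with $\tad$ followed by the identity; the bottom square commutes trivially, its lower-right corner being $0$. This exhibits $[\,\At\xrightarrow{\Phi}\mathrm{End}E\otimes\Omega^1_{X_0}\,]$ as an extension of the Higgs deformation complex $[\,\mathrm{End}E\xrightarrow{\tad}\mathrm{End}E\otimes\Omega^1_{X_0}\,]$ by $[\,T_{X_0}\xrightarrow{0}0\,]$, as claimed. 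The main obstacle is the computation in the second paragraph: the two a priori nonzero terms $\bar\p[\theta,s]$ and $\bar\p(\Ch^{1,0}(\theta\lrcorner v))$ must cancel, and they do so only because the Atiyah class is the Chern curvature $F(\Ch)=-[\theta,\theta^{\star_{h_0}}]$ via \eqref{HYM} (so the same tensor governs both the holomorphic structure of $\At$ and the defect of $\Ch^{1,0}$), and only because $\dim_{\mathbb C}X_0=1$ forces $\theta\wedge F(\Ch)=0$. The sign of $a(E)$ is essential here, since the opposite sign would double the two contributions rather than annihilate them.
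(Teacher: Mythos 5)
Your proof is correct and follows essentially the same route as the paper: both reduce the lemma to the compatibility of the first-order operator $\tad\oplus \Ch^{1,0}(\theta\lrcorner)$ with the holomorphic structures, using the identification $a(E)=F(\Ch)$ coming from the Hitchin--Yang--Mills equation \eqref{HYM} together with the curvature identity $\bar\p\,\Ch^{1,0}+\Ch^{1,0}\bar\p=[F(\Ch),-]$, and both then note that the short exact sequence of complexes is formal (degree $0$ is the Atiyah sequence \eqref{at}, degree $1$ is the identity). The paper phrases the check as commutativity of the Dolbeault-resolution square \eqref{reso}, while you check directly that holomorphic sections of $\At$ go to holomorphic sections; restricted to $\bar\p_{\At}$-closed degree-zero elements these are the same local computation. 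The one substantive difference is how the two curvature terms are disposed of: the paper asserts that $\tad\circ a(E)$ and $[F(\Ch),\theta\lrcorner\tau]$ each vanish identically, whereas you observe that neither need vanish on its own --- locally each equals $\pm\,v^z[\theta_z,F_{z\bar z}]\,dz\wedge d\bar z$ --- and that only their combination is zero, forced by $\dim_{\mathbb C}X_0=1$. Your version of this step is the more careful one (for Hitchin's uniformizing Higgs bundle one has $[\theta,[\theta,\theta^{\star_{h_0}}]]\neq 0$, so the individual terms are genuinely nonzero there), and it also explains why the sign of $a(E)$ is forced, which term-by-term vanishing would obscure. Your explicit verification of the two commuting squares in the displayed diagram fills in what the paper dismisses as clear by definition.
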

\begin{proof} As a smooth map, $\tad\oplus \Ch^{1,0}(\theta\lrcorner):\At\cong  \mathrm{End}E\oplus T_{X_0}\to \mathrm{End}E\otimes\Omega_{X_0}^1$ is well-defined. We verify it is indeed holomorphic, i.e., the following diagram commutes:
\begin{equation}\label{reso}\begin{tikzcd}[column sep=2cm]
\mathcal A^{0,1}(\At)\arrow[r,"\tad\oplus (-\Ch^{1,0}(\theta\lrcorner))"] & \mathcal A^{0,1}(\mathrm{End}E\otimes\Omega_{X_0}^1) \\ \mathcal A^{0,0}(\At)\arrow[r,"\tad\oplus \Ch^{1,0}(\theta\lrcorner)"]\arrow[u,"\bar\p_{\At}"]&\mathcal A^{0,0}(\mathrm{End}E\otimes\Omega_{X_0}^1)\arrow[u,"\bar\p"]
\end{tikzcd}\end{equation}
For any $(g,\tau)\in \mathcal A^{0,0}(\At)=\mathcal A^{0,0}(\mathrm{End}E)\oplus \mathcal A^{0,0}(T_{X_0}),$ we have 
\begin{align*}\{\tad\oplus (-\Ch^{1,0}(\theta\lrcorner))\}\circ\bar\p_{\At}(g,\tau)=&\tad(\bar\p g)-\Ch^{1,0}(\theta\lrcorner(\bar\p\tau))+\tad\circ a(E)\tau;\\
\bar\p\circ\{\tad\oplus \Ch^{1,0}(\theta\lrcorner)\}(g,\tau)=&\bar\p(\tad g)+\bar\p\Ch^{1,0}(\theta\lrcorner\tau).
\end{align*}
Note that $\tad\circ a(E)\equiv0$ and $$\Ch^{1,0}(\theta\lrcorner(\bar\p\tau))+\bar\p\Ch^{1,0}(\theta\lrcorner\tau)=[F(\Ch),\theta\lrcorner\tau]=0.$$
We have the desired holomorphic property. The rest is clear by definition.
\end{proof}
Now we present the main result of this section.
\begin{proposition}\label{holtan}Under the assumption $T_0S=H^1(X_0,T_{X_0})$, the hypercohomology group $\mathbb H^1(\At, \tad\oplus \Ch^{1,0}(\theta\lrcorner))$ of \eqref{2-tcx} gives the first order deformation class of $(E,\bar\p,\theta)\in\Mdol$, i.e., \begin{align}\label{Holtan}T_{(E,\bar\p,\theta)}\Mdol=\mathbb H^1(\At, \tad\oplus \Ch^{1,0}(\theta\lrcorner)).\end{align} We also have the exact sequence
\begin{align}\label{shortH}0\to \Hy\to \mathbb H^1(\At, \tad\oplus \Ch^{1,0}(\theta\lrcorner))\stackrel{j_\ast}{\rightarrow}H^1(X_0,T_{X_0})\to 0
.\end{align}
\end{proposition}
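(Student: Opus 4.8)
The plan is to obtain the exact sequence \eqref{shortH} from the hypercohomology long exact sequence of the short exact sequence of complexes in Lemma \ref{2termcx}, and then to upgrade it to the identification \eqref{Holtan} by comparing with the tangent sequence of the fibration $p:\Mdol\to S$. The conceptual input for \eqref{Holtan} is a deformation-theoretic reading of the hypercohomology: a class of the complex \eqref{2-tcx} encodes, through the anchor $\At\to T_{X_0}$, an infinitesimal deformation $[\eta]\in H^1(X_0,T_{X_0})$ of the complex structure of $X_0$ together with a compatible first-order variation of the holomorphic and Higgs structure on $E$, the vertical summand $\mathrm{ad}P=\End E$ of \eqref{at} accounting for the gauge freedom and the target $\End E\otimes\Omega^1_{X_0}$ recording the motion of $\theta$. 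This produces a natural map $\mathbb H^1(\At,\tad\oplus\Ch^{1,0}(\theta\lrcorner))\to T_{(E,\bar\p,\theta)}\Mdol$ sending a cocycle to the first-order deformation of the relative Higgs bundle it represents; it is the Dolbeault counterpart of the de Rham computation of \cite[Proposition 4.2.1]{CT}.

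For \eqref{shortH} I would write the short exact sequence of Lemma \ref{2termcx} as $0\to C_1^\bullet\to C_2^\bullet\to C_3^\bullet\to 0$ with $C_1^\bullet=[\End E\xrightarrow{\tad}\End E\otimes\Omega^1_{X_0}]$, $C_2^\bullet=[\At\to\End E\otimes\Omega^1_{X_0}]$ and $C_3^\bullet=[T_{X_0}\xrightarrow{0}0]$, and take the associated long exact sequence
\[
H^0(X_0,T_{X_0})\xrightarrow{\delta}\Hy\to\mathbb H^1(\At,\tad\oplus\Ch^{1,0}(\theta\lrcorner))\xrightarrow{j_\ast}H^1(X_0,T_{X_0})\xrightarrow{\delta}\mathbb H^2(C_1^\bullet).
\]
Since $g\geq 2$ one has $H^0(X_0,T_{X_0})=0$, so the left map is injective. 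For surjectivity of $j_\ast$ I would show $\mathbb H^2(C_1^\bullet)=0$: Serre duality on $X_0$ together with the self-duality of $\End E$ under the trace pairing and $\Omega^1_{X_0}=K_{X_0}$ identifies $C_1^\bullet$ with its own dual up to a shift, whence $\mathbb H^2(C_1^\bullet)\cong\mathbb H^0(C_1^\bullet)^\ast$; and $\mathbb H^0(C_1^\bullet)$ consists of the trace-free holomorphic endomorphisms commuting with $\theta$, which vanishes because $(E,\bar\p,\theta)$ is a stable $\SL$-Higgs bundle. This yields \eqref{shortH}.

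To pass from \eqref{shortH} to \eqref{Holtan}, note that the fibration $p$ has tangent sequence $0\to T\mathcal M_{\mathrm{Dol}}(X_0)\to T\Mdol\xrightarrow{p_\ast}T_0S\to 0$, exact because $p$ admits a holomorphic section through $(E,\bar\p,\theta)$ and the central fibre is smooth there; under the hypothesis $T_0S=H^1(X_0,T_{X_0})$ this is an extension of the same sub- and quotient terms as \eqref{shortH}, with the identical inclusion of $\mathbb H^1(C_1^\bullet)=T_{(E,\bar\p,\theta)}\mathcal M_{\mathrm{Dol}}(X_0)$ on the left and the Kodaira-Spencer projection on the right. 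The natural map of the first paragraph intertwines the two sequences and is the identity on the sub- and quotient terms, so the five lemma gives the isomorphism \eqref{Holtan}. I expect the genuine difficulty to lie in that first step, namely establishing that $\mathbb H^1(\At,\tad\oplus\Ch^{1,0}(\theta\lrcorner))$ really computes the relative first-order deformations, i.e.\ that the coupling between a deformation of the base curve and a deformation of the Higgs field is governed precisely by the Atiyah extension with the transcendental, metric-dependent differential $\Ch^{1,0}(\theta\lrcorner)$, rather than merely being a complex of the correct cohomological dimensions.
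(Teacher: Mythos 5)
Your treatment of \eqref{shortH} matches the paper's: both take the long exact hypercohomology sequence of the short exact sequence of complexes in Lemma \ref{2termcx} and use $H^0(X_0,T_{X_0})=0$ together with the vanishing of $\mathbb H^2(X_0,(\End E,\tad))$; your Serre-duality justification of the latter is a reasonable supplement (the paper simply asserts it), though you should phrase it at smooth, hence stable, points rather than assuming stability outright.

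For \eqref{Holtan}, however, there is a genuine gap, and you have in fact named it yourself in your closing sentence. The entire content of the proposition is the verification that the cocycle condition $D^1(\varphi,\psi,\eta)=0$, i.e.\ $\bar\p\varphi+[\psi,\theta]=\Ch^{1,0}(\eta(\theta))$, is precisely the first-order expansion of the integrability condition $\bar\p_t\theta_t=0$ for a Higgs bundle on the deformed curve $X_t=(X_0,t\eta+O(t^2))$, and that the image of $D^0$ is precisely the first-order effect of gauge transformations of $E$ together with diffeomorphisms of $X_0$ isotopic to the identity. This is where the metric-dependent differential $\Ch^{1,0}(\theta\lrcorner)$ actually enters: one must expand $\bar\p_t(B\omega_t)$ using the deformed $(1,0)$-frame $\omega_t=dz-t\eta(dz)+O(t^2)$ and the deformed $\bar\p_t$-operator, and observe that the term $\bar\p_t\omega_t$ produces exactly $-t\,\Ch^{1,0}(\eta(\theta))$ up to the curvature identity $\bar\p\Ch^{1,0}(\theta\lrcorner\tau)+\Ch^{1,0}(\theta\lrcorner\bar\p\tau)=[F(\Ch),\theta\lrcorner\tau]=0$. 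Your proposal asserts the existence of "a natural map sending a cocycle to the first-order deformation it represents" and then invokes the five lemma, but without the computation above you have not shown that such a map exists, is well defined on cohomology, or restricts to the identity on the sub- and quotient terms; the five lemma cannot substitute for it. A secondary flaw: your exactness of the tangent sequence of $p$ is justified by "$p$ admits a holomorphic section through $(E,\bar\p,\theta)$", which is not available --- the isomonodromic section is only real analytic, and the surjectivity of $p_\ast$ on holomorphic tangent spaces is itself a consequence of the unobstructedness encoded in $\mathbb H^2(C_1^\bullet)=0$, i.e.\ of the very identification you are trying to prove.
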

\begin{proof}Consider the truncated complex of \eqref{reso}, we have
\begin{align*}\mathcal A^{0,0}(\At)\xrightarrow{D^0}\mathcal A^{0,1}(\At)\oplus \mathcal A^{0,0}(\mathrm{End}E\otimes\Omega^1_{X_0})\xrightarrow{D^1}\mathcal A^{0,1}(\mathrm{End}E\otimes\Omega^1_{X_0}),\\
\text{where\ } D^0=(\bar\p_{\At},\tad\oplus \Ch^{1,0}(\theta\lrcorner));\ D^1:=\{\tad\oplus (-\Ch^{1,0}(\theta\lrcorner))\}\oplus\bar\p.
\end{align*}
This truncated complex \textbf{computes} the hypercohomology group in \eqref{Holtan}. So any class in $\mathbb H^1(\At, \tad\oplus \Ch^{1,0}(\theta\lrcorner))$ can be represented by a $D^1$-closed cocycle \begin{align*}(\varphi,\psi,\eta)\in &\Ehol\oplus\Eahol\oplus \mathcal A^{0,1}(T_{X_0})\\&=\mathcal A^{0,0}(\mathrm{End}E\otimes\Omega^1_{X_0})\oplus\mathcal A^{0,1}(\At),\end{align*}
denoted by $[(\varphi,\psi,\eta)]\in \mathbb H^1(\At, \tad\oplus \Ch^{1,0}(\theta\lrcorner)).$

We give an elementary proof of \eqref{Holtan}. 

For any $t\in S$, let $(E,\bar\p_t,\theta_t)$ be the Higgs bundle given by $\sigma(t)$ over $X_t$. As in the previous section, $\mathrm{id}:X_0\to X_t$ is a diffeomorphism, and we use it to pull back $(E,\bar\p_t,\theta_t)$ to a triple on $X_0$. One tries to compare the pull-back triple $(E,\bar\p_t,\theta_t)$ with $(E,\bar\p,\theta)$ on $X_0$.\\
\textbf{Claim:} There exists $(\varphi,\psi,\eta)\in \Ehol\oplus\Eahol\oplus \mathcal A^{0,1}(T_{X_0})$ such that
\begin{align*}X_t\text{ has } &t\eta+O(t^2)\in \mathcal A^{0,1}(T^{1,0}_{X_0})\text{ as its complex structure};\\
\bar\p_t=&\bar\p+t\eta\circ\Ch^{1,0}-\bar t\bar\eta\circ \bar\p+t\psi+O(|t|^2);\\
\theta_t=&\theta-t\eta(\theta)+t\varphi+O(|t|^2).
\end{align*}

Note that for any $h\in C^\infty(X_0)$, the $\bar\p_t$ operator for $X_t$ is given by
$$\bar\p_t h=\bar\p h+t\eta\circ\p h-\bar t\bar\eta\circ\bar\p h+O(|t|^2).$$

Thus for $\bar\p_t$ of $E$ on $X_t$, we have $\bar\p_t=\bar\p+t\eta\circ\Ch^{(1,0)}-\bar t\bar\eta\circ \bar\p+t\psi+O(|t|^2)$ for some $\psi$ being a smooth (0,1)-form of $\mathrm{End}E$. 
Now we assume $\theta_t=(A_0+tA_1+O(t^2))\omega_t$ where $A_0$ and $A_1$ are smooth sections of $\mathrm{End}E$ and $\om_t$ is a (1,0) local frame of $X_t$ defined in section \ref{Rie}. Then 
$$\theta_t=A_0dz+tA_1dz-t\eta(A_0dz)+O(t^2).$$
Thus we must have $A_0dz=\theta$, and this completes the proof of our claim.

Now we consider the compatible condition $\bar\p_t\theta_t=0$ for the cocycle $(\varphi,\psi,\eta)$ in our claim. For any smooth section $B\omega_t$ of $\mathrm{End}E\otimes \Omega^{1,0}_{X_t}$ and $a\in\mathcal A^{0,0}(E)$, we have
\begin{align*}&\big(\bar\p_t(B\omega_t)\big)(a)=(\bar\p_tB)(a)\omega_t+B(a)\bar\p_t\omega_t\\
=&\{\bar\p_t(B(a))-B(\bar\p_ta)\}\omega_t+(Ba)\bar\p_t\omega_t\\
=&(\bar\p B)\wedge (dz)(a)-t\Ch^{1,0}(\eta(Bdz))(a)+t[\psi,Bdz](a)+O(|t|^2).
\end{align*}Take $B\omega_t=\theta_t$ in the above equality and we get 
\begin{align}\label{dgla}
\bar\p\varphi+[\psi,\theta]=\Ch^{1,0}(\eta(\theta)),
\end{align}
where $\eta:\mathcal A^{p,q}(\mathrm{End}E)\to\mathcal A^{p-1,q+1}(\mathrm{End} E)$ is the contraction. This \eqref{dgla} is just the \textbf{closed condition} $D^1(\varphi,\psi,\eta)=0$ defined in the truncated complex of \eqref{reso}.

Besides, $(g,0)\in \mathcal A^{0,0}(\mathrm{End} E)\oplus\mathcal A^{0,0}(T_{X_0})=\mathcal A^{0,0}(\At)$ defines an exact class$$D^0(g,0)=(\tad(g),\bar\p g,0).$$
This exact term arises from a gauge transform $G:=\mathrm{id}_E+tg+O(t^2)$ of $(E,\bar\p,\theta)$, for any such $g$. (If we consider $(0,\tau)\in\mathcal A^{0,0}(\mathrm{End} E)\oplus\mathcal A^{0,0}(T_{X_0})$ with $\tau\ne0$, the induced exact term $D^0(0,\tau)$ comes from the diffeomorphism of $X_0$ homotopic to the identity map of $X_0$.) This completes the proof of \eqref{Holtan}.

By Lemma \ref{2termcx}, we have the induced long exact sequence of hypercohomology groups. Since $H^0(X_0,T_{X_0})=0$ and $\mathbb H^2(X_0,(\mathrm{End}E,\mathrm{ad}(\theta)))=0$, we have \eqref{shortH}.
\end{proof}

\begin{remark}
1. If $T_0S$ is not equal to $H^1(X_0,T_{X_0})$, we let $\mathcal L:=\rho_{KS}(T_0S)\subset H^1(X_0,T_{X_0})$. Then the deformation class of $(E,\bar\p,\theta)\in\Mdol$ is $$j_\ast^{-1}(\mathcal L)\subset\mathbb H^1(\At, \tad\oplus \Ch^{1,0}(\theta\lrcorner)),$$
where $j_\ast$ is defined in \eqref{shortH}.\\
2. Fix $\frac{\p}{\p t}\in T_0S$. Let 
\begin{align}\label{Aspace}
A_{\p/\p t}:=\{[(\varphi,\psi,\eta)]\in \mathbb H^1(\At, \tad\oplus \Ch^{1,0}(\theta\lrcorner)):\ [\eta]=\rho_{KS}(\frac{\p}{\p t})\}\end{align}
which is equal to 
$\{v\in T^{1,0}_{\sigma(0)}\Mdol:p_\ast(v)=\frac{\p}{\p t}\}.$ For simplicity, we write a class in $A_{\p/\p t}$ as $[(\varphi,\psi)]$ omitting $\eta$. For any holomorphic section $\sigma:S\to\Mdol$, we have $$\sigma_\ast(\p /\p t)=[(\varphi,\psi)]\in A_{\p /\p t}.$$
3. Since $H^2(X_0,\mathrm{End}E)=0$, we may take $\psi=0$ in \eqref{dgla} to get a 'trivial' deformation $(E,\bar\p_t)$ of the holomorphic bundle $(E,\bar\p)$. Then \eqref{dgla} is the equation
$$\bar\p\varphi=\Ch^{1,0}(\eta(\theta)),$$
which coincides with the deformation equation in \cite[Theorem 1.3]{LRY} solving a holomorphic $(1,0)$ section of $(\mathrm{End}E,\bar\p_t)$ for each $t$ with the initial section $\theta\in \Gamma(\mathrm{End}E\otimes\Omega_{X_0}^1)$.

\end{remark}
\subsection{Deformation class of a real analytic family of Higgs bundles}
In this section, we assume $\sigma: S\to\Mdol$ is a real analytic section. We focus on its anti-holomorphic deformation class.

First, we recall the anti-holomorphic involution of Deligne's twistor space $\mathcal{M}_\mathrm{Del}(X_0) \to \mathbb{P}^1$ in \cite{Simp95}. 
Define the involution map $\sigma': \mathcal{M}_\mathrm{Del}(X_0) \to \mathcal{M}_\mathrm{Del}(X_0)$ by 
\[
(E,\lambda\Ch^{1,0}+\bar\p+\theta+\lambda\theta^{\star_{h_0}},\lambda) \mapsto (\overline E^\vee,-\bar\lambda^{-1}\Ch^{1,0}+\bar\p+\theta-\bar\lambda^{-1}\theta^{\star_{h_0}},-\bar\lambda^{-1}).
\]
We restrict this involution to $\mathcal M_{\mathrm{Dol}}(X _0)\to\{0\}$, 
which gives an anti-holomorphic homeomorphism
\begin{align*}
    \sigma':\mathcal M_{\mathrm{Dol}}(X_0) & \to \mathcal M_{\mathrm{Dol}}(\overline {X_0})\\
    (E,\bar\p,\theta) & \mapsto (\overline E^\vee,\Ch^{1,0},\theta^{\star_{h_0}})
\end{align*}
where $\overline{E}^\vee$ is the vector bundle whose transition functions are inverse to the conjugate transpose of transition functions of $E$, and $\overline{X_0}$ is the Riemann surface diffeomorphic to $X_0$ with the conjugate complex structure.

This gives a $\mathbb C$-linear isomorphism between $T^{0,1}_{(E,\bar{\partial}, \theta)} \mathcal M_{\Dol}(X_0)$ and $T^{1,0}_{(\overline E^\vee,\Ch^{1,0},\theta^{\star_{h_0}})} \mathcal M_{\Dol}(\overline{X_0})$, i.e.
\[
\sigma_{*,(E,\bar{\partial}, \theta)} : \overline \Hy \stackrel{\simeq}{\rightarrow} \mathbb H^1(\overline{X_0}, (\End \overline E^\vee, \mathrm{ad}(\theta^{\star_{h_0}}))).
\]

We want to use the Dolbeault resolution to compute $\overline\Hy$. So we view $\overline E^\vee\cong E$ by the harmonic metric $h_0$, and then $\mathrm{End}(\overline E^\vee)\cong\mathrm{End}E$. In addition, we clearly have $\Omega_{\overline{X_0}}^{1,0}=\Omega_{X_0}^{0,1}$ and $\Omega_{\overline{X_0}}^{0,1}=\Omega_{X_0}^{1,0}$. Hence, just as in section \ref{htdol}, we also have the following Dolbeault resolution:
\[
\begin{tikzcd}
C^{1,0}:=\mathcal A^{1,0}(\End E  ) \arrow[r, "{\mathrm{ad}( \theta^{\star_{h_0}})}"]                                                   & C^{1,1}:={\mathcal A^{1,0}(\End E \otimes \Omega_{X_0}^{0,1})} \\
C^{0,0}:=\mathcal A^{0,0} (\End E) \arrow[r, "{\mathrm{ad}( \theta^{\star_{h_0}})}"] \arrow[u, "\Ch^{1,0}"] & C^{0,1}:=\mathcal A^{0,0} (\End E \otimes \Omega_{X_0}^{0,1}) \arrow[u, "\Ch^{1,0}"]              
\end{tikzcd}
\]
which gives the following truncated complex  
\[C^{0,0} \overset{d^{0c}}{\longrightarrow} C^{1,0} \oplus C^{0,1} \overset{d^{1c}}{\longrightarrow}  C^{1,1}\]  
where 
\begin{gather*}
    d^{0c}(g) = (\Ch^{1,0} g, [\theta^{\star_{h_0}},g]) \quad \text{for } g \in C^{0,0},\\
    d^{1c}(\varphi, \psi) =  \Ch^{1,0} \psi +[\varphi,\theta^{\star_{h_0}}] \quad \text{for } (\varphi, \psi) \in C^{0,1} \oplus C^{1,0}.
\end{gather*}
Hence
$$T^{0,1}_{(E, \overline{\partial}, \theta)} \mathcal{M}_{\mathrm{Dol}}(X_0)=\overline\Hy=\frac{\mathrm{Ker\ }d^{1c}}{\mathrm{Im\ }d^{0c}}.$$
And we have the following complex conjugate operator 
\begin{align*}
    \mathrm{bar} : \Hy & \rightarrow \overline\Hy\\
    [(\varphi,\psi)] & \mapsto [(\psi^{\star_{h_0}}, -\varphi^{\star_{h_0}})].
\end{align*}

We shall describe how a class $[(\varphi,\psi)]\in\overline\Hy$ deforms $(E,\bar\p,\theta)$ in an anti-holomorphic manner. For this, we need to define the harmonic 1-form.

\begin{definition}\label{harcl}
    Suppose $(\varphi,\psi) \in C^{1,0}\oplus C^{0,1}$, we say $(\varphi,\psi)$ is harmonic if $\bar\p\varphi + [g,\theta] =0$ and $ \Ch^{1,0}\psi + [\varphi,\theta^{\star_{h_0}}] = 0$.
\end{definition} 

Then Hitchin \cite{Hit} proved that 

\begin{proposition}
    For each class in $\Hy$ (or in $\overline{\Hy}$), there is a unique harmonic representative $(\varphi , \psi)$ of this class as defined in Definition \ref{harcl}.
\end{proposition}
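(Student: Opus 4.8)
The plan is to recognize the two conditions in Definition~\ref{harcl} as the single statement that $\alpha:=\varphi+\psi\in\mathcal A^{1,0}(\End E)\oplus\mathcal A^{0,1}(\End E)=\mathcal A^{1}(\End E)$ is \emph{harmonic} for the Laplacian of the operator $D'':=\bar\p+\tad$, and then to deduce existence and uniqueness from the Hodge theory of harmonic bundles. Write the flat connection attached to $h_0$ as $D=\Ch+\theta+\theta^{\star_{h_0}}$ and split it as $D=D'+D''$ with $D''=\bar\p+\tad$ and $D'=\Ch^{1,0}+\mathrm{ad}(\theta^{\star_{h_0}})$ acting on the total complex $\mathcal A^{\bullet}(\End E)$. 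Because $X_0$ is a curve, all bidegree-$(2,0)$ and $(0,2)$ terms vanish, so for a degree-one element $\alpha=\varphi+\psi$ one has $D''\alpha=\bar\p\varphi+[\psi,\theta]$ and $D'\alpha=\Ch^{1,0}\psi+[\varphi,\theta^{\star_{h_0}}]$. Thus the first condition of Definition~\ref{harcl} (which should read $\bar\p\varphi+[\psi,\theta]=0$; the symbol $g$ there is a typo for $\psi$) is exactly $D''\alpha=0$, i.e.\ the cocycle condition $d^1(\varphi,\psi)=0$ defining a class in $\Hy$, while the second condition is exactly $D'\alpha=0$.

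Next I would invoke Simpson's K\"ahler identities for the harmonic bundle $(E,\bar\p,\theta,h_0)$, which hold precisely because $h_0$ solves the Hitchin--Yang--Mills equation \eqref{HYM}. These identities express $(D'')^{\ast}$ in terms of $[\Lambda_{\omega_0},D']$ and $(D')^{\ast}$ in terms of $[\Lambda_{\omega_0},D'']$, and they yield the equality of Laplacians
\[
\Delta_{D}=2\,\Delta_{D''}=2\,\Delta_{D'}.
\]
From these identities one reads off, on degree-one forms, the equivalences $D''\alpha=0\iff(D')^{\ast}\alpha=0$ and $D'\alpha=0\iff(D'')^{\ast}\alpha=0$. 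Consequently
\[
\ker\Delta_{D''}\cap\mathcal A^{1}(\End E)=\{\alpha:\ D''\alpha=0\ \text{and}\ D'\alpha=0\},
\]
so the space of harmonic representatives in the sense of Definition~\ref{harcl} is exactly the space of $\Delta_{D''}$-harmonic $1$-forms.

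Finally, since $\Delta_{D''}$ is a self-adjoint elliptic operator on the compact surface $X_0$, the standard Hodge decomposition gives the orthogonal splitting $\mathcal A^{1}(\End E)=\mathcal H^{1}\oplus\mathrm{Im}\,D''\oplus\mathrm{Im}\,(D'')^{\ast}$ with $\mathcal H^{1}=\ker\Delta_{D''}$, and each $D''$-cohomology class meets $\mathcal H^{1}$ in exactly one point. Existence of a harmonic representative follows from the projection onto $\mathcal H^{1}$; uniqueness follows from the integration-by-parts computation that if $\alpha=D''g$ is harmonic then $\|\alpha\|^{2}=\langle D''g,\alpha\rangle=\langle g,(D'')^{\ast}\alpha\rangle=0$. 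The assertion for $\overline{\Hy}$ is obtained verbatim by replacing the pair $(\bar\p,\theta)$ with $(\Ch^{1,0},\theta^{\star_{h_0}})$, i.e.\ by running the same argument for the conjugate harmonic bundle, matching the Dolbeault resolution displayed just before Definition~\ref{harcl}.

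The main obstacle is the bookkeeping in the K\"ahler identities: one must check that the Hitchin--Yang--Mills equation \eqref{HYM} is exactly the input that makes the commutator identities hold, and track the signs carefully so that $D''\alpha=0$ and $D'\alpha=0$ match $\ker(D'')^{\ast}$ and $\ker(D')^{\ast}$ in the right order. Once the identities are in place, existence and uniqueness are formal consequences of elliptic Hodge theory on the compact curve, and the curve hypothesis is what lets us ignore the higher-bidegree terms throughout.
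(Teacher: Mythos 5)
Your proof is correct and follows essentially the same route as the source the paper relies on: the paper gives no proof of this proposition at all, simply attributing it to Hitchin, and your argument --- identifying the two conditions of Definition \ref{harcl} with $D''\alpha=0$ and $D'\alpha=0$ for $\alpha=\varphi+\psi$, invoking the K\"ahler identities that hold because $h_0$ solves \eqref{HYM} to get $\Delta_{D'}=\Delta_{D''}$, and then applying elliptic Hodge theory on the compact curve --- is exactly the standard proof of the cited result. You are also right that the symbol $g$ in Definition \ref{harcl} is a typo for $\psi$.
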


Any harmonic class $[(\varphi,\psi)]\in\overline\Hy$ defines a (first-order) anti-holomorphic deformation of $(E,\bar\p, \theta)$ by
$$\bar\p_t=\bar\p+\bar t\psi+O(|t|^2);\ \theta_t=\theta+\bar t\varphi+O(|t|^2).$$

Combining this with Proposition \ref{holtan} and Lemma \ref{realsec}, we have
\begin{proposition}\label{aholtan}Let $\sigma:S\to\Mdol$ be a real analytic section and $\frac{\p}{\p t}\in T_0S$. Then there exists a class $[(\varphi_1,\psi_1)]\in A_{\p/\p t}$ (defined in \eqref{Aspace}) and a harmonic class $[(\varphi_2,\psi_2)]\in\overline\Hy$ such that $\sigma_\ast(\frac{\p}{\p t})$ has $[(\varphi_1,\psi_1)]\in A_{\p/\p t}$ as its (1,0) part and has $[(\varphi_2,\psi_2)]$ as its (0,1) part. 
\end{proposition}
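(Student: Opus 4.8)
The plan is to obtain the decomposition by assembling Lemma~\ref{realsec}, Proposition~\ref{holtan}, and the existence of harmonic representatives, so that the proof amounts to matching each Hodge-type summand of $\sigma_\ast(\frac{\p}{\p t})$ with the appropriate explicit Dolbeault description. First I would apply Lemma~\ref{realsec} to the real analytic section $\sigma$ in the direction $\frac{\p}{\p t}\in T_0S=T_0^{1,0}S$, writing
$$\sigma_\ast(\tfrac{\p}{\p t})=w^{1,0}\oplus w^{0,1}\in T^{1,0}_{\sigma(0)}\Mdol\oplus T^{0,1}_{\sigma(0)}\Mdol,$$
where the lemma also records $p_\ast(w^{1,0})=\frac{\p}{\p t}$ and $p_\ast(w^{0,1})=0$. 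The remaining content is to produce cocycle representatives for the two summands.

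For the holomorphic part, the identity $p_\ast(w^{1,0})=\frac{\p}{\p t}$ is precisely the membership condition in the definition \eqref{Aspace} of $A_{\p/\p t}$, so $w^{1,0}\in A_{\p/\p t}$. Invoking Proposition~\ref{holtan}, which identifies $T^{1,0}_{\sigma(0)}\Mdol$ with $\mathbb H^1(\At,\tad\oplus\Ch^{1,0}(\theta\lrcorner))$ and shows that the truncated Atiyah-bundle complex computes this hypercohomology, I would represent $w^{1,0}$ by a $D^1$-closed cocycle $(\varphi_1,\psi_1,\eta)$ with $[\eta]=\rho_{KS}(\frac{\p}{\p t})$. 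Suppressing the fixed class $\eta$ as in the convention introduced after Proposition~\ref{holtan}, this yields the desired class $[(\varphi_1,\psi_1)]\in A_{\p/\p t}$.

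For the anti-holomorphic part, Lemma~\ref{realsec} guarantees that $w^{0,1}$ factors through the fiber direction $T^{0,1}_{\sigma(0)}\mathcal M_{\mathrm{Dol}}(X_0)$, which the conjugate Dolbeault resolution identifies with $\overline{\Hy}$. By the cited proposition of Hitchin on unique harmonic representatives, the class $w^{0,1}\in\overline{\Hy}$ has a unique harmonic representative $(\varphi_2,\psi_2)\in C^{1,0}\oplus C^{0,1}$ in the sense of Definition~\ref{harcl}; taking $[(\varphi_2,\psi_2)]$ to be this class and combining with the previous paragraph gives the stated decomposition of $\sigma_\ast(\frac{\p}{\p t})$.

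I do not expect a genuine obstacle here, since the statement is an assembly of the earlier results rather than a new argument; the one analytic input—existence and uniqueness of the harmonic representative for $w^{0,1}$—is supplied by the quoted proposition. The step requiring the most care, were one to write everything self-contained, is bookkeeping: one must check that the abstract splitting $T^{1,0}\oplus T^{0,1}$ of Lemma~\ref{realsec} is compatible with the two distinct complexes used on each side, namely the Atiyah-bundle complex computing the $(1,0)$ part and the conjugate complex computing $\overline{\Hy}$, and that the identification $T^{0,1}_{\sigma(0)}\mathcal M_{\mathrm{Dol}}(X_0)=\overline{\Hy}$ is the one induced by the complex conjugation operator $\mathrm{bar}$ defined just before Definition~\ref{harcl}.
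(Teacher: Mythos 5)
Your proposal is correct and follows essentially the same route as the paper, which gives no separate argument but simply asserts the proposition by "combining" the preceding discussion (the harmonic-representative result of Hitchin and the anti-holomorphic deformation description) with Proposition~\ref{holtan} and Lemma~\ref{realsec}. Your write-up is just an expanded version of that assembly, with the correct identifications of the $(1,0)$ part in $A_{\p/\p t}$ and the $(0,1)$ part in $\overline\Hy$.
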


\section{Isomonodromic deformation of a Higgs bundle}
In this section, we prove our main results using the deformation theory discussed earlier. The main technique is to relate every deformation class to the first-order deformation of the harmonic metric of a given Higgs bundle.
\subsection{Deformation class and Kodaira-Spencer map on Hitchin-Simpson correspondence}

In this section, let $\sigma: S\to\Mdol$ be the isomonodromic deformation of $(E,\bar\p,\theta)$ on $X_0$ defined in section \ref{Isomd}. By the Hitchin-Simpson correspondence, $\sigma$ is a real analytic section. We shall provide explicit formulas for its holomorphic and anti-holomorphic deformation class.

Let $(\mathcal V,D)$ be the flat rank $n$ complex vector bundle over $X_0$ corresponding to $(E,\bar\p,\theta)$. (If we forget the holomorphic structure, $\mathcal V=E$ over $X_0$.) Let $h_0$ be the harmonic metric of $(\mathcal V,D)$ over $X_0$. Then $(\mathcal V,D)$ over $X_t$ is the isomonodromy deformation of the initial flat bundle $(\mathcal V,D)$ over $X_0$. Now we let $h$ be the harmonic metric of $(\mathcal V,D)$ over $X_t$, which satisfies the following equation
\begin{align}\label{Hmetric}
    D_h^{\star_h}\psi_h=0,
\end{align}
where $D=D_h+\psi_h$ is the decomposition of $D$ into unitary and Hermitian parts, and $\star_h$ is the Hodge star operator with respect to $h$. 

\begin{lemma}\label{Taylor}
    The harmonic metric $h$ has the following Taylor expansion with $g\in\mathcal A^0(\mathrm{End}(E))$ (after pulling back $(\mathcal V,h)$ to $X_0$ by $\mathrm{id}:X_0\to (X_0,\mu_t)=X_t$)
    \begin{align*}
        h=h_0+th_0g+\bar th_0g^{\star_{h_0}}+O(|t|^2).
    \end{align*}
\end{lemma}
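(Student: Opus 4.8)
The plan is to parametrize all Hermitian metrics on the fixed smooth bundle $\mathcal V=E$ relative to the harmonic metric $h_0$: every metric can be written as $h=h_0H$, meaning $h(u,v)=h_0(Hu,v)$, where $H$ is an $h_0$-self-adjoint positive endomorphism with $H=\mathrm{id}_E$ at $t=0$. Under this parametrization the statement becomes the assertion that the solution $H=H(t)$ of the harmonic metric equation over $X_t$ is differentiable in $(t,\bar t)$ at $t=0$ and that its first-order term has the stated shape. Thus the proof splits into an analytic part (existence and smooth dependence of $H(t)$) and an elementary algebraic part (identifying the first-order term via self-adjointness).

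For the analytic part I would set up the harmonic metric equation as a smooth family $\mathcal F(H,t)=0$, where $\mathcal F(H,t):=D_h^{\star_h}\psi_h$ with $h=h_0H$ and $D=D_h+\psi_h$. The essential point is that $t$ enters $\mathcal F$ not only through the unknown $H$ but also through the complex structure $\mu_t$ on the base, since $\star_h$ and $D_h^{\star_h}$ are taken with respect to the complex structure of $X_t$ (equivalently, the $(1,0)$-coframe $\om_t=dz-t\eta(dz)+O(t^2)$ of Section~\ref{Rie}); so $\mathcal F$ must first be transported to the fixed manifold $X_0$ via $\mathrm{id}:X_0\to X_t$. Since $h_0$ is harmonic on $X_0$ we have $\mathcal F(\mathrm{id}_E,0)=0$. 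The linearization $D_H\mathcal F|_{(\mathrm{id}_E,0)}$ is, up to lower-order terms, the Laplacian-type operator attached to the flat bundle; on a compact Riemann surface it is elliptic and Fredholm of index zero, and by semisimplicity of $(\mathcal V,D)$ its kernel consists only of the scaling direction, which is removed by the determinant normalization $\det H=1$. Hence $D_H\mathcal F$ is invertible on the space of trace-free $h_0$-self-adjoint endomorphisms, and the real-analytic implicit function theorem produces an analytic family $t\mapsto H(t)$ with $H(0)=\mathrm{id}_E$. Alternatively, this differentiable dependence can be read off directly from the already-cited real-analyticity of the Hitchin-Simpson correspondence together with the uniqueness of the harmonic metric for a semisimple flat bundle.

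With $H(t)$ differentiable, write $H(t)=\mathrm{id}_E+tg+\bar t\,g'+O(|t|^2)$ with $g,g'\in\mathcal A^0(\mathrm{End}E)$. Since $h=h_0H$ is Hermitian, $H(t)$ is $h_0$-self-adjoint for every $t$, i.e. $H(t)=H(t)^{\star_{h_0}}$. Because the adjoint is conjugate-linear, comparing coefficients in $\mathrm{id}_E+tg+\bar t\,g'=\mathrm{id}_E+\bar t\,g^{\star_{h_0}}+t\,g'^{\star_{h_0}}$ forces $g'=g^{\star_{h_0}}$. Translating back through $h=h_0H$, namely $h(u,v)=h_0(u,v)+t\,h_0(gu,v)+\bar t\,h_0(g^{\star_{h_0}}u,v)+O(|t|^2)$, gives exactly $h=h_0+t\,h_0g+\bar t\,h_0g^{\star_{h_0}}+O(|t|^2)$, with $g:=\tfrac{d}{dt}\big|_{t=0}H$.

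The main obstacle is the analytic step: establishing that the harmonic metric depends differentiably on $t$ and, in the implicit-function-theorem route, verifying invertibility of the linearized operator after removing the scaling ambiguity — this is where semisimplicity of $(\mathcal V,D)$ and compactness of $X_0$ are used. The further subtlety is that $\mathcal F$ is genuinely a family over the varying complex structures $X_t$, so the equation must be pulled back to the fixed surface $X_0$ before differentiating; once this is done, the self-adjointness argument pinning down the first-order term is purely formal.
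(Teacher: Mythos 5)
Your proof is correct and, at its core, identical to the paper's: the paper likewise posits the expansion $h=h_0+th_1+\bar t\,\bar h_1^t+O(|t|^2)$, sets $h_1=h_0g$, and uses Hermitian symmetry to conclude $\bar h_1^t=\bar g^th_0=h_0g^{\star_{h_0}}$, which is exactly your self-adjointness step for $H=h_0^{-1}h$. The analytic preamble you add (differentiable dependence of the harmonic metric on $t$) is left implicit in the paper, which relies on the already-stated real-analyticity of the Hitchin--Simpson correspondence (your fallback route); the only small caveat in your implicit-function-theorem variant is that for merely polystable, non-stable bundles the kernel of the linearized operator contains all flat self-adjoint endomorphisms, not just the scaling direction.
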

\begin{proof}
    Since $h$ is a Hermitian metric, one has $h=h_0+th_1+\bar t\bar h_1^t+O(|t|^2))$. Then we may assume $h_1=h_0g$ for some $g$ being a section of End$(E)$. Then $\bar h_1^t=\bar g^t h_0=h_0 g^{\star_{h_0}}.$ 
\end{proof}
We also view $h$ as a harmonic map on $\widetilde X$, and by \cite[Lemma 2.16]{Li} we have the following formula for $\psi_h$
\begin{align}\label{psi}
    \psi_h=-\frac{1}{2}h^{-1}Dh.
\end{align}

Combining \eqref{Hmetric} and \eqref{psi}, we will derive a PDE for $g$ later in this section. This $g$ determines how the harmonic metric deforms. Therefore, $g$ determines the first-order deformation class of the isomonodromic deformation of a Higgs bundle.

Let $\omega_t$ be a local (1,0) frame of $X_t$ as in section \ref{Rie}. Let $L_t$ be the dual (1,0) vector field of $\omega_t$ such that $\om_t(L_t)=1$ and $\bar\om_t(L_t)=0$. (For simplicity, we omit $t$ in $\om_t$ and $L_t$ in this section and just denote them by $\om$ and $L$.)

Let $D':=D^{1,0}$ and $D'':=D^{0,1}$ under the complex structure of $X_0$. 
\begin{lemma}\label{Aom}
    By \eqref{psi}, $\theta:=-\cc h_0^{-1}D' h_0$ is the Higgs field of $(\mathcal V,D)$ over $X_0$. Assume $\psi_h=A\om+B\bar\om$ for local sections $A,B$ of $\mathrm{End}E$. Then $\theta_t:=A\om$ is the Higgs field of $(\mathcal V,D)$ over $X_t$. We have the following (after pulling back to $X_0$ by $\mathrm{id}:X_0\to (X_0,\mu_t)=X_t$):
    \begin{align*}
        A\om=\theta+t(-\eta(\theta)+[\theta,g]-\cc D' g)+\bar t(\bar \eta(\theta^{\star_{h_0}})+[\theta,\bg]-\cc D' \bg)+O(|t|^2), \end{align*}
        where $g$ is defined in Lemma \ref{Taylor} and $[\theta,g]=\theta g-g\theta$ .
    And for $B\bar\om$, we have\begin{align*}
        B\bar\om= \theta^{\star_{h_0}}+t(\eta(\theta)+[\theta^{\star_{h_0}},g]-\cc D'' g)+\bar t(-\bar \eta(\theta^{\star_{h_0}})+[\theta^{\star_{h_0}},\bg]-\cc D'' \bg)+O(|t|^2).
    \end{align*}
\end{lemma}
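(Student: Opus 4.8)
The plan is to compute the Taylor expansion of $\psi_h = -\tfrac{1}{2}h^{-1}Dh$ directly using the expansion of $h$ from Lemma \ref{Taylor}, and then decompose the result by type with respect to the complex structure $\omega_t$ on $X_t$. First I would substitute $h = h_0 + th_0 g + \bar t h_0 g^{\star_{h_0}} + O(|t|^2)$ into \eqref{psi}. Since $h^{-1} = h_0^{-1} - t g h_0^{-1} - \bar t g^{\star_{h_0}} h_0^{-1} + O(|t|^2)$ and $Dh = Dh_0 + t D(h_0 g) + \bar t D(h_0 g^{\star_{h_0}}) + O(|t|^2)$, I would multiply these out and collect the constant, $t$-linear, and $\bar t$-linear terms separately. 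The constant term should reproduce $\psi_{h_0} = \theta + \theta^{\star_{h_0}}$, which serves as a consistency check; here one uses that $\theta = -\tfrac{1}{2}h_0^{-1}D'h_0$ by definition and that $\theta^{\star_{h_0}} = -\tfrac{1}{2}h_0^{-1}D''h_0$.

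The key subtlety is that the connection $D$ is fixed (it is the isomonodromic/flat connection, independent of $t$), but its decomposition $D = D' + D''$ into $(1,0)$ and $(0,1)$ parts must be taken with respect to the complex structure on $X_t$, not $X_0$. So the second step is to re-express the $X_t$-type decomposition in terms of the fixed $X_0$-operators $D'$ and $D''$. Using the relation from Section \ref{Rie} that the $X_t$ frame is $\omega_t = dz - t\eta(dz) + O(t^2)$, a $(1,0)$-form on $X_0$ such as $D' h_0$ acquires correction terms from contraction with $\eta$ and $\bar\eta$ when re-expanded in the $\omega_t, \bar\omega_t$ basis. Concretely, I expect the contraction operators $\eta(\cdot)$ and $\bar\eta(\cdot)$ to appear exactly because converting $dz$ and $d\bar z$ into $\omega_t$ and $\bar\omega_t$ mixes the types at first order in $t$ and $\bar t$. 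Matching the coefficient of $\omega_t$ then yields $A$ and matching the coefficient of $\bar\omega_t$ yields $B$.

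The main obstacle will be carefully tracking how the type decomposition interacts with the $t$- and $\bar t$-expansions simultaneously, since both the metric (hence $\psi_h$ as a section of $\End E$) and the frame $\omega_t$ depend on $t$ and $\bar t$. In particular I would have to be attentive to terms like $g \cdot \theta$ versus $\theta \cdot g$ so that the commutators $[\theta, g]$ and $[\theta^{\star_{h_0}}, g]$ come out with the correct sign, and to the appearance of the half-coefficients $-\tfrac{1}{2}D'g$ and $-\tfrac{1}{2}D''g$ coming from differentiating the $g$-perturbation of $h$. The cleanest bookkeeping device is to write $h^{-1}Dh = h^{-1}D'h + h^{-1}D''h$ and expand each piece, using $D'(h_0 g) = (D'h_0)g + h_0 D'g$ (and similarly for $D''$ and the $g^{\star_{h_0}}$ term), then factor out $h_0$ to identify the $\End E$-valued coefficients. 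Once the expansion of $\psi_h = A\omega + B\bar\omega$ is assembled in the fixed $X_0$-basis, reorganizing into the $\omega_t, \bar\omega_t$ basis produces the stated formulas; the two displayed identities for $A\omega$ and $B\bar\omega$ then follow by reading off coefficients. I would finish by verifying that the $\bar\p$-closedness of $\theta_t = A\omega$ is consistent with the holomorphicity constraints, though the statement itself only asserts the expansions, so no further compatibility check is strictly needed.
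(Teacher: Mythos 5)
Your proposal is correct and follows essentially the same route as the paper: expand $\psi_h=-\tfrac12 h^{-1}Dh$ using the Taylor expansion of $h$ from Lemma \ref{Taylor}, then extract $A$ and $B$ by pairing with the $t$-dependent frame $\omega_t,\bar\omega_t$ (the paper does this via the dual vector fields $L,\bar L$), with the $\eta$- and $\bar\eta$-contraction terms arising exactly as you describe from the change of basis between $dz,d\bar z$ and $\omega_t,\bar\omega_t$. In fact your account makes explicit the "after some simplification" step that the paper leaves to the reader.
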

\begin{proof}By \eqref{psi}, we have
\begin{align*}
    \psi_h&=-\frac{1}{2}{(h_0+th_0g+\bar th_0g^{\star_{h_0}})^{-1}}D(h_0+th_0g+\bar th_0\bg)+O(|t|^2)
    \\
    &=-\frac{1}{2} (h_0^{-1} - t gh_0^{-1}-\bar t\bg h_0^{-1}) D(h_0 + t h_0g+\bar th_0\bg) + O(|t|^2)\\
    &=-\cc h_0^{-1}Dh_0+\frac{t}{2}\left\{ gh_0^{-1}Dh_0- h_0^{-1}D(h_0g)\right\}+\frac{\bar t}{2}\left\{\bg h_0^{-1}Dh_0- h_0^{-1}D(h_0\bg)\right\}+O(|t|^2).
\end{align*}

Thus we have 
\begin{equation}\label{AB}
\begin{aligned}
    A=&-\frac{1}{2} h_0^{-1} L h_0 + \frac{t}{2}\left\{ gh_0^{-1}Lh_0- h_0^{-1} L(h_0g)\right\}\\&+\frac{\bar t}{2}\left\{\bg h_0^{-1}Lh_0- h_0^{-1}L(h_0\bg)\right\}+O(|t|^2);\\
    B =& -\frac{1}{2} h_0^{-1} \bar L h_0 + \frac{t}{2}\left\{ gh_0^{-1}\bar Lh_0- h_0^{-1}\bar L(h_0g)\right\}\\&+\frac{\bar t}{2}\left\{\bg h_0^{-1}\bar Lh_0- h_0^{-1}\bar L(h_0\bg)\right\}+O(|t|^2).
\end{aligned}\end{equation}

After some simplification, we obtain the desired result.\end{proof}

Now we determine the PDE that $g$ satisfies. This linear non-homogeneous PDE can be viewed as the linearization of the non-linear equation \eqref{Hmetric} by using the isomonodromic deformation to 'eliminate' the non-linear term. 
\begin{proposition}\label{pde}
    The endomorphism $g$ in Lemma \ref{Aom} satisfies the following PDE
    \begin{align*}
        D''D' g=-2D'(\eta(\theta))-D'([\theta^{\star_{h_0}},g])+D''([\theta,g]);
    \end{align*}
    and $\bg$ satisfies
    \begin{align*}
        D''D'\bg=2D''(\bar \eta(\theta^{\star_{h_0}}))-D'([\theta^{\star_{h_0}},\bg])+D''([\theta,\bg]).
    \end{align*}
    The second equation can be obtained by taking $\star_{h_0}$ of the first equation.
\end{proposition}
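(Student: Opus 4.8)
The plan is to derive the PDE directly from the harmonicity equation \eqref{Hmetric}, namely $D_h^{\star_h}\psi_h=0$, by expanding everything to first order in $t$ and $\bar t$ and then extracting the coefficient of $t$. The key observation is that the harmonicity equation is the one piece of data I have not yet used: Lemma \ref{Aom} only encoded the decomposition $\psi_h = A\om + B\bar\om$ into Higgs and co-Higgs parts, but it did not impose the harmonic metric equation itself. So the strategy is to compute $D_h^{\star_h}\psi_h$ to first order using the Taylor expansion of $h$ from Lemma \ref{Taylor} and the expansions of $A\om,B\bar\om$ from Lemma \ref{Aom}, collect the linear terms, and read off the equation for $g$.

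Concretely, I would first rewrite $D_h^{\star_h}\psi_h=0$ in a form adapted to the compact Riemann surface setting. Recall $\psi_h = \theta_t + \theta_t^{\star_h}$, and the harmonicity condition is equivalent to a Laplacian-type equation. The most efficient route is to use the identity $\psi_h=-\tfrac12 h^{-1}Dh$ from \eqref{psi} together with the fact that for a harmonic metric $D''\theta_t=0$ and the adjoint relation pairs the $(1,0)$ and $(0,1)$ parts. I would expand $D_h^{\star_h}$ — whose zeroth-order part is $D_{h_0}^{\star_{h_0}}$ — and $\psi_h$ simultaneously, keeping in mind that both the connection $D_h$ and the metric $h$ vary with $t$. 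Since $h_0$ is already harmonic, the $t^0$ term vanishes identically, so the first nontrivial information sits in the coefficient of $t$.

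The main technical step is to organize the first-order term. Using Lemma \ref{Aom}, the $(1,0)$ part of $\psi_h$ has $t$-coefficient $-\eta(\theta)+[\theta,g]-\tfrac12 D'g$, and the $(0,1)$ part has $t$-coefficient $\eta(\theta)+[\theta^{\star_{h_0}},g]-\tfrac12 D''g$. Applying the appropriate adjoint differential operator (which on a curve amounts to $D'$ on the $(0,1)$ piece and $D''$ on the $(1,0)$ piece, up to the metric-dependent correction coming from the $t$-variation of $\star_h$) and setting the result to zero should produce, after grouping the $g$-dependent and $\eta(\theta)$-dependent terms, precisely
\begin{align*}
D''D'g = -2D'(\eta(\theta)) - D'([\theta^{\star_{h_0}},g]) + D''([\theta,g]).
\end{align*}
I expect the hard part to be correctly accounting for the variation of the Hodge star $\star_h$ with respect to the deforming metric, since the adjoint operator $D_h^{\star_h}$ depends on $h$ and therefore contributes additional first-order terms beyond the naive substitution of the expansions into a fixed operator. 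One must verify that these metric-variation contributions are exactly what convert the self-adjoint structure into the $[\theta^{\star_{h_0}},g]$ and $[\theta,g]$ commutator terms with the correct signs, rather than spurious extra terms. Once the equation for $g$ is established, the equation for $g^{\star_{h_0}}$ follows formally by applying the Hodge star operator $\star_{h_0}$ to the first equation and using the standard conjugation rules $(\theta)^{\star_{h_0}}=\theta^{\star_{h_0}}$, $(D')^{\star_{h_0}} = D''$, and $\eta(\theta)^{\star_{h_0}}=\bar\eta(\theta^{\star_{h_0}})$, tracking how the factor of $2$ migrates between the two source terms, as the statement already indicates.
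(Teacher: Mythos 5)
Your proposal follows essentially the same route as the paper: linearize the harmonicity equation $D_h^{\star_h}\psi_h=0$ in $t$ using the expansions of $h$ and of $A\om,B\bar\om$ from Lemmas \ref{Taylor} and \ref{Aom}, and read off the coefficient of $t$. The one difficulty you flag but leave unresolved --- tracking the $t$-variation of $\star_h$ inside the adjoint operator --- is handled in the paper by the observation that self-adjointness of $\psi_h$ with respect to $h$ gives $A^{\star_h}=B+O(|t|^2)$ and $B^{\star_h}=A+O(|t|^2)$, so the equation reduces to $D_h(B\bar\om-A\om)=0$; substituting $D_h=D-\psi_h$ and evaluating on a flat section $a$ then eliminates all metric-variation corrections and yields the stated PDE directly.
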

\begin{proof}Note that $D_h^{\star_h}\psi_h=\star_h(D_h(\star_h\psi_h))$. Thus we focus on the equation $D_h(\star_h\psi_h)=0.$
Since $\star_h\psi_h=i(A^{\star_h}\bar\om-B^{\star_h}\om)$, we shall compute $A^{\star_h}$ and $B^{\star_h}$. Note that $A^{\star_h}=h^{-1}\overline{A^t}h$ and $B^{\star_h}=h^{-1}\overline {B^t}h$. We have \begin{align*}
    A^{\star_h}=B+O(|t|^2),\ B^{\star_h}=A+O(|t|^2).
\end{align*}
Thus \eqref{Hmetric} is equivalent to $D_h(B\bar\om-A\om)=0$. Now let $a$ be any (local) flat section of $E$. Then we have
\begin{align*}
    &(D_h(B\bar\om-A\om))a
    \\=&D_h\left\{(B\bar\om-A\om)a\right\}+(Ba)d\bar\om-(Aa)d\om+(B\bar\om-A\om)\left\{D_ha\right\}\\
    =&(D-\psi_h)\left\{(B\bar\om-A\om)a\right\}+(Ba)d\bar\om-(Aa)d\om-(B\bar\om-A\om)(\psi_ha)\\
    =&D(Ba)\bar\om+(Ba)d\bar\om-D(Aa)\om-(Aa)d\om\\
    =&D' (Ba)d\bar z-\bar tD'' (Ba)\overline{\eta(dz)}+(Ba)d\bar \om-D'' (Aa)dz\\&+tD' (Aa)\eta(d z)-(Aa)d\om+O(|t|^2)=0.
\end{align*}
Recall \eqref{AB} for $A$ and $B$, one obtains 
\begin{align*}
    &D' (Ba)d\bar z-D''(Aa)dz
    \\=&t\left\{D'(\eta(\theta))+D'([\theta^{\star_{h_0}},g])-D''([\theta,g])+D''D' g\right\}a
    \\&+\bar t\left\{-D''(\bar \eta(\theta^{\star_{h_0}}))+D'([\theta^{\star_{h_0}},\bg])-D''([\theta,\bg])+D''D' \bg\right\}a+O(|t|^2).\end{align*}
Moreover,
     \begin{align*}tD'( Aa)\eta(d z)-A(a)d\om=&tD'(\eta(\theta))a+O(|t|^2);\\
     (Ba)d\bar\om-\bar tD'' (Ba)\overline{\eta(dz)}=&-\bar tD''(\bar \eta(\theta^{\star_{h_0}}))a+O(|t|^2).
\end{align*}
A combination of formulas above gives the required equation. 
\end{proof}

Now we provide another version of Lemma \ref{Aom}, using the Chern connection instead of $D'$ and $D''$. Recall that $$D=D_{h_0}+\theta+\theta^{\star_{h_0}}$$ with $D_{h_0}$ being the Chern connection of $(E,h_0)$ on $X_0$. We have $D_{h_0}^{0,1}=\bar\p$. Hence $$D'=\Ch^{1,0}+\theta \text{ and }D''=\bar\p+\theta^{\star_{h_0}}.$$
 
\begin{proposition}\label{thetadbar}
    Let $\theta_t:=A\om$ be the Higgs field of $(\mathcal V,D)$ over $X_t$ in Lemma \ref{Aom}. We have the following formula (after pulling back to $X_0$)
    \begin{align*}
        \theta_t=\theta+t(-\eta(\theta)+\cc[\theta,g]-\cc \Ch^{1,0} g)+\bar t(\bar \eta(\theta^{\star_{h_0}})+\cc[\theta,\bg]-\cc \Ch^{1,0} \bg)+O(|t|^2). \end{align*}
    The two smooth endomorphisms of $E$, $g$ and $\bg$, satisfy the following equations:
    \begin{equation}\label{equ_g}
    \begin{aligned}
        \bar\p\Ch^{1,0} g&=-2\Ch^{1,0}(\eta(\theta))-[\theta,[\theta^{\star_{h_0}},g]];\\
        \bar\p\Ch^{1,0} \bg&=2\bar\p(\bar \eta(\theta^{\star_{h_0}}))-[\theta,[\theta^{\star_{h_0}},\bg]].
    \end{aligned}\end{equation}
    Besides, the following formula of $\bar\p_t$ (after pulling back to $X_0$) holds
    \begin{align*}\bar\p_t=\bar\p+t(\eta\circ D_{h_0}^{1,0}-\cc [\theta^{\star_{h_0}},g]+\cc \bar\p g)+\bar t(\bar\eta\circ \bar\p-\cc[\theta^{\star_{h_0}},\bg]+\cc\bar\p \bg)+O(|t|^2).
    \end{align*}
\end{proposition}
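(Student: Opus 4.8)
The plan is to recast Lemma~\ref{Aom} and Proposition~\ref{pde}, both phrased through the flat operators $D'=D^{1,0}$ and $D''=D^{0,1}$, into the Chern-connection language by means of the two substitutions
$$D'=\Ch^{1,0}+\tad,\qquad D''=\bar\p+\mathrm{ad}(\theta^{\star_{h_0}}),$$
acting on $\End E$-valued forms via the graded Leibniz rule. With these in hand the formula for $\theta_t$ is immediate: inserting $D'g=\Ch^{1,0}g+[\theta,g]$ into the expression for $A\om$ from Lemma~\ref{Aom}, the $t$-coefficient $-\eta(\theta)+[\theta,g]-\cc D'g$ collapses to $-\eta(\theta)+\cc[\theta,g]-\cc\Ch^{1,0}g$, and the $\bar t$-coefficient is treated identically with $\bg$ in place of $g$.

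For the PDE~\eqref{equ_g} I would expand every term of Proposition~\ref{pde} using the above substitutions and the Leibniz rule. The terms $\bar\p[\theta,g]$ and $[\theta^{\star_{h_0}},[\theta,g]]$ then occur on both sides and cancel, and the remaining identity reduces to the stated equation once two facts are used. First, on a Riemann surface $[\theta,\eta(\theta)]=0$: locally $\theta=\theta_0\,dz$ and $\eta(\theta)=\eta_0\theta_0\,d\bar z$, so the graded bracket is proportional to $dz\wedge d\bar z+d\bar z\wedge dz=0$ (the $\theta\wedge\theta=0$ phenomenon), which kills the unwanted $-2[\theta,\eta(\theta)]$. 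Second, $\Ch^{1,0}(\theta^{\star_{h_0}})=0$, following from holomorphicity $\bar\p\theta=0$ and unitarity of the Chern connection $D_{h_0}$ through the adjoint relation $\Ch^{1,0}(\theta^{\star_{h_0}})=(\bar\p\theta)^{\star_{h_0}}=0$; combined with the Leibniz identity $\Ch^{1,0}([\theta^{\star_{h_0}},g])+[\theta^{\star_{h_0}},\Ch^{1,0}g]=[\Ch^{1,0}\theta^{\star_{h_0}},g]=0$ this removes the remaining cross terms and yields $\bar\p\Ch^{1,0}g=-2\Ch^{1,0}(\eta(\theta))-[\theta,[\theta^{\star_{h_0}},g]]$. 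The equation for $\bg$ then follows by applying $\star_{h_0}$, as already noted in Proposition~\ref{pde}.

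For the $\bar\p_t$ formula I would use that the deformed holomorphic structure is the $(0,1)$-part of the unitary factor, $\bar\p_t=D_h^{0,1}=D^{(0,1)_t}-\theta_t^{\star_h}$, where $\theta_t^{\star_h}=B\bar\om$ is computed in Lemma~\ref{Aom}. The delicate point is that the type decomposition of the fixed flat connection $D$ changes with the complex structure $\mu_t=t\eta+O(t^2)$: exactly as $\bar\p_t=\bar\p+t\,\eta\circ\p+\cdots$ on functions, the $(0,1)_t$-part of $D$ acquires the correction $t\,\eta\circ D^{(1,0)_0}=t\,\eta\circ D'=t(\eta\circ\Ch^{1,0}+\eta(\theta))$. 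Subtracting $B\bar\om$ and rewriting the metric term via $D''g=\bar\p g+[\theta^{\star_{h_0}},g]$, the two $t\,\eta(\theta)$ contributions cancel and the claimed formula appears. I expect this last step to be the main obstacle: the algebraic simplifications of the first two parts are routine once the two identities above are available, whereas the $\bar\p_t$ computation requires carefully tracking how $D$ re-splits into $X_t$-types after pulling back along $\mathrm{id}:X_0\to X_t$ and cleanly separating the genuine deformation of the holomorphic structure from the adjoint Higgs term $\theta_t^{\star_h}$.
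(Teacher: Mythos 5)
Your proposal is correct and follows essentially the same route as the paper: the paper likewise obtains the $\theta_t$ formula by direct substitution of $D'=\Ch^{1,0}+\mathrm{ad}(\theta)$ into Lemma \ref{Aom}, and derives \eqref{equ_g} by expanding each term of Proposition \ref{pde} via $D'=\Ch^{1,0}+\mathrm{ad}(\theta)$, $D''=\bar\p+\mathrm{ad}(\theta^{\star_{h_0}})$ and cancelling using exactly the two identities you cite ($[\theta,\eta(\theta)]=0$ and $\Ch^{1,0}\theta^{\star_{h_0}}=0$). The paper declares the $\bar\p_t$ formula a ``routine computation'' and omits it; your outline via $\bar\p_t=D^{(0,1)_t}-\theta_t^{\star_h}$ with the first-order correction to the type decomposition is a valid way to carry it out and reproduces the stated coefficients (up to the sign of the $\bar t\,\bar\eta\circ\bar\p$ term, on which the paper's own two statements in Proposition \ref{holtan} and Proposition \ref{thetadbar} already disagree).
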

\begin{proof}
It is straightforward to obtain the formula for $A\om$. 
Let us turn to the PDE of $g$ in Proposition \ref{pde}.
\begin{align*}
    D''D'g&=(\bar\p+\theta^{\star_{h_0}})(\Ch^{1,0}+\theta)g=\bar\p(\Ch^{1,0})g+[\theta^{\star_{h_0}},\Ch^{1,0}g]+[\theta^{\star_{h_0}},[\theta,g]]+\bar\p[\theta,g];\\
    -D'[\theta^{\star_{h_0}},g]&=-\Ch^{1,0}[\theta^{\star_{h_0}},g]-[\theta,[\theta^{\star_{h_0}},g]]=[\theta^{\star_{h_0}},\Ch^{1,0}g]-[\theta,[\theta^{\star_{h_0}},g]];\\
    D''[\theta,g]&=\bar\p[\theta,g]+[\theta^{\star_{h_0}},[\theta,g]];\ -2D'(\eta(\theta))=-2\Ch^{1,0}(\eta(\theta))\text{ as }[\theta,\eta(\theta)]=0.
\end{align*}

Hence we have $\bar\p\Ch^{1,0} g=-2\Ch^{1,0}(\eta(\theta))-[\theta,[\theta^{\star_{h_0}},g]]$. A similar argument applies to $\bg$. The routine computation for the formula of $\bar\p_t$ is omitted here.
\end{proof}

Now we apply the deformation theory established in section \ref{DT}, including Proposition \ref{holtan} and Proposition \ref{aholtan}.
\begin{theorem}\label{main1} (i) The holomorphic
derivative of the isomonodromic section $\sigma$ is given by:
\begin{align*}\Phi^{1,0}_{KS}:T_0S&\to T^{1,0}_{\sigma(0)}\Mdol,\\
\frac{\p}{\p t}&\mapsto [(-\cc\Ch^{1,0}g,-\cc[\theta^{\star_{h_0}},g],\eta)]\end{align*}
where $g$ satisfies the PDE \eqref{equ_g} in Proposition \ref{thetadbar} and $[\eta]=\rho_{KS}(\p/\p t)$;\\
(ii) The anti-holomorphic derivative of the isomonodromic section $\sigma$ is given by:
\begin{align*}\Phi^{0,1}_{KS}:T_0S&\to T_{\sigma(0)}^{0,1}\mathcal M_{\mathrm{Dol}}(X_0)\hookrightarrow T_{\sigma(0)}^{0,1}\Mdol,\\
\frac{\p}{\p t}&\mapsto [(\bar\eta(\theta^{\star_{h_0}}),0)],
\end{align*}
which is the complex conjugation of $\theta_\ast\circ\rho_{KS}(\frac{\p}{\p t})$ by Lemma \ref{thetaast}.
\end{theorem}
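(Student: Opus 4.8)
The essential work has already been done: Proposition~\ref{thetadbar} provides the complete first-order expansion of $\theta_t$ and $\bar\p_t$ for the isomonodromic family, and Proposition~\ref{holtan} together with Proposition~\ref{aholtan} tells us how to read off the holomorphic and anti-holomorphic deformation classes from such an expansion. So the plan is simply to match the expansion coefficients of Proposition~\ref{thetadbar} against the deformation-class formalism of Section~\ref{DT}, separating the $t$ (holomorphic) and $\bar t$ (anti-holomorphic) parts.

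\textbf{Part (i).} Recall from Proposition~\ref{thetadbar} that
\[
\theta_t=\theta+t\bigl(-\eta(\theta)+\tfrac12[\theta,g]-\tfrac12\Ch^{1,0}g\bigr)+O(\bar t)+O(|t|^2),
\]
\[
\bar\p_t=\bar\p+t\bigl(\eta\circ D_{h_0}^{1,0}-\tfrac12[\theta^{\star_{h_0}},g]+\tfrac12\bar\p g\bigr)+O(\bar t)+O(|t|^2).
\]
By the Claim inside the proof of Proposition~\ref{holtan}, the $(1,0)$-deformation class is the triple $(\varphi,\psi,\eta)$ read from $\theta_t=\theta-t\eta(\theta)+t\varphi+\cdots$ and $\bar\p_t=\bar\p+t\eta\circ\Ch^{1,0}-\bar t\bar\eta\circ\bar\p+t\psi+\cdots$. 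First I would extract $\varphi=-\tfrac12\Ch^{1,0}g+\tfrac12[\theta,g]$ and $\psi=-\tfrac12[\theta^{\star_{h_0}},g]+\tfrac12\bar\p g$ from the $t$-coefficients. The point I then need to make is that the $[\theta,g]$ and $\bar\p g$ terms are exactly the exact term $D^0(g,0)=([g,\theta],\bar\p g)=(-[\theta,g],\bar\p g)$ up to the factor $\tfrac12$, i.e.\ they represent a gauge transformation and hence vanish in hypercohomology. Concretely, subtracting $-\tfrac12 D^0(g,0)$ from $(\varphi,\psi)$ leaves the representative $(-\tfrac12\Ch^{1,0}g,-\tfrac12[\theta^{\star_{h_0}},g])$, which is precisely the claimed class $[(-\tfrac12\Ch^{1,0}g,-\tfrac12[\theta^{\star_{h_0}},g],\eta)]$. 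This gives $\Phi^{1,0}_{KS}$.

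\textbf{Part (ii).} For the anti-holomorphic derivative I read off the $\bar t$-coefficients of the same expansion:
\[
\theta_t=\theta+\bar t\bigl(\bar\eta(\theta^{\star_{h_0}})+\tfrac12[\theta,\bg]-\tfrac12\Ch^{1,0}\bg\bigr)+\cdots,
\]
\[
\bar\p_t=\bar\p+\bar t\bigl(\bar\eta\circ\bar\p-\tfrac12[\theta^{\star_{h_0}},\bg]+\tfrac12\bar\p\bg\bigr)+\cdots.
\]
By Lemma~\ref{realsec} the $(0,1)$-part of $\sigma_\ast(\p/\p t)$ lands in $T^{0,1}_{\sigma(0)}\mathcal M_{\mathrm{Dol}}(X_0)=\overline\Hy$, whose harmonic representatives are described after Definition~\ref{harcl} as deforming by $\theta_t=\theta+\bar t\varphi+\cdots$, $\bar\p_t=\bar\p+\bar t\psi+\cdots$. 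Comparing coefficients gives $\varphi=\bar\eta(\theta^{\star_{h_0}})+\tfrac12[\theta,\bg]-\tfrac12\Ch^{1,0}\bg$ and $\psi=\bar\eta\circ\bar\p-\tfrac12[\theta^{\star_{h_0}},\bg]+\tfrac12\bar\p\bg$. Here the $\bg$-terms assemble into $\tfrac12 d^{0c}(\bg)=\tfrac12(\Ch^{1,0}\bg,[\theta^{\star_{h_0}},\bg])$ up to sign, which is the exact class $\mathrm{Im}\,d^{0c}$ and so again drops out in $\overline\Hy$; the term $\bar\eta\circ\bar\p$ in $\psi$ must be disposed of as the anti-holomorphic analogue of the $\bar\eta\circ\bar\p$ reparametrisation term that is already subsumed into the change of complex structure on the base rather than the Higgs-bundle class. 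After removing these, the surviving representative is $(\bar\eta(\theta^{\star_{h_0}}),0)$, giving $\Phi^{0,1}_{KS}(\p/\p t)=[(\bar\eta(\theta^{\star_{h_0}}),0)]$. Finally I would invoke Lemma~\ref{thetaast}, which gives $\theta_\ast([\eta])=[(0,\eta(\theta))]$, and the complex-conjugation operator $\mathrm{bar}\colon[(0,\eta(\theta))]\mapsto[((\eta(\theta))^{\star_{h_0}},0)]=[(\bar\eta(\theta^{\star_{h_0}}),0)]$ defined before Definition~\ref{harcl}, to identify the class with $\overline{\theta_\ast\circ\rho_{KS}(\p/\p t)}$.

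\textbf{Main obstacle.} The routine part is the coefficient bookkeeping; the genuinely delicate step is the identification of which terms are exact (i.e.\ pure gauge or pure reparametrisation) versus genuinely contributing to the cohomology class. In Part~(i) I must verify that $-\tfrac12(-[\theta,g],\bar\p g)$ is exactly $-\tfrac12 D^0(g,0)$ with the correct signs, so that the representative is legitimately simplified; here $g$ is not free but is pinned down by the PDE~\eqref{equ_g}, so one should check the subtraction is consistent. In Part~(ii) the subtlety is sharper: I must argue carefully that the $\bar\eta\circ\bar\p$ contribution to $\psi$ does not alter the class in $\overline\Hy$ — this is where the separation between deformation of the complex structure of $X_0$ and deformation of the Higgs bundle is most easily conflated — and that $(\bar\eta(\theta^{\star_{h_0}}),0)$ is genuinely $d^{1c}$-closed, which uses $\bar\p(\bar\eta(\theta^{\star_{h_0}}))=0$ (equivalently the harmonicity/closedness of $\eta(\theta)$ paired against the Higgs field). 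Establishing these two closedness-and-exactness facts cleanly is the crux of the argument.
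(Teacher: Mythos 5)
Your proposal is correct and follows essentially the same route as the paper: both read the two classes off the expansion in Proposition~\ref{thetadbar}, discard the $\cc[\theta,g]$, $\cc\bar\p g$ terms (and their barred analogues) as the first-order effect of the gauge transformation $\mathrm{id}\mp\cc(tg+\bar t\bg)$, and reduce the $\bar t$-part modulo $\cc\,d^{0c}(\bg)$ to $[(\bar\eta(\theta^{\star_{h_0}}),0)]$, identified with $\overline{\theta_\ast\circ\rho_{KS}(\p/\p t)}$ via Lemma~\ref{thetaast}. The one spot to tighten is your claim that ``the $\bg$-terms assemble into $\tfrac12 d^{0c}(\bg)$'': only $(\Ch^{1,0}\bg,[\theta^{\star_{h_0}},\bg])$ is the $d^{0c}$-exact piece, while the $[\theta,\bg]$ and $\bar\p\bg$ pieces are killed by the gauge change exactly as in your Part~(i) --- the paper applies that gauge transformation to the family first and only afterwards passes to the class in $\overline{\Hy}$.
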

\begin{proof}(i) We apply a gauge transform to $(E,\bar\p,\theta)$ to eliminate terms $(\cc[\theta,tg+\bar t\bg],\cc\bar\p(tg+\bar t\bg))$. By Proposition \ref{thetadbar}, we have
\begin{align*}\bar\p_t&=\bar\p+t(\eta\circ D_{h_0}^{1,0}-\cc [\theta^{\star_{h_0}},g])+\bar t(\bar\eta\circ \bar\p-\cc[\theta^{\star_{h_0}},\bg])+O(|t|^2);\\
\theta_t&=\theta-t\eta(\theta)-t(\cc \Ch^{1,0} g)+\bar t(\bar \eta(\theta^{\star_{h_0}})-\cc \Ch^{1,0}\bg)+O(|t|^2).
\end{align*}
By Proposition \ref{holtan}, $[(-\cc\Ch^{1,0}g,-\cc [\theta^{\star_{h_0}},g],\eta)]$ is simply the holomorphic deformation class.\\
(ii) Clearly, by Proposition \ref{aholtan} and the above, $$[(\bar\eta(\theta^{\star_{h_0}})-\cc\Ch^{1,0}\bg,-\cc [\theta^{\star_{h_0}},\bg])]\in T_{\sigma(0)}^{0,1}\mathcal M_{\mathrm{Dol}}(X_0)$$ is the deformation class, which is a harmonic representative. This class is just $[(\bar\eta(\theta^{\star_{h_0}}),0)]\in\overline\Hy$ after modulo the exact term.
\end{proof}

So the class $[(0,\eta(\theta))]\in \Hy$ given by the complex conjugate of $\Phi_{KS}^{0,1}(\p/\p t)$ as above, is the obstruction for the isomonodromic section $\sigma:S\to\Mdol$ being holomorphic along $\frac{\p}{\p t}$. We have the following example where $\Phi_{KS}^{0,1}$ is not zero:

\begin{example}\label{Hiteg} Recall Hitchin's uniformization Higgs bundle $(E:=K_{X_0}^{\cc}\oplus K_{X_0}^{-\cc},\theta=\begin{pmatrix}0&1 \\0 & 0\\\end{pmatrix})$ over $X_0$ defined in \cite[Corollary 4.23]{Hit}. Clearly, $$\theta_\ast: H^1(X_0,T_{X_0})\to \Hy$$
is injective.
\end{example}

We propose the following conjecture:
\begin{conjecture}\label{nfconj}
Let $\Mdol^{0}$ be an irreducible component of $\Mdol$. Then the following statements are equivalent:\\
(a) Any isomonodromic deformation section $\sigma: S\to \Mdol^{0}$ is a holomorphic section;\\
(b) $\Mdol^{0}$ is locally trivial over $S$.
\end{conjecture}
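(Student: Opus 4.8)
The plan is to route the whole equivalence through the single infinitesimal criterion supplied by Theorem \ref{main1}. Along a base direction $\frac{\p}{\p t}$, an isomonodromic section $\sigma$ is holomorphic exactly when its anti-holomorphic derivative $\Phi^{0,1}_{KS}(\frac{\p}{\p t})$ vanishes, and by Theorem \ref{main1}(ii) together with Lemma \ref{thetaast} this class is the conjugate of
\[\theta_\ast\circ\rho_{KS}(\tfrac{\p}{\p t})=[(0,\eta(\theta))]\in\Hy.\]
Since the isomonodromic leaves foliate $\Mdol^{0}$, statement (a) is rigorously equivalent to the pointwise vanishing of $\theta_\ast\circ\rho_{KS}$ at every $(E,\bar\p,\theta)\in\Mdol^{0}$ and in every base direction — a ``non-abelian infinitesimal Torelli'' condition. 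I would keep in mind from the outset the baseline fact that $\mathcal M_{\mathrm{DR}}(\mathcal X/S)\to S$ is \emph{holomorphically} locally trivial (flatness of $\nabla_{GM}$), so that $\Psi$ already makes $\Mdol^{0}\to S$ real-analytically locally trivial; the entire content of (b) is to upgrade this to a holomorphic trivialization, and the obstruction to the upgrade is measured precisely by $\Phi^{0,1}_{KS}$.

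For (b)$\Rightarrow$(a) I would argue that a holomorphic local trivialization makes the complex structure of the fibres $\mathcal M_{\mathrm{Dol}}(X_s)$ independent of $s$, and then show that the first-order variation of the Higgs bundle induced by the Kodaira--Spencer class of the curve — which is exactly $\theta_\ast\circ\rho_{KS}$ — leaves no room for the fibre object to move, forcing it to vanish; the criterion above then yields (a). The delicate point already here is to match the abstract product structure against the specific horizontal class appearing in the exact sequence \eqref{shortH} of Proposition \ref{holtan}, so even this direction is not purely formal: it requires reconciling the product holomorphic structure on $\Mdol^{0}$ with the Atiyah-type complex of Lemma \ref{2termcx}.

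The hard direction is (a)$\Rightarrow$(b), which is where I expect the main obstacle. The vanishing $\Phi^{0,1}_{KS}\equiv 0$ says exactly that the real-analytic trivialization $\Phi\colon S\times\mathcal M^{0}_{\mathrm{Dol}}(X_0)\to\Mdol^{0}$ obtained by flowing along the isomonodromic foliation is holomorphic in the $S$-directions, i.e.\ every leaf is a holomorphic section. What remains is to promote this to holomorphy in the fibre directions, equivalently to show that the real-analytic isomonodromic foliation is in fact a holomorphic foliation, so that the leaf through a point depends holomorphically on its initial value in the fibre. A real-analytic family of holomorphic sections need not assemble into a holomorphic foliation, so genuine Hodge-theoretic input is required: from the vanishing of $\theta_\ast\circ\rho_{KS}$ over the whole component I would try to deduce that the period data of the fibres $X_s$ are constant in $s$, and then integrate this infinitesimal constancy into an honest local triviality.

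The crux is precisely this last integration step. In the rank one case it is the classical Torelli theorem (Remark \ref{torelli}): there $\theta_\ast\circ\rho_{KS}(\frac{\p}{\p t})$ is the cup product $\rho_{KS}(\frac{\p}{\p t})\cup\theta$, so its vanishing for all $\theta\in H^0(X_0,\Omega^1_{X_0})$ is infinitesimal Torelli and forces $\rho_{KS}=0$, hence local triviality of the Jacobian. In higher rank there is no off-the-shelf non-abelian Torelli theorem; moreover a component $\Mdol^{0}$ may consist of Higgs bundles too special to detect $\rho_{KS}$ at a single point (in contrast with the uniformizing component of Example \ref{Hiteg}, where $\theta_\ast$ is injective), so the argument must exploit the vanishing over the \emph{entire} component rather than fibrewise. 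Converting this component-wide infinitesimal vanishing into a holomorphically flat structure on $\Mdol^{0}\to S$ — that is, passing from infinitesimal to genuine triviality through the transcendental correspondence $\Psi$ — is exactly what keeps the statement at the level of a conjecture.
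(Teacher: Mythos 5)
The statement you are addressing is Conjecture~\ref{nfconj}: the paper offers no proof of it, only a verification of the rank-one case in Remark~\ref{torelli} via the classical Torelli theorem. So there is no proof in the paper to compare against, and your text is, by your own admission, a strategy outline rather than a proof. That said, your reduction is the correct one and matches the paper's reasoning where it exists: by Theorem~\ref{main1}(ii) and Lemma~\ref{thetaast}, statement (a) is equivalent to the vanishing of $\theta_\ast\circ\rho_{KS}$ at every point of $\Mdol^{0}$ and in every base direction, and in rank one this vanishing is exactly the hypothesis of infinitesimal Torelli, which is how the paper settles that case. The obstacle you isolate for (a)$\Rightarrow$(b) --- the absence of a non-abelian Torelli theorem allowing one to integrate component-wide infinitesimal vanishing into holomorphic local triviality of $\Mdol^{0}\to S$ --- is precisely why the statement remains a conjecture; naming it does not close it.

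One further gap you should not gloss over: your sketch of (b)$\Rightarrow$(a) asserts that a holomorphic trivialization ``leaves no room for the fibre object to move, forcing'' $\theta_\ast\circ\rho_{KS}$ to vanish. This does not follow. A holomorphic local trivialization $\Mdol^{0}\cong S\times M_0$ supplies \emph{some} holomorphic section through each point, but the isomonodromic section through that point is a specific leaf of a specific real-analytic foliation, and nothing identifies it with a horizontal section of the chosen product structure. In rank one the implication is rescued only because local triviality of $\mathrm{Jac}(\mathcal X/S)$ forces $\rho_{KS}=0$ (Torelli again), after which $\theta_\ast\circ\rho_{KS}=0$ trivially; in higher rank no such argument is available, so both directions of the equivalence are genuinely open.
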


\begin{remark}\label{torelli}In the rank 1 case, i.e., $\mathcal M_{\mathrm{Dol}}(\mathcal X/S,\mathbb C^\ast)=f_\ast\Omega^1_{\mathcal X/S}\times_S \mathrm{Jac}(\mathcal X/S)$, we have:\\
1. By Theorem \ref{main1}, the complex conjugation of $\Phi^{0,1}_{KS}$ is exactly 
\begin{align*}\overline{\Phi^{0,1}_{KS}}:p^\ast TS&\to T^{1,0}(\mathcal M_{\mathrm{Dol}}(\mathcal X/S,\mathbb C^\ast)/S)\\
\{(E,\bar\p,\theta),\frac{\p}{\p t}\}&\mapsto [(0,\eta(\theta))].\end{align*}
Recall $f:\mathcal X\to S$ is a projective family. We have the variation of Hodge structures $R^1f_\ast\mathbb C\otimes \mathcal O_S$ over $S$. Then $\mathcal E:=\mathcal E^{1,0}\oplus \mathcal E^{0,1}=f_\ast\Omega^1_{\mathcal X/S}\oplus R^1f_\ast \mathcal O_\mathcal X$ over $S$ is the Hodge bundle, endowed with the Higgs field $$\varphi:TS\otimes \mathcal E\to TS\otimes \mathcal E^{1,0}\to \mathcal E^{0,1}\hookrightarrow \mathcal E.$$
Over any $s\in S$, $\varphi$ is just the cup-product $\rho_{KS}(\frac{\p}{\p t})\times H^{1,0}(X_s)\to H^{0,1}(X_s)$, where $\frac{\p}{\p t}\in T_sS.$
Note that we have the exponential map $\exp:\mathcal E^{0,1}\to\mathrm{Jac}(\mathcal X/S),$ which induces $$\exp:\mathcal E^{0,1}\times_S \mathcal E^{0,1}\to T^{1,0}(\mathrm{Jac}(\mathcal X/S)/S).$$
Then we have 
\[\begin{tikzcd}TS\otimes(\mathcal E^{1,0}\oplus \mathcal E^{0,1} )\arrow[d,"id\times_S\exp"]\arrow[r,"id_{\mathcal E}\times_S\varphi"] & \mathcal E\times_S \mathcal E\arrow[d,"id_{\mathcal E^{1,0}\times_S \mathcal E^{1,0}}\oplus \exp"]\\
p^\ast TS\arrow[r,"\overline{\Phi^{0,1}_{KS}}"] & T^{1,0}(\mathcal E^{1,0}/S)\oplus T^{1,0}(\mathrm{Jac}(\mathcal X/S)/S)
\end{tikzcd}\]
So we obtain $\varphi$ from $\overline{\Phi^{0,1}_{KS}}$.\\
2. Thus Conjecture \ref{nfconj} holds in the rank 1 case by the above argument and the classical Torelli theorem for compact Riemann surfaces.\\
3. One cannot expect Conjecture \ref{nfconj} to be true for some \textbf{algebraic subvariety} of $\Mdol$. Note that in the rank 1 case, $\mathrm{Jac}(\mathcal X/S)$ is an algebraic subvariety of $\Mdol$ by considering the unitary Higgs bundle. Every isomonodromy deformation of a unitary Higgs bundle is holomorphic since the Higgs field is always zero, and one can use Theorem \ref{main1}. However, $\mathrm{Jac}(\mathcal X/S)\to S$ is not locally trivial.
\end{remark}

\subsection{Isomonodromic deformation of a graded Higgs bundle}
Let $(E,\bar\p,\theta)$ be a \textbf{graded Higgs bundle} on $X_0$, i.e., $(E=\bigoplus\limits_{p=0}^kE^{p,k-p},\theta=\bigoplus\limits_{p=1}^k\theta^{p,k-p})$ such that $\theta^{p,k-p}:E^{p,k-p}\to E^{p-1,k-p+1}\otimes \Omega_{X_0}^1$. Let $\sigma: S\to\Mdol$ be the isomonodromic deformation of $(E,\bar\p,\theta)$ defined in section \ref{Isomd}. Then we have a family of Higgs bundles $(E,\bar\p_t,\theta_t)$ on $X_t$. 

In \cite[Theorem 1.4]{KYZ}, they proved that $(E,\bar\p_t,\theta_t)$ on $X_t$ is a graded Higgs bundle for any sufficiently small $t$ if the class $[(0,\eta(\theta))]\in \Hy$ defined by $\theta_\ast\circ\rho_{KS}(\frac{\p}{\p t})$ vanishes. Equivalently, if the anti-holomorphic Kodaira-Spencer map $\Phi^{0,1}$ defined in Theorem \ref{main1} is non-zero, $(E,\bar\p_t,\theta_t)$ is not a graded Higgs bundle.

We aim to show that for sufficiently small $t$, the Higgs field $\theta_t$ is \textbf{non-nilpotent} along the tangent direction $\frac{\p}{\p t}\in T_0S$ if $\Phi^{0,1}$ is non-zero along this direction.

Firstly, we have a good expression for $g$ in Proposition \ref{thetadbar}:

\begin{lemma}\label{g} The endomorphism $g$ has the decomposition $g=\bigoplus\limits_{p=1}^kg^{p,k-p}$ where $g^{p,k-p}:E^{p,k-p}\to E^{p-1,k-p+1}$ is the composition of $g|_{E^{p,k-p}}$ with $E\to E^{p-1,k-p+1}$.\end{lemma}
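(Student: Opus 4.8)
The plan is to exploit the fact that, because $(E,\bar\p,\theta)$ is a graded Higgs bundle, every operator occurring in the defining PDE \eqref{equ_g} for $g$ is homogeneous for the Hodge grading, and then simply to match homogeneous components. I would grade $\mathrm{End}E=\bigoplus_d(\mathrm{End}E)_d$ by declaring $(\mathrm{End}E)_d=\bigoplus_p\mathrm{Hom}(E^{p,k-p},E^{p+d,k-p-d})$, so that an endomorphism has degree $d$ when it raises the Hodge index by $d$; the assertion is exactly that $g$ is homogeneous of degree $-1$.

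First I would record the degrees of the relevant objects. Since $(E,\bar\p,\theta)$ is a system of Hodge bundles, Simpson's theorem guarantees that the harmonic metric $h_0$ makes the decomposition $E=\bigoplus_p E^{p,k-p}$ orthogonal and that each $E^{p,k-p}$ is a holomorphic subbundle; consequently the Chern connection is the direct sum of the Chern connections of the graded pieces, so both $\bar\p$ and $\Ch^{1,0}$ have degree $0$. By definition $\theta$ has degree $-1$, hence its adjoint $\theta^{\star_{h_0}}$ has degree $+1$, while the contraction $\eta(\theta)$ (which only alters the form type, not the endomorphism part) again has degree $-1$. Thus in the first equation of \eqref{equ_g} the linear operator $g\mapsto\bar\p\Ch^{1,0}g+[\theta,[\theta^{\star_{h_0}},g]]$ preserves degree, whereas the inhomogeneous term $-2\Ch^{1,0}(\eta(\theta))$ is homogeneous of degree $-1$.

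Writing $g=\sum_d g_d$ in homogeneous components and matching degrees, I find that the degree-$(-1)$ part $g_{-1}$ already satisfies the full PDE \eqref{equ_g}, while for $d\neq-1$ the component $g_d$ solves the homogeneous equation $\bar\p\Ch^{1,0}g_d+[\theta,[\theta^{\star_{h_0}},g_d]]=0$. I would then pair this homogeneous equation with $g_d$ using $h_0$ and integrate over $X_0$: a Stokes integration by parts converts the first term into $\|\Ch^{1,0}g_d\|_{L^2}^2$ and the bracket term into $\|[\theta^{\star_{h_0}},g_d]\|_{L^2}^2$ (carrying the same sign, since both acquire the factor $dz\wedge d\bar z$), forcing $\Ch^{1,0}g_d=0$ and $[\theta^{\star_{h_0}},g_d]=0$. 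Taking $h_0$-adjoints shows that $g_d^{\star_{h_0}}$ is a holomorphic endomorphism commuting with $\theta$, i.e.\ an endomorphism of the Higgs bundle; by the Hitchin--Simpson correspondence it is flat, hence underlies a morphism of the associated $\mathbb{C}$-variation of Hodge structure and is therefore of pure Hodge type $(0,0)$, that is, of degree $0$. This eliminates every $g_d$ with $d\neq 0,-1$.

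The step I expect to be the main obstacle is excluding the degree-$0$ component $g_0$, since for a reducible (merely polystable) graded Higgs bundle there do exist nonzero flat endomorphisms of degree $0$. I would resolve this via the $\mathrm{SL}$-normalization together with the canonical choice of harmonic metric: the endomorphism $g$ governing the first-order variation is taken $h_0$-orthogonal to the infinitesimal automorphisms of $(E,\bar\p,\theta)$, which are precisely the flat endomorphisms; since $g_0$ is both a flat endomorphism (by the previous step) and the degree-$0$ part of such a $g$, it must vanish. Equivalently, one can invoke the uniqueness of $g$ asserted in Theorem~\ref{main1}: the truncation $g_{-1}$ satisfies \eqref{equ_g} and the same normalization (being of degree $-1$ it is automatically trace-free and $h_0$-orthogonal to the degree-$0$ flat endomorphisms), so uniqueness forces $g=g_{-1}$. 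Either way one concludes $g=g_{-1}=\bigoplus_p g^{p,k-p}$ with $g^{p,k-p}\colon E^{p,k-p}\to E^{p-1,k-p+1}$, which is the claim.
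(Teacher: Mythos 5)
Your proposal is correct and, at its core, is the same argument as the paper's: since the harmonic metric $h_0$ is block-diagonal for the Hodge grading, the PDE \eqref{equ_g} decouples by degree with the inhomogeneous term $-2\Ch^{1,0}(\eta(\theta))$ concentrated in degree $-1$, and the uniqueness of $g$ (inherited from the uniqueness of the harmonic metric) then forces the components of every other degree to vanish. The Bochner/fixed-part detour you add to treat the homogeneous components directly is a valid elaboration but is redundant once that uniqueness is invoked, which is exactly how the paper concludes.
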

\begin{proof}By a direct check and the fact that $h_0=(\bigoplus h_0|_{E^{p,k-p}})$ is block-wise diagonal (proved in \cite[Lemma 3.5]{Li}), the PDE \eqref{equ_g} for $g$ in Proposition \ref{thetadbar} is linear for $g|_{E^{p,k-p}}$ composed with $E\to E^{q,k-q}$ for any $q\ne p-1$. By the uniqueness of the solution of the harmonic metric equation \eqref{Hmetric}, we obtain the desired result.
\end{proof}

\begin{theorem}\label{nonnil} Assume (a) $\sigma(0):=(E,\bar\p,\theta)$ is a graded Higgs bundle on $X$, and \\(b) the non-abelian Kodaira-Spencer map $\Phi^{0,1}_{KS}$ is non-zero at $\sigma(0)$ for some $\frac{\p}{\p t}\in T_0S$.\\
Then the isomonodromic deformation of $(E,\bar\p,\theta)$ along the direction $\frac{\p}{\p t}\in T_0S$ is always non-nilpotent.
\end{theorem}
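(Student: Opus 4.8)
The plan is to detect non-nilpotency through the relative Hitchin map $h\colon\Mdol\to\mathbf B=\bigoplus_{i=2}^{n}f_\ast(\Omega^1_{\mathcal X/S})^{\otimes i}$, whose zero locus is exactly the nilpotent cone. Thus $\theta_t$ is nilpotent if and only if $\mathrm{tr}(\theta_t^{\,i})=0$ for all $i$, and since $\sigma(0)$ is graded (so $h(\sigma(0))=0$), it suffices to show that the quadratic-differential component $\mathrm{tr}(\theta_t^{\,2})\in H^0(X_t,(\Omega^1_{X_t})^{\otimes2})$ does not vanish identically for small $t\neq0$ along $\frac{\p}{\p t}$. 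Because $\mathrm{tr}(\theta_t^{\,2})$ is gauge invariant, I would compute it in the gauge of Theorem~\ref{main1}(ii), in which the anti-holomorphic Higgs variation is the harmonic representative $\bar\eta(\theta^{\star_{h_0}})$ of $\Phi^{0,1}_{KS}(\frac{\p}{\p t})$, the metric-correction terms $\cc\Ch^{1,0}\bg$ and $\cc[\theta^{\star_{h_0}},\bg]$ being absorbed into the exact part $d^{0c}(\cc\bg)$.

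The key structural input is a weight (grading) bookkeeping. Writing $E=\bigoplus_pE^{p,k-p}$ and assigning $\theta$ weight $-1$, $\theta^{\star_{h_0}}$ weight $+1$, and $\Ch^{1,0}$ weight $0$ (as $h_0$ is block diagonal by \cite[Lemma 3.5]{Li}), Lemma~\ref{g} gives $g$ weight $-1$ and $\bg$ weight $+1$. Expanding $\theta_t=\theta+t\,\alpha+\bar t\,\beta+O(|t|^2)$ via Proposition~\ref{thetadbar}, the holomorphic variation $\alpha$ consists of weight $\le-1$ terms only, so $\theta\,\alpha$ has strictly negative weight and contributes nothing to the trace; hence the $t$-linear part of $\mathrm{tr}(\theta_t^{\,2})$ vanishes. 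In the harmonic gauge the only surviving weight $+1$ piece of $\beta$ is $\bar\eta(\theta^{\star_{h_0}})$, whence
\begin{equation*}
\mathrm{tr}(\theta_t^{\,2})=2\bar t\,\mathrm{tr}\!\big(\theta\,\bar\eta(\theta^{\star_{h_0}})\big)\,\omega_t^{\otimes2}+O(|t|^2),
\end{equation*}
and it remains to prove the quadratic differential $q:=\mathrm{tr}(\theta\,\bar\eta(\theta^{\star_{h_0}}))$ is not identically zero; this is precisely the assertion that the image of $\Phi^{0,1}_{KS}$ is horizontal for $h$.

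To see $q\neq0$ I would pair it, via Serre duality $H^0(X_0,K_{X_0}^{2})\times H^1(X_0,T_{X_0})\to\mathbb C$, with the Kodaira--Spencer class $[\eta]=\rho_{KS}(\frac{\p}{\p t})$. In a local holomorphic coordinate, writing $\theta=\theta_z\,dz$, $\eta=\eta^z_{\bar z}\,\p_z\,d\bar z$, and using $(\theta^{\star_{h_0}})_{\bar z}=(\theta_z)^{\star_{h_0}}$, one gets
\begin{equation*}
\langle q,[\eta]\rangle=\int_{X_0}\mathrm{tr}\!\big(\theta_z(\theta_z)^{\star_{h_0}}\big)\,|\eta^z_{\bar z}|^2\,dz\wedge d\bar z,
\end{equation*}
whose integrand is the pointwise non-negative quantity $\|\theta_z\|^2_{h_0}\,|\eta^z_{\bar z}|^2$. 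This is the Bochner-type positivity. Since $\Phi^{0,1}_{KS}(\frac{\p}{\p t})\neq0$, the conjugate class $\theta_\ast\rho_{KS}(\frac{\p}{\p t})=[(0,\eta(\theta))]\in\Hy$ (Lemma~\ref{thetaast}) is non-zero, so $\eta(\theta)=\theta_z\eta^z_{\bar z}\,d\bar z\not\equiv0$; hence the integral is strictly positive, forcing $q\neq0$ and therefore $\mathrm{tr}(\theta_t^{\,2})\not\equiv0$ for small $t\neq0$. This yields non-nilpotency.

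The main obstacle is the second step: one must rigorously justify that, inside the gauge-invariant $\mathrm{tr}(\theta_t^{\,2})$, every contribution of the first-order deformation of the harmonic metric (the terms built from $g,\bg$ and governed by the PDE~\eqref{equ_g}) either carries negative weight or is gauge-exact, so that the unique surviving leading term is the weight-raising piece $\bar\eta(\theta^{\star_{h_0}})$. It is here that the graded hypothesis (a) and Lemma~\ref{g} are indispensable, and it is also the step I expect to resist extension to the merely-nilpotent setting of Conjecture~\ref{conj-D}, where $g$ need not be block-strictly-lower-triangular and the weight argument breaks down. A secondary point to verify is that $q$ is a genuine holomorphic quadratic differential on $X_0$ (so Serre duality applies), which follows from its being the leading anti-holomorphic coefficient of the holomorphic family $t\mapsto\mathrm{tr}(\theta_t^{\,2})\in H^0(X_t,K_{X_t}^2)$.
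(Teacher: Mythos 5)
Your overall strategy coincides with the paper's (expand $\mathrm{tr}(\theta_t^2)$ to first order, use the grading via Lemma~\ref{g} to kill the $t$-linear term, detect the $\bar t$-linear term by a positivity/pairing argument), but there is a genuine gap at exactly the step you flag as the main obstacle. You discard the metric-correction term $-\cc\Ch^{1,0}\bg$ from the $\bar t$-coefficient of $\theta_t$ on the grounds that $(\cc\Ch^{1,0}\bg,\cc[\theta^{\star_{h_0}},\bg])=d^{0c}(\cc\bg)$ is exact and $\mathrm{tr}(\theta_t^2)$ is gauge invariant. But exactness in the \emph{conjugate} complex is not gauge-triviality of the corresponding perturbation of $(\bar\p,\theta)$: an honest gauge transformation $G_t=1+\bar t v+\cdots$ changes the $\bar t$-coefficient of $\theta_t$ by $[v,\theta]$, not by $\Ch^{1,0}v$. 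Gauge invariance (or the weight count) lets you drop $\cc[\theta,\bg]$, which has weight $0$ and is traceless against $\theta$, but $\Ch^{1,0}\bg$ carries the same weight $+1$ as $\bar\eta(\theta^{\star_{h_0}})$ (since $h_0$ is block diagonal, $\Ch^{1,0}$ preserves the grading and $\bg$ raises it by one), so it survives the trace. The correct coefficient is $\mathrm{tr}\big(\theta(2\bar\eta(\theta^{\star_{h_0}})-\Ch^{1,0}\bg)\big)$, and it is this combination --- not your $q=\mathrm{tr}(\theta\,\bar\eta(\theta^{\star_{h_0}}))$ --- that is $\bar\p$-closed: its $\bar\p$-derivative reduces via the PDE~\eqref{equ_g} to $\mathrm{tr}(\theta[\theta,[\theta^{\star_{h_0}},\bg]])=0$. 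So your ``secondary point'' (holomorphicity of $q$, needed for the Serre-duality pairing) also fails for $q$ alone.

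The omitted term is not negligible in the pairing: integrating $\int_{X_0}\mathrm{tr}(\eta(\theta)\wedge\Ch^{1,0}\bg)$ by parts and substituting \eqref{equ_g} produces contributions of size $\|\Ch^{1,0}g\|^2_{L^2}$ and $\|[\theta,g]\|^2_{L^2}$, a priori comparable to your positive term $\int\|\theta_z\|^2_{h_0}|\eta^z_{\bar z}|^2$. Controlling it is precisely the content of the paper's Bochner argument: setting $\phi:=2\bar\eta(\theta^{\star_{h_0}})-\Ch^{1,0}\bg$, the K\"ahler identity for $\Ch^{1,0}$ together with \eqref{equ_g} shows the cross term $\langle\phi,\Ch^{1,0}\bg\rangle$ vanishes, so $\|2\bar\eta(\theta^{\star_{h_0}})\|^2=\|\Ch^{1,0}\bg\|^2+\|\phi\|^2$, and Cauchy--Schwarz then forces $\mathrm{tr}(\theta\phi)\not\equiv0$ unless $\phi\equiv0$; the degenerate case $\phi\equiv0$ is excluded separately because it implies $[\theta,g]\equiv0$, contradicting $[(0,\eta(\theta))]\neq0$. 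Your final display is the correct ``leading'' positivity and correctly identifies the mechanism (horizontality of $\Phi^{0,1}_{KS}$ for the Hitchin map), but without an argument for the $\Ch^{1,0}\bg$ contribution the proof is incomplete.
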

\begin{proof}Let $[\eta]:=\rho_{KS}(\frac{\p}{\p t})$. We choose a representative $\eta\in \mathcal A^{0,1}(T_{X_0})$ as usual. Then we have $[(0,\eta(\theta)]\ne0\in\Hy$ by Lemma \ref{thetaast}.

Note that $\bar \eta(\theta^{\star_{h_0}})$ consists only of the following smooth sections for $p=0,1,\cdots,k-1$
$$\bar \eta((\theta^{p+1,k-p-1})^{\star_{h_0}}):E^{p,k-p}\to E^{p+1,k-p-1}\otimes \Omega_{X_0}^{1,0},$$
since the harmonic metric $h_0$ is block-wise diagonal.
By Lemma \ref{g}, one has 
\begin{itemize}
\item $\eta(\theta)$ and $\Ch^{1,0}g$ map $E^{p,k-p}$ to $E^{p-1,k-p+1}\otimes \Omega^{1,0}_{X_0}$; 
\item $[\theta,g]$ maps $E^{p,k-p}$ to $E^{p-2,k-p+2}\otimes \Omega^{1,0}_{X_0}$; 
\item $[\theta,\bg]$ maps $E^{p,k-p}$ to $E^{p,k-p}\otimes \Omega^{0,1}_{X_0}$; 
\item $\bar \eta(\theta^{\star_{h_0}})-\cc\Ch^{1,0}\bg$ maps $E^{p,k-p}$ to $E^{p+1,k-p-1}\otimes \Omega^{0,1}_{X_0}$.
\end{itemize}
Then we can compute $\mathrm{tr}(\theta_t^2)$:
\begin{align*}\mathrm{tr}(\theta_t^2)=&\bar t\cdot \mathrm{tr}(\theta(\bar \eta(\theta^{\star_{h_0}})-\cc\Ch^{1,0}\bg)+(\bar \eta(\theta^{\star_{h_0}})-\cc\Ch^{1,0}\bg)\theta)+O(|t|^2)\\
=&\bar t\cdot \mathrm{tr}(\theta(2\bar \eta(\theta^{\star_{h_0}})-\Ch^{1,0}\bg))+O(|t|^2).\end{align*}
It suffices to prove $\mathrm{tr}(\theta(2\bar \eta(\theta^{\star_{h_0}})-\Ch^{1,0}\bg))\not\equiv 0.$
By the PDE \eqref{equ_g} for $\bg$ in Proposition \ref{thetadbar}, we may assume
$$2\bar \eta(\theta^{\star_{h_0}})-\Ch^{1,0}\bg=\phi\in \mathcal A^{1,0}(\mathrm{End}E) \text{ with } \bar\p\phi=[\theta,[\theta^{\star_{h_0}},\bg]].$$
For any $\varphi_1,\varphi_2\in \Ehol$, we have their inner product$$ i\int_{X_0}\mathrm{tr}(\varphi_1\wedge\varphi_2^{\star_{h_0}})\overset{\text{if }\varphi_1=\varphi_2}{=}\|\mathrm{tr}(\varphi_1\wedge\varphi_1^{\star_{h_0}})\|_{L^1}.$$
Thus \begin{align*}&\|4\mathrm{tr}(\bar\eta(\theta^{\star_{h_0}})\wedge(\bar\eta(\theta^{\star_{h_0}}))^{\star_{h_0}})\|_{L^1}\\=&\|\mathrm{tr}(\Ch^{1,0}\bg\wedge(\Ch^{1,0}\bg)^{\star_{h_0}})\|_{L^1}+\|\mathrm{tr}(\phi\wedge\phi^{\star_{h_0}})\|_{L^1}+2\mathrm{Re}(i\int_{X_0}\mathrm{tr} (\phi\wedge (\Ch^{1,0}\bg)^{\star_{h_0}})).
\end{align*}
By the K\"ahler identity for $\Ch^{1,0}$ (see \cite{Simp92} and \cite[Remark 9.2]{Gui}), we have
\begin{align*}&2\mathrm{Re}(i\int_{X_0}\mathrm{tr} (\phi\wedge (\Ch^{1,0}\bg)^{\star_{h_0}}))=2\mathrm{Re}(i\int_{X_0}\mathrm{tr} (i\Lambda_{\omega_0}\bar\p\phi\cdot(\bg)^{\star_{h_0}}))
\\=&2\mathrm{Re}(i\int_{X_0}i\Lambda_{\omega_0}\mathrm{tr} ([\theta,[\theta^{\star_{h_0}},\bg]]\cdot g)
=2\mathrm{Re}(i\int_{X_0}i\Lambda_{\omega_0}\mathrm{tr} ([\theta,g]\wedge[\theta^{\star_{h_0}},\bg])\\=&-2\mathrm{Re}(i\int_{X_0}i\Lambda_{\omega_0}\mathrm{tr} ([\theta,g]\wedge[\theta,g]^{\star_{h_0}})=0.
\end{align*}
Hence we have $\|4\mathrm{tr}(\bar\eta(\theta^{\star_{h_0}})\wedge(\bar\eta(\theta^{\star_{h_0}}))^{\star_{h_0}})\|_{L^1}>\|\mathrm{tr}(\Ch^{1,0}\bg\wedge(\Ch^{1,0}\bg)^{\star_{h_0}})\|_{L^1}$ unless $\phi\equiv 0$. Thus by Cauchy's inequality
\begin{align*}\|4\mathrm{tr}(\bar\eta(\theta^{\star_{h_0}})\wedge(\bar\eta(\theta^{\star_{h_0}}))^{\star_{h_0}})\|_{L^1}>\|2\mathrm{tr}(\Ch^{1,0}\bg\wedge(\bar\eta(\theta^{\star_{h_0}}))^{\star_{h_0}})\|_{L^1}.
\end{align*}
This implies if $\phi\not\equiv0$, then $\|\mathrm{tr}((2\bar\eta(\theta^{\star_{h_0}})-\Ch^{1,0}\bg)\wedge\eta(\theta))\|_{L^1}\ne0.$ Then $\mathrm{tr}(\theta(2\bar \eta(\theta^{\star_{h_0}})-\Ch^{1,0}\bg))\not\equiv 0.$ 

Now we focus on the case where $\phi\equiv 0$, and we have $\bar \eta(\theta^{\star_{h_0}})=\Ch^{1,0}\bg$, i.e., $\eta(\theta)=\bar\p g$. Thus $$0=-i\int_{X_0}\mathrm{tr}(g[\theta,[\theta^{\star_{h_0}},\bg]])=i\int_{X_0}\mathrm{tr}([\theta,g]\wedge[\theta,g]^{\star_{h_0}}),$$ which implies $[\theta,g]\equiv 0$. This contradicts the assumption that $[(0,\eta(\theta))]\in \Hy$ is not zero.
\end{proof}
We propose the following conjecture, as a generalization of Theorem \ref{nonnil}.
\begin{conjecture}\label{nil_nonnil}Assume (a) $\sigma(0):=(E,\bar\p,\theta)$ is a nilpotent Higgs bundle on $X_0$ and \\(b) the non-abelian Kodaira-Spencer map $\Phi^{0,1}_{KS}$ is non-zero at $\sigma(0)$ for some $\frac{\p}{\p t}\in T_0S$.\\
Then the isomonodromic deformation of $(E,\bar\p,\theta)$ is always non-nilpotent along the direction $\frac{\p}{\p t}\in T_0S$.
\end{conjecture}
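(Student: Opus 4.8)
The plan is to detect non-nilpotency through the Hitchin map $\mathrm{Hit}\colon(E,\bar\p,\theta)\mapsto(\mathrm{tr}(\theta^2),\dots,\mathrm{tr}(\theta^n))$, whose zero locus is the nilpotent cone. Since $\sigma(0)$ is nilpotent we have $\mathrm{Hit}(\sigma(0))=0$, so it suffices to produce one index $j$ for which the first-order term of $\mathrm{tr}(\theta_t^{\,j})$ along $\p/\p t$ does not vanish identically: then $\theta_t$ is non-nilpotent for small $t\neq0$. I would try $j=2$ first. Feeding the expansion of $\theta_t$ from Theorem~\ref{main1} and Proposition~\ref{thetadbar} into $\mathrm{tr}(\theta_t^2)$, regarded as a section of $K_{X_t}^{2}$ pulled back to $X_0$, and using cyclicity of the trace together with the nilpotency relation $\mathrm{tr}(\theta^2)=0$, I expect
\[
\mathrm{tr}(\theta_t^2)=t\,P+\bar t\,Q+O(|t|^2),\quad P=-\mathrm{tr}\bigl(\theta\,\Ch^{1,0}g\bigr),\quad Q=\mathrm{tr}\bigl(\theta(2\bar\eta(\theta^{\star_{h_0}})-\Ch^{1,0}\bg)\bigr),
\]
where $g,\bg$ solve the PDE~\eqref{equ_g}. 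A short computation using $\bar\p\theta=0$, \eqref{equ_g}, and $\mathrm{tr}(\theta^2)=0$ shows that both $P$ and $Q$ are holomorphic quadratic differentials on $X_0$, and the first-order term fails to vanish precisely when $P$ and $Q$ are not both zero.

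The heart of the matter is that $Q\not\equiv0$, and here I would reuse the Bochner computation of Theorem~\ref{nonnil} almost verbatim, observing that it never actually uses the Hodge grading. Writing $\phi:=2\bar\eta(\theta^{\star_{h_0}})-\Ch^{1,0}\bg$, the equation~\eqref{equ_g} for $\bg$ gives $\bar\p\phi=[\theta,[\theta^{\star_{h_0}},\bg]]$, and the three ingredients are grading-independent: the Kähler identity for $\Ch^{1,0}$ makes the cross term $2\,\mathrm{Re}\bigl(i\int_{X_0}\mathrm{tr}(\phi\wedge(\Ch^{1,0}\bg)^{\star_{h_0}})\bigr)$ equal to the real part of a purely imaginary integral, hence zero; the Pythagorean identity $\|2\bar\eta(\theta^{\star_{h_0}})\|^2=\|\Ch^{1,0}\bg\|^2+\|\phi\|^2$ together with Cauchy--Schwarz then forces $\langle\phi,\bar\eta(\theta^{\star_{h_0}})\rangle\neq0$ unless $\phi\equiv0$; and the degenerate case $\phi\equiv0$ yields $[\theta,g]=0$ and $\eta(\theta)=\bar\p g$, exhibiting $[(0,\eta(\theta))]$ as an exact class and contradicting hypothesis~(b). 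These steps use only $\bg=g^{\star_{h_0}}$, the PDE, and $[(0,\eta(\theta))]\neq0$ in $\Hy$, so $Q\not\equiv0$ holds for \emph{any} nilpotent Higgs bundle satisfying (b).

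What the grading contributed in Theorem~\ref{nonnil}, and what is lost here, is the vanishing of the holomorphic term: Lemma~\ref{g} and the block-diagonality of $h_0$ forced $P\equiv0$, so that $\mathrm{tr}(\theta_t^2)=\bar t\,Q+O(|t|^2)$ and $Q\not\equiv0$ closed the argument at once. Without a grading, $P=-\mathrm{tr}(\theta\,\Ch^{1,0}g)$ need not vanish — a rank-two Jordan-type check suggests it is generically nonzero — and the first-order term $tP+\bar tQ$ could in principle vanish identically along the prescribed direction: as holomorphic quadratic differentials this occurs exactly when $P=-e^{-2i\alpha}Q$, where $e^{i\alpha}$ is the phase of $\p/\p t$. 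Excluding this resonance is the main obstacle, and it is delicate because $P$ is transcendental: it depends on the solution $g$ of~\eqref{equ_g} with no closed algebraic form, exactly as noted for $\Phi^{1,0}_{KS}$ after Theorem~\ref{main1}.

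To overcome this I would, in high enough rank, pass to the full Hitchin map and invoke the horizontal-direction property recorded after Theorem~\ref{nonnil}: the anti-holomorphic displacement $(Q_j)_j$ is a nonzero \emph{horizontal} tangent vector of the Hitchin base, and one then hopes that the holomorphic displacement $(P_j)_j$ cannot be a single scalar multiple of it simultaneously across all $j$. In low rank, however — already for $\mathrm{SL}(2,\C)$, where $\mathrm{tr}(\theta^2)$ is the only coefficient — this route is unavailable, and the cancellation can only be excluded by a finer analysis relating $P$ and $Q$ directly (for instance a Bochner identity tying the solution $g$ of the first equation in~\eqref{equ_g} to the solution $\bg$ of the second), or by descending to the second-order term of $\mathrm{tr}(\theta_t^2)$. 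This is the hardest part: turning the horizontality of $\Phi^{0,1}_{KS}$ into a genuine non-cancellation statement, and thereby controlling the transcendental holomorphic term $P$, is where the real difficulty of relaxing hypothesis~(a) is concentrated; the trace expansions of the first paragraph and the Bochner estimate giving $Q\not\equiv0$ are routine by comparison.
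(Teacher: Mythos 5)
You are attempting to prove Conjecture~\ref{nil_nonnil}, which the paper explicitly leaves \emph{open}: the authors pose it as a hoped-for relaxation of hypothesis (a) of Theorem~\ref{nonnil} and offer no proof, so there is no argument in the paper to compare against. Judged on its own terms, your proposal is a correct diagnosis rather than a proof. The reduction to the Hitchin coefficients, the expansion $\mathrm{tr}(\theta_t^2)=tP+\bar tQ+O(|t|^2)$ with $P=-\mathrm{tr}(\theta\,\Ch^{1,0}g)$ and $Q=\mathrm{tr}\bigl(\theta(2\bar\eta(\theta^{\star_{h_0}})-\Ch^{1,0}\bg)\bigr)$, and the observation that the Bochner/K\"ahler-identity argument of Theorem~\ref{nonnil} establishing $Q\not\equiv0$ uses only the PDE~\eqref{equ_g}, $\bg=g^{\star_{h_0}}$, and the nonvanishing of $[(0,\eta(\theta))]\in\Hy$ --- never the Hodge grading --- all check out. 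You have also correctly located exactly where the grading enters the paper's proof: Lemma~\ref{g} and the block-diagonality of $h_0$ force every term of the $t$-coefficient to shift the grading by $-1$ or $-2$, killing $P$ identically, and this is the step with no analogue for a general nilpotent Higgs bundle.

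The genuine gap is the one you name yourself: nothing controls the transcendental holomorphic term $P$, so the first-order term $tP+\bar tQ$ could a priori vanish along a real ray where $e^{i\alpha}P=-e^{-i\alpha}Q$, and neither the horizontality heuristic nor a second-order expansion is carried out. Two remarks on sharpening what you do have. First, since $Q\not\equiv0$, the identity $tP+\bar tQ\equiv0$ for all $t$ in a complex disc is impossible, and the resonance $P=-e^{-2i\alpha}Q$ between the two holomorphic quadratic differentials $P$ and $Q$ can occur for at most one value of $e^{2i\alpha}$; so your argument already proves that $\sigma(t)$ is non-nilpotent for all small $t\neq0$ off at most one real ray through the origin of the disc --- a statement worth recording, and stronger than what you claim for it. Second, the "horizontality" remark after Theorem~\ref{thm-C} concerns the image of $\Phi^{0,1}_{KS}$, i.e.\ the anti-holomorphic displacement $(Q_j)_j$; it gives no handle on $(P_j)_j$, so passing to higher rank does not obviously break the resonance either. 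As submitted, the proposal does not prove the conjecture, and it should be presented as partial progress (the grading-free proof that $Q\not\equiv0$, plus the off-one-ray conclusion), not as a proof.
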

\begin{remark} By Example \ref{Hiteg}, if $\rho_{KS}$ is injective, Theorem \ref{nonnil} applies to the isomonodromic deformation of Hitchin's uniformization Higgs bundle $(E:=K_{X_0}^{\cc}\oplus K_{X_0}^{-\cc},\theta=\begin{pmatrix}0&1 \\0 & 0\\\end{pmatrix})$ over $X_0$. In this case, its (infinitesimal) isomonodromically deformed Higgs bundles are non-nilpotent. Using \cite[Corollary 4.23]{Hit}, one can prove if $\rho_{KS}$ is injective, then any (not just infinitesimal) isomonodromic deformation of Hitchin's uniformization Higgs bundle is not nilpotent.
\end{remark}
Motivated by the Hitchin's uniformization example, we ask the {\bf distribution} of nilpotent Higgs bundles when doing the isomonodromic deformation:
\begin{conjecture}\label{disconj}Let $\mathcal T_g$ be the Teichm\"uller space. We say the isomonodromic section $\sigma:\mathcal T_g\to \mathcal M_{\mathrm{Dol}}(\mathcal X/\mathcal T_g)$ has nilpotent loci $\mathcal N\subset \mathcal T_g$ if $\mathcal N$ is a union of all maximal irreducible real analytic subvarieties $S_i\subset \mathcal T_g,\ i\in I$, such that $\sigma|_{S_i}$ is always nilpotent. We expect that $\{S_i:i\in I\}$ is a {\bf discrete} set of subvarieties.
\end{conjecture}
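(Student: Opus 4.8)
The plan is to realize the nilpotent locus $\mathcal N$ as the zero set of a real analytic section of a holomorphic vector bundle over $\mathcal T_g$, and then to invoke the local finiteness of the irreducible components of a real analytic set. First I would set up the relative Hitchin map. For the $\SL$ case, over each $X_s$ the characteristic coefficients of the Higgs field give
$$h_s:\mathcal M_{\mathrm{Dol}}(X_s)\to\mathcal B_s:=\bigoplus_{k=2}^{n}H^0(X_s,K_{X_s}^{\otimes k}),\qquad (E,\bar\p,\theta)\mapsto(\mathrm{tr}(\theta^2),\dots,\mathrm{tr}(\theta^n)).$$
Since $g\geq 2$, each $\dim_{\mathbb{C}}H^0(X_s,K_{X_s}^{\otimes k})=(2k-1)(g-1)$ is independent of $s$ by Riemann-Roch, so the $\mathcal B_s$ assemble into a holomorphic vector bundle $\mathcal B=\bigoplus_{k=2}^{n}f_\ast(K_{\mathcal X/\mathcal T_g}^{\otimes k})\to\mathcal T_g$ and the $h_s$ into a relative Hitchin map $h:\mathcal M_{\mathrm{Dol}}(\mathcal X/\mathcal T_g)\to\mathcal B$ that is holomorphic along the fibers. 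A Higgs field is nilpotent exactly when all these coefficients vanish, so
$$\mathcal N=\{s\in\mathcal T_g:\ h(\sigma(s))=0\}=(h\circ\sigma)^{-1}(\text{zero section}).$$

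Next I would argue that $h\circ\sigma:\mathcal T_g\to\mathcal B$ is real analytic: $\sigma=\Psi\circ\tau$ is real analytic because $\Psi$ is real analytic and $\tau$ is holomorphic, and composing with the fiberwise-holomorphic $h$ preserves real analyticity. Hence $\mathcal N$ is a real analytic subset of the finite-dimensional ($\dim_{\mathbb{C}}\mathcal T_g=3g-3$) real analytic manifold $\mathcal T_g$. The core step is then the structure theory of real analytic sets: a real analytic subset of a real analytic manifold admits a locally finite decomposition into irreducible components (\L{}ojasiewicz; Bruhat-Whitney). The maximal irreducible real analytic subvarieties $S_i$ on which $\sigma$ is everywhere nilpotent are precisely these components, since any irreducible real analytic $S$ with $\sigma|_S$ nilpotent satisfies $S\subseteq\mathcal N$, and maximality forces $S$ to be a component. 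Local finiteness of the components is exactly the asserted discreteness: around each $s\in\mathcal T_g$ only finitely many $S_i$ meet a small neighborhood.

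To control the dimension of the components and rule out degenerate behavior, I would bring in the infinitesimal rigidity supplied by the non-abelian Kodaira-Spencer map. At a nilpotent point $\sigma(s_0)$, any first-order direction $\frac{\p}{\p t}\in T_{s_0}\mathcal T_g$ along which $\sigma$ stays nilpotent must satisfy $\Phi^{0,1}_{KS}=0$: by Theorem~\ref{nonnil} in the graded case, and conjecturally by Conjecture~\ref{nil_nonnil} in general, a nonzero $\Phi^{0,1}_{KS}$ forces $\mathrm{tr}(\theta_t^2)\not\equiv0$ and hence immediate non-nilpotency. Therefore the Zariski tangent space to $\mathcal N$ at $s_0$ is contained in $\ker\Phi^{0,1}_{KS}$, which bounds $\dim S_i$ and, in the generic situation where $\theta_\ast\circ\rho_{KS}$ is injective (as in Hitchin's uniformization Example~\ref{Hiteg}), forces $S_i$ to be isolated points — recovering the expected ``thinness'' of the nilpotent loci.

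The hard part, and the reason this is stated only as a conjecture, is verifying that $\mathcal N$ is a genuinely well-behaved closed real analytic subset to which the local finiteness machinery applies. Two difficulties must be handled. First, $\sigma$ and $h$ are a priori defined and real analytic only over the smooth locus of $\mathcal M_{\mathrm{Dol}}(\mathcal X/\mathcal T_g)$; one must analyze $\mathcal N$ where $\sigma$ meets the singular locus of the moduli (or extend $\sigma$ there) and check that the closure of $\mathcal N$ remains real analytic, so that the decomposition into locally finitely many irreducible components is legitimate globally over the non-compact $\mathcal T_g$. Second, making the tangent-space bound rigorous in the genuinely nilpotent (non-graded) range relies on Conjecture~\ref{nil_nonnil}, which is itself open. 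This dependence on the nilpotent strengthening of Theorem~\ref{nonnil}, together with the real analytic regularity of $\mathcal N$ across the bad locus, is the principal obstacle to converting the outline above into a complete proof.
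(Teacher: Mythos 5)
First, a point of order: the paper gives no proof of this statement --- Conjecture~\ref{disconj} is left open --- so there is no argument of the authors to compare yours against, and your proposal can only be assessed on its own terms. On those terms the reduction is sensible: writing $\mathcal N=(h\circ\sigma)^{-1}(\text{zero section})$ for the relative Hitchin map $h$ into $\mathcal B=\bigoplus_{k=2}^{n}f_\ast\bigl((\Omega^1_{\mathcal X/\mathcal T_g})^{\otimes k}\bigr)$, and observing that if $\mathcal N$ is a genuine closed real analytic (indeed C-analytic) subset of $\mathcal T_g$, then the $S_i$ are its Whitney--Bruhat irreducible components and local finiteness of those components is exactly the asserted discreteness. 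But this is an outline, not a proof, and you have yourself located the step on which everything hinges: the real analyticity of $s\mapsto h(\sigma(s))$ on \emph{all} of $\mathcal T_g$. The Hitchin--Simpson correspondence is only asserted to be real analytic at smooth points of the moduli, and the nilpotent locus is precisely where $\sigma$ is liable to pass through strictly polystable, hence singular, points. Without analyticity there, the set $\{s:\sigma(s)\ \text{nilpotent}\}$ could a priori fail to be analytic, in which case the collection of maximal irreducible analytic subvarieties it contains need not be locally finite (isolated points of a non-analytic closed set can accumulate). The natural repair --- noting that along an isomonodromic leaf the monodromy representation is literally constant and semisimple, so one should prove directly that the harmonic metric for a fixed representation, decomposed into irreducibles, varies real analytically with the conformal structure, and hence so do $\theta_s$ and $\mathrm{tr}(\theta_s^k)$ --- is asserted rather than proved in your write-up, and it is the actual mathematical content of the conjecture.

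Two smaller points. Your third paragraph (bounding tangent spaces of $\mathcal N$ by $\ker\Phi^{0,1}_{KS}$ via Theorem~\ref{nonnil} and Conjecture~\ref{nil_nonnil}) is not needed for discreteness --- local finiteness of components requires no dimension bound --- and it cannot be made rigorous anyway, since Theorem~\ref{nonnil} only covers graded initial data while Conjecture~\ref{nil_nonnil} is open; presenting it as part of the proof weakens rather than strengthens the argument. Second, the step ``maximality forces $S$ to be a component'' silently uses that an irreducible real analytic subvariety contained in a locally finite union of analytic sets lies in a single one of them; this is routine for complex analytic sets but in the real category requires the C-analytic/complexification formalism (irreducibility of real analytic sets is delicate), and should be justified or cited explicitly.
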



\end{document}